\def\NZQ{\mathbb}
\def\NN{{\NZQ N}} 
\def\ZZ{{\NZQ Z}}
\def\RR{{\NZQ R}}
\def\opn#1#2{\def#1{\operatorname{#2}}} 
\opn\chara{char} \opn\length{\ell} \opn\pd{pd} \opn\rk{rk}
\opn\projdim{proj\,dim} \opn\injdim{inj\,dim} \opn\rank{rank}
\opn\depth{depth} \opn\codepth{codepth} \opn\grade{grade}
\opn\height{height} \opn\embdim{emb\,dim} \opn\codim{codim}
\opn\Tr{Tr} \opn\bigrank{big\,rank}
\opn\superheight{superheight}\opn\lcm{lcm}
\opn\trdeg{tr\,deg}%
\opn\reg{reg} \opn\lreg{lreg} \opn\skel{skel} \opn\Gr{Gr}
\opn\dim{dim} \opn\indeg{indeg} \opn\Ass{Ass} \opn\Min{Min}
\opn\div{div} \opn\Div{Div} \opn\cl{cl} \opn\Cl{Cl}
\opn\Spec{Spec} \opn\Supp{Supp} \opn\supp{supp} \opn\Sing{Sing}
\opn\Ass{Ass}
\opn\Ann{Ann} \opn\Rad{Rad} \opn\Soc{Soc}
\opn\Sym{Sym} \opn\Ker{Ker} \opn\Coker{Coker} \opn\Im{Im}
\opn\Hom{Hom} \opn\Tor{Tor} \opn\Ext{Ext} \opn\End{End}
\opn\Aut{Aut} \opn\id{id} \opn\ini{in} \opn\tr{tr}
\def\core{core}
\opn\nat{nat}\opn\it{it}
\opn\pff{proof}
\opn\Pf{proof} \opn\GL{GL} \opn\SL{SL} \opn\mod{mod} \opn\ord{ord}
\opn\diam{diam}
\opn\dist{dist}
\opn\aff{aff} \opn\con{conv} \opn\relint{relint} \opn\st{st}
\opn\lk{lk} \opn\cn{cn} \opn\core{core} \opn\vol{vol}
\opn\link{link} \opn\star{star} \opn\skel{skel}
\opn\gr{gr}
\def\pot#1#2{#1[\kern-0.28ex[#2]\kern-0.28ex]}
\opn\dirlim{\underrightarrow{\lim}}
\opn\inivlim{\underleftarrow{\lim}}
\def\Implies{\ifmmode\Longrightarrow \else
     \unskip${}\Longrightarrow{}$\ignorespaces\fi}
\def\implies{\ifmmode\Rightarrow \else
     \unskip${}\Rightarrow{}$\ignorespaces\fi}
\def\iff{\ifmmode\Longleftrightarrow \else
     \unskip${}\Longleftrightarrow{}$\ignorespaces\fi}
\theoremstyle{plain}
\newtheorem{thm}{Theorem}[section]
\newtheorem{lemma}[thm]{Lemma}
\newtheorem{prop}[thm]{Proposition}
\newtheorem{cor}[thm]{Corollary}
\newtheorem{quest}[thm]{Question}
\newtheorem*{thm-q}{Theorem}
\newtheorem*{cor-q}{Corollary}
\newtheorem*{quest-q}{Question}
\newtheorem*{quests-q}{Questions}
\theoremstyle{definition}
\newtheorem{exam}[thm]{Example}
\newtheorem*{acknowledgement}{Ackowledgement}
\theoremstyle{remark}
\let\epsilon\varepsilon
\let\phi=\varphi
\let\kappa=\varkappa
\opn\Gin{Gin}
\opn\inii{in} \opn\inim{inm} \opn\rate{rate}
\numberwithin{equation}{section}
\begin{document}
\title[The v-numbers of Stanley--Reisner ideals]{The v-numbers of Stanley--Reisner ideals from the viewpoint of Alexander dual complexes}
\author[T. Kataoka]{Tatsuya Kataoka}
\address{(T. Kataoka) Department of Mathematics, Okayama University, 3-1-1 Tsushima-naka, Kita-ku, Okayama 700-8530, Japan.}
\email{pr2n839d@s.okayama-u.ac.jp}
\author[Y. Muta]{Yuji Muta}
\address{(Y. Muta) Department of Mathematics, Okayama University, 3-1-1 Tsushima-naka, Kita-ku, Okayama 700-8530, Japan.}
\email{p8w80ole@s.okayama-u.ac.jp}
\author[N. Terai]{Naoki Terai}
\address{(N. Terai) Department of Mathematics, Okayama University, 3-1-1 Tsushima-naka, Kita-ku, Okayama 700-8530, Japan.}
\email{terai@okayama-u.ac.jp}
\subjclass[2020]{Primary 13F55, Secondary 13H10}
\keywords{v-number, Stanley--Reisner ideal, Alexander dual, 2-Cohen--Macaulay, Gorenstein, Matroid,  Very-well covered graph, Multi-whisker graph, Symbolic power, Takayama's formula, Serre-depth}
\dedicatory {}

\begin{abstract}
 We express the v-number of the Stanley--Reisner ideal   in terms of its Alexander dual complex
and prove that  the v-number of a cover ideal  is just two less than the initial degree of its  syzygy module. We give   some relation between the  v-number of the Stanley--Reisner ideal  and  the Serre- depth of the  quotient ring of the second symbolic power of the Stanley--Reisner ideal of   its Alexander dual. We also show that  the v-number of  the Stanley--Reisner ideal of a 2-pure  simplicial complex is equal to the dimension of its  Stanley--Reisner ring.
\end{abstract}

\maketitle

\section{Introduction}

Let $k$ be a fixed field, $S=k[x_1,\ldots,x_n]$ be a polynomial ring with $\deg x_i= 1 $ for all $i \in [n]=\{1,2,\ldots,n\}$ and let $I$ be a homogeneous ideal of $S$.
The \textit{$\mbox{\rm v}$-number} of $I$ with respect to an associated prime $P$ of $I$ is defined to be $$\mbox{\rm v}_P( I)=\min \{ \deg f \,:\, (I:f) =P\}.$$We set $$\mbox{\rm v}( I)=\min \{ \mbox{\rm v}_P( I) \,:\, P \in \Ass S/I \}$$and$$\mbox{\rm v}_i( I)=\min \{ \mbox{\rm v}_P( I) \,:\, P \in \Ass S/I, {\rm ht} P=i\}$$for $i\ge 0$, where $\mathrm{Ass}(S/I)$ denotes the set of associated prime ideals of $I$ in $S$. For convenience, we set $\mbox{\rm v}_i( I)=\infty$, if there is no associated prime of $I$ with height $i$.

The  v-number was first introduced in the context with coding theory in \cite{CSTPV}.
In the field of combinatorial commutative algebra, the case that $I$ is a squarefree monomial ideal is mainly treated and the relation with the Castelnuovo--Mumford regularity has been considered. See \cite{C, JV, SS, S}. In this paper, we consider the v-number of a Stanley--Reisner ring  from the viewpoint of the Alexander dual complex.

After we introduce necessary notation in Section 2, we discuss Takayama's formula  \cite[Theorem 1]{Ta} in Section 3. Takayama's formula  is used to analyze the local cohomology modules of  the residue ring by a monomial ideal. To investigate the local cohomology modules of the residue rings by the symbolic powers of Stanley--Reisner ideals, we reformulate Takayama's formula (Theorem \ref{Takayama}). 

In Section 4, we express the v-number of the Stanley--Reisner ideal of a simplicial complex  in terms of its Alexander dual complex: 
\begin{thm}[\mbox{\rm see Theorem \ref{intersection}}]
Let $\Delta$ be a simplicial complex on the vertex set $ V=\{x_1, \dots, x_n\}$. Assume $P=(x_1, x_2, \dots,  x_h)$ be an associated prime of  $I_{\Delta}$. Then $$\mbox{\rm v}_P( I_{\Delta})=n-h -\max \{ |\cap_{j=1}^{h} F_j | \,:\, F_j \in \mathcal{F}(\Delta^{*}) \mbox{ such that }V|_{[h]\setminus{\{i\}}}\subset F_j \mbox{ for all } j \},$$where $V|_{[h]\setminus{\{i\}}}=\{x_{1},\ldots, \hat{x_{i}}, \ldots, x_{h}\}$ and the hat $\hat{x}_i$ indicates that the element $x_i$ is removed.
\end{thm}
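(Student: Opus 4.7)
The plan is to translate $\mbox{\rm v}_P(I_\Delta)$ into a combinatorial minimization problem on $\Delta$ and then pass to $\Delta^{*}$ via complementation.

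First, by a standard reduction for squarefree monomial ideals, the minimum in the definition of $\mbox{\rm v}_P(I_\Delta)$ is attained by a squarefree monomial $f=x^{\sigma}$ with $\sigma\in\Delta$: any polynomial witness $f=\sum c_{\alpha}m_{\alpha}$ gives $(I_\Delta:f)=\bigcap_{\alpha}(I_\Delta:m_{\alpha})$, and since $P$ is prime some $(I_\Delta:m_{\alpha})$ already equals $P$; then $(I_\Delta:x^{\alpha})=(I_\Delta:x^{\mathrm{supp}(\alpha)})$ lets one replace $m_{\alpha}$ by its squarefree part. The task therefore becomes minimizing $|\sigma|$ subject to $(I_\Delta:x^{\sigma})=P$. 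Unpacking this, $P\subseteq(I_\Delta:x^{\sigma})$ is equivalent to $\sigma\cup\{i\}\notin\Delta$ for every $i\in[h]$, while $(I_\Delta:x^{\sigma})\subseteq P$, using that this colon ideal is generated by the monomials $x^{\tau}$ with $\tau\cup\sigma\notin\Delta$, is equivalent to $\tau\cup\sigma\in\Delta$ for every $\tau\subseteq[n]\setminus[h]$. Since $P$ is associated to $I_\Delta$, the set $[n]\setminus[h]$ is a facet of $\Delta$, so taking $\tau=[n]\setminus[h]$ forces $\sigma\subseteq[n]\setminus[h]$.

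Now I dualize by setting $\sigma':=[n]\setminus\sigma$. The two constraints then become $[h]\subseteq\sigma'$ and $\sigma'\setminus\{i\}\in\Delta^{*}$ for each $i\in[h]$, and minimizing $|\sigma|$ becomes maximizing $|\sigma'|$. Starting from a valid $\sigma'$, one picks, for each $i\in[h]$, a facet $F_{i}\in\mathcal{F}(\Delta^{*})$ with $\sigma'\setminus\{i\}\subseteq F_{i}$, so that automatically $V|_{[h]\setminus\{i\}}\subseteq F_{i}$. Conversely, given any admissible tuple $(F_{1},\ldots,F_{h})$ of facets of $\Delta^{*}$ with $V|_{[h]\setminus\{i\}}\subseteq F_{i}$, the candidate $\sigma':=\bigcap_{i=1}^{h}(F_{i}\cup\{i\})$ satisfies both constraints (each $\sigma'\setminus\{i\}$ lies inside $F_{i}$, hence in $\Delta^{*}$) and dominates every $\sigma'$ compatible with that tuple. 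Thus $\mbox{\rm v}_P(I_\Delta)=n-\max\bigl|\bigcap_{i=1}^{h}(F_{i}\cup\{i\})\bigr|$ as $(F_{i})$ ranges over admissible tuples.

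It remains to establish the identity
\[
\Bigl|\bigcap_{i=1}^{h}(F_{i}\cup\{i\})\Bigr|=h+\Bigl|\bigcap_{i=1}^{h}F_{i}\Bigr|.
\]
The key observation is that $[h]$ is a minimal non-face of $\Delta^{*}$, since its complement $[n]\setminus[h]$ is a facet of $\Delta$. Consequently $i\notin F_{i}$ for every $i\in[h]$, for otherwise $[h]\subseteq F_{i}$ would contradict $F_{i}\in\Delta^{*}$. Hence $F_{i}\cap[h]=[h]\setminus\{i\}$, so $\bigl(\bigcap_{i}F_{i}\bigr)\cap[h]=\emptyset$, and splitting $\bigcap_{i}(F_{i}\cup\{i\})$ according to whether an element lies in $[h]$ yields the displayed identity. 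Substituting back gives the theorem. I expect the main obstacle to be bookkeeping: keeping the several complementations and indexing conventions straight, and carefully verifying that the candidate $\sigma'=\bigcap_{i}(F_{i}\cup\{i\})$ built from a given tuple actually attains the maximum rather than merely bounding it.
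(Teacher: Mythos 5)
Your argument follows essentially the same route as the paper: reduce to a squarefree monomial witness $x^\sigma$, translate $(I_\Delta:x^\sigma)=P$ into two combinatorial conditions on $\sigma$, and pass to the Alexander dual by complementation. Your $\sigma'$ is exactly the set $F=V\setminus C$ appearing in the paper's proof of Theorem \ref{intersection} via Lemma \ref{cha_lem}, and your final counting identity is the paper's observation that $F\supseteq\{x_1,\dots,x_h\}$ and $|F\setminus\{x_1,\dots,x_h\}|=|\bigcap_{j}F_j|$; you are in fact a bit more careful than the paper in passing between the faces $\sigma'\setminus\{i\}$ and genuine facets $F_i\in\mathcal{F}(\Delta^{*})$. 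The one slip is the claimed identity $(I_\Delta:f)=\bigcap_\alpha(I_\Delta:m_\alpha)$: only the inclusion $\supseteq$ holds in general (for $I=(x^2,y^2)$ and $f=x+y$ one has $x-y\in(I:f)$ but $x-y\notin(I:x)$). The reduction you need is still true: from $x_if\in I_\Delta$ one gets $x_im_\alpha\in I_\Delta$ for every monomial $m_\alpha$ in the support of $f$ (no cancellation occurs when multiplying by a single monomial), hence $P\subseteq(I_\Delta:m_\alpha)$ for all $\alpha$, while $\bigcap_\alpha(I_\Delta:m_\alpha)\subseteq(I_\Delta:f)=P$ and the primeness of $P$ then force $(I_\Delta:m_\alpha)=P$ for some $\alpha$. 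This is precisely \cite[Lemma 3.4(c)]{JV}, which the paper simply cites, so the gap is cosmetic rather than structural.
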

This result leads us to a useful formula for cover ideals of finite simple graphs (Corollary \ref{cover ideals}). 

In Section 5, we consider {\rm v}-numbers of Stanley--Reisner ideals with pure height 2 by associating the {\rm v}-number with graded Betti numbers of the minimal free resolution of Stanley--Reisner ideals. In particular, we prove the following:
\begin{thm}[\mbox{\rm see Corollary \ref{pureheight2}}]
Let $\Delta$ be a pure simplicial complex such that ${\rm ht} I_{\Delta}=2$. Then $\mbox{\rm v}( I_{\Delta})=\min \{j \,\,: \beta_{2,2+j}(S/ I_{\Delta})\ne 0\}$.
\end{thm}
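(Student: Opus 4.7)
The strategy is to observe that every pure height-$2$ Stanley--Reisner ideal is the cover ideal of some graph, then appeal to the cover-ideal result of Section~4 and translate the conclusion into graded Betti numbers.

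Since $\Delta$ is pure with $\height I_\Delta = 2$, every facet of $\Delta$ has $n-2$ vertices, and hence every associated prime of $I_\Delta$ has the form $(x_i,x_j)$ for some pair $\{x_i,x_j\}$ with $V\setminus\{x_i,x_j\}\in\mathcal F(\Delta)$. Let $G$ be the finite simple graph on $V$ whose edges are exactly these pairs. The standard primary decomposition of $I_\Delta$ then becomes
\[
I_\Delta \;=\; \bigcap_{\{x_i,x_j\}\in E(G)} (x_i,x_j) \;=\; J(G),
\]
exhibiting $I_\Delta$ as the cover ideal of $G$. With this identification, I would invoke Corollary~\ref{cover ideals} to obtain $\mbox{\rm v}(I_\Delta) = \indeg(\text{Syz}_1(I_\Delta))-2$.

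It remains to reinterpret the right-hand side in terms of graded Betti numbers. Since $\text{Syz}_1(I_\Delta)$ sits as the first syzygy module of $I_\Delta$ (equivalently, the second syzygy module of $S/I_\Delta$), one has $\beta_{0,d}(\text{Syz}_1(I_\Delta)) = \beta_{1,d}(I_\Delta) = \beta_{2,d}(S/I_\Delta)$, whence $\indeg(\text{Syz}_1(I_\Delta)) = \min\{d : \beta_{2,d}(S/I_\Delta)\neq 0\}$. Substituting $d = 2+j$ yields
\[
\mbox{\rm v}(I_\Delta) \;=\; \indeg(\text{Syz}_1(I_\Delta)) - 2 \;=\; \min\{j : \beta_{2,2+j}(S/I_\Delta)\neq 0\},
\]
as required.

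The heavy lifting here sits in Corollary~\ref{cover ideals} itself, whose proof presumably combines Theorem~\ref{intersection} with information about the minimal free resolution of a cover ideal; the present argument merely reduces the pure height-$2$ case to that corollary via the decomposition above. If one prefers a direct proof bypassing Corollary~\ref{cover ideals}, one would instead use Theorem~\ref{intersection} to write, for $P=(x_i,x_j)$ associated, $\mbox{\rm v}_P(I_\Delta) = \min\{|M_1\cup M_2|-2 : M_1,M_2 \text{ minimal non-faces of }\Delta,\ x_j\notin M_1,\ x_i\notin M_2\}$, and compare this with Hochster's formula $\beta_{2,2+j}(S/I_\Delta)=\sum_{|W|=2+j}\dim_k\tilde H_{j-1}(\Delta_W;k)$, matching qualifying pairs $(M_1,M_2)$ with subsets $W=M_1\cup M_2$; the combinatorial subtlety of ensuring that additional minimal non-faces inside $W$ do not kill the expected homology class would be the main obstacle on that route.
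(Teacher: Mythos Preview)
Your argument is circular. Corollary~\ref{cover ideals} does not assert that $\mathrm{v}(J(G))=\indeg(\mathrm{Syz}_1(J(G)))-2$; it is the purely combinatorial statement
\[
\mathrm{v}(J(G))=|V(G)|-\max\{|F\cap F'|\,:\,F,F'\in\mathcal F(\Delta(G)),\ F\neq F'\}-2,
\]
obtained as an immediate specialization of Theorem~\ref{intersection} to the height-$2$ case. No resolution data enters its proof. The sentence in the introduction about the v-number being ``two less than the initial degree of the syzygy module'' is the authors' gloss on Corollary~\ref{pureheight2} itself, not on Corollary~\ref{cover ideals}. So when you invoke Corollary~\ref{cover ideals} for the syzygy statement, you are in effect assuming the very result you are asked to prove.

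What actually has to be shown is the bridge between the combinatorial quantity $n-\max|F\cap F'|-2$ and the Betti numbers $\beta_{2,2+j}(S/I_\Delta)$. The paper does this in two halves. For the inequality $\mathrm{v}(I_\Delta)\ge\min\{j:\beta_{2,2+j}\ne 0\}$ it uses Theorem~\ref{2-nd betti}: given facets $F,F'$ of $\Delta^*$ realizing the maximum intersection $G=F\cap F'$, one must check that $\tilde H_0(\link_{\Delta^*}G;k)\ne 0$, which via local Alexander duality (Theorem~\ref{LAD}) and Hochster's formula yields $\beta_{2,2+\mathrm v(I_\Delta)}\ne 0$. The nontrivial point---exactly the ``combinatorial subtlety'' you flag at the end---is that the two vertices $x_i\in F\setminus F'$ and $x_j\in F'\setminus F$ really lie in different components of the link; the paper proves this by contradiction, showing that a connecting path would force a strictly smaller $\mathrm v_Q(I_\Delta)$ for some other associated prime $Q$. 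For the reverse inequality, the paper argues that if $|G|>n-\mathrm v(I_\Delta)-2$ then $\link_{\Delta^*}G$ is a simplex (using that $\Delta^*$ is flag), so $\beta_{2,|V\setminus G|}=0$. Your proposal contains neither of these arguments.
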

\noindent It means that  the v-number of the cover ideal of a graph is just two less than the initial degree of the its  syzygy module, which is a refined version of a result of  Saha \cite[Theorem 3.8]{S} that the v-number of the cover ideal of a graph is bounded above by the regularity of its quotient ring. Moreover, we give formulas for {\rm v}-numbers of cover ideals of multi-whisker graphs (Corollary \ref{multi-whisker}) and very well-covered graphs (Theorem \ref{very well-covered}). In the rest of this section, we discuss the result by Saha and Sengupta \cite[Proposition 4.1]{SS}. This result had a counter-example, so we revise their claim using the notion of nerve complexes (Proposition \ref{line graphs}).

In Section 6, we discuss a relationship between the v-number of the Stanley--Reisner ideal of a simplicial complex and the Serre-depth, which is introduced in \cite{PPTY1, PPTY2}, for the quotient ring of the second symbolic power of the Stanley--Reisner ideal of its Alexander dual: 
\begin{thm}[\mbox{\rm see Theorem \ref{S$_h$-depth}}]
Let $\Delta$ be a pure simplicial complex such that ${\rm ht} I_{\Delta}=h\geq 2$.
Then $$\mbox{\rm v}_P( I_{\Delta})\le n- \mbox{\rm S$_h$-depth}\hspace{0.05cm}S/I_{\Delta^*}^{(2)}-1
\mbox{ \quad for any }P\in \Ass S/I_{\Delta}.$$
\end{thm}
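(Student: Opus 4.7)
The plan is to combine the combinatorial expression of the v-number from Theorem \ref{intersection} with the reformulated Takayama formula from Theorem \ref{Takayama} to exhibit a witness for the failure of the Serre condition on $S/I_{\Delta^*}^{(2)}$ at the prescribed level. By Theorem \ref{intersection}, write $\mbox{\rm v}_P(I_\Delta) = n - h - m$, where $m$ is the maximum, over all admissible $h$-tuples $(F_1,\ldots,F_h)$ of facets of $\Delta^*$ with $V|_{[h]\setminus\{j\}} \subseteq F_j$, of $|F_1 \cap \cdots \cap F_h|$. Modulo this reformulation the claim reduces to proving
$$\mbox{\rm S$_h$-depth}\, S/I_{\Delta^*}^{(2)} \;\le\; h + m - 1.$$

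Fix a collection $F_1,\ldots,F_h$ attaining this maximum and set $G = F_1 \cap \cdots \cap F_h$, so $|G|=m$. The strategy is to construct an explicit $\ZZ^n$-multidegree $\alpha$ whose negative coordinates are concentrated on $[h]$ (encoding the prime $P=(x_1,\ldots,x_h)$ of $I_\Delta$ and the second-power structure of $I_{\Delta^*}^{(2)}$) and which vanishes on the vertices of $G$. Feeding this $\alpha$ into the reformulated Takayama formula (Theorem \ref{Takayama}) for $I_{\Delta^*}^{(2)}$ should produce a simplicial complex $\Delta_\alpha$ whose reduced homology is controlled by the nerve of $\{F_j \cap [h]\}_{j=1}^h$; by construction each $F_j \cap [h]$ covers $[h]\setminus\{j\}$, so this nerve is exactly the boundary of the $(h-1)$-simplex on $[h]$, which carries nonzero reduced homology in degree $h-2$. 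Combined with the $m$ common vertices of $G$ — each of which contributes a join factor — this yields a nonzero class in $H^{h+m-1}_{\frm}(S/I_{\Delta^*}^{(2)})_\alpha$ whose support lies in the prime $\frp_G = (x_i : i \notin G)$, and $\dim S/\frp_G = m$.

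By the definition of $\mbox{\rm S$_h$-depth}$ from \cite{PPTY1, PPTY2}, the existence of a nonzero local cohomology class at cohomological degree $h+m-1$ whose support is a prime of dimension $m$ (at least $h$, since $m \geq 1$ whenever the associated prime $P$ exists and $h \geq 2$ forces $G$ to be nonempty generically — a point to verify separately in edge cases) forces $\mbox{\rm S$_h$-depth}\, S/I_{\Delta^*}^{(2)} \le h+m-1$, giving the claimed inequality. The main obstacle is the passage from $I_{\Delta^*}$ to $I_{\Delta^*}^{(2)}$: Takayama's formula applied to the symbolic square has an extra layer of combinatorial data from the ``doubled'' generators, and one has to verify that, in the multidegree $\alpha$ chosen above, the resulting $\Delta_\alpha$ deformation-retracts onto the nerve of $\{F_j \cap [h]\}$ without collapse. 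Here the maximality of $m$ is crucial: any facet of $\Delta^*$ producing additional simplices in $\Delta_\alpha$ that would collapse the boundary-of-simplex structure would contradict the choice of the $F_j$ maximizing $|F_1 \cap \cdots \cap F_h|$.
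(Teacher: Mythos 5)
Your overall architecture is sound: you are running the contrapositive of the paper's argument. Where the paper assumes $D=\mbox{\rm S$_h$-depth}\hspace{0.05cm}S/I_{\Delta^*}^{(2)}$ is large and iteratively extracts vertices $z_1,z_2,\dots$ from the \emph{vanishing} of $\widetilde{H}_{h-2}$ of the Takayama complexes (via the contrapositive of Lemma \ref{nonvanish}), you fix a maximizing intersection $G=F_1\cap\dots\cap F_h$ with $|G|=m$ and try to exhibit \emph{non}vanishing of $\widetilde{H}_{h-2}(\Delta_{\boldmath{a}}(I^{(2)}_{\link_{\Delta^*}G});k)$ in one shot, which via Corollary \ref{dual} would give $\dim K^{h+m-1}_{S/I_{\Delta^*}^{(2)}}\ge m$ and hence $D\le h+m-1$, i.e.\ exactly the claimed inequality. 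This single application is, if anything, a cleaner packaging than the paper's iteration, and the numerology checks out (note also that the $m=0$ case is unproblematic, contrary to your worry: one then only needs $K^{h-1}\ne 0$).

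Two things, however, need repair. First, your multidegree is set up backwards: in Takayama's formula the face $G$ must be the \emph{negative} support (its coordinates range over all negative integers, which is what produces the factor $(t/(1-t))^{|G|}$ in Corollary \ref{dual} and hence the dimension bound $\dim K^{h+m-1}\ge m$), while the coordinates on $[h]$ must be $+1$ so that $x^{\boldmath{a}}=x_1\cdots x_h$ detects membership modulo the squared primes. A multidegree that is negative on $[h]$ and zero on $G$, as you describe, yields the void complex (since $[h]\notin\Delta^*$, no face of $\Delta^*$ contains $[h]$) and in any case loses the $m$-dimensional support. Second, and more seriously, the heart of the proof --- that $\Delta_{\boldmath{a}}(I^{(2)}_{\link_{\Delta^*}G})$ has nonzero $\widetilde{H}_{h-2}$ --- is only asserted via an unproved ``deformation retracts onto the nerve without collapse'' claim. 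That complex is generated by \emph{all} facets $F'\supset G$ of $\Delta^*$ containing some $[h]\setminus\{i\}$, not just your chosen $F_1,\dots,F_h$, and the nerve lemma does not apply off the shelf. The paper isolates exactly this step as Lemma \ref{nonvanish} (the boundary of the simplex on $[h]$ is present but the full simplex is not, no single vertex $z$ cones off the whole boundary, and every facet contains some $[h]\setminus\{i\}$), proved by induction on the vertex set with a Mayer--Vietoris argument; your maximality of $m$ supplies precisely the ``no coning vertex'' hypothesis, but without proving such a lemma --- and without the explicit computation of membership in $\Delta_{\boldmath{a}}(I^{(2)})$, which you flag as ``the main obstacle'' but do not carry out --- the proposal has a genuine gap at its central step.
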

In Section 7, we show that the v-number of  the Stanley--Reisner ideal of a 2-pure simplicial complex is equal to the dimension of its  Stanley--Reisner ring (Theorem \ref{w2}), which is a generalization of  \cite[Theorem 4.5]{JV} on the edge ideal case.
As its corollary, we determine  the v-number of  the Stanley--Reisner ideal of a 2-Cohen--Macaulay, Gorenstein or a matroid  complex (Corollary \ref{2-CM}).

In Section 8, we consider the range of the value of the v-number of Stanley--Reisner ideals and answer some questions posed by \cite{S, SS}.

In the final section of this paper, we show that the v-number of the edge ideal of a very well-covered graph or a multi-whisker graph is bounded above by the regularity of its quotient ring. For very well-covered graphs, this result is well known, as stated in \cite[Theorem 4.3]{GRV}. Here, we provide a new proof of this result using the technique from \cite[Theorem 3.5]{KPTY}.

\section{Preliminaries}
In this section, we give several definitions and basic properties of simplicial complexes and simple graphs. Readers familiar with this material may wish to continue to the next section. 

\subsection{Simplicial complexes}
We recall several notations on simplicial complexes and Stanley--Reisner ideals. We refer the reader to \cite{BH, St} for the detailed information on combinatorial and algebraic background. For a positive integer $n$, we set $V=[n]=\{1, 2, \ldots, n\}$. A nonempty subset $\Delta$ of the power set $2^V$ of $V$ is called a \textit{simplicial complex} on $V$, if $\{v\} \in \Delta$ for all $v \in V$, and $F \in \Delta$, $F^{\prime} \subseteq F$ imply $F^{\prime} \in \Delta$. An element $F \in \Delta$ is called a \textit{face} of $\Delta$. The \textit{dimension} of $\Delta$ is defined by $\dim \Delta = \max\{|F|-1\,\,: \text{$F$ is a face of $\Delta$}\}$. A maximal face of $\Delta$ is called a \textit{facet} of $\Delta$. If any facets of $\Delta$ have the same cardinality, then $\Delta$ is called \textit{pure}. We denote the set of facets by $\mathcal{F}(\Delta)$. A vertex $v$ of $\Delta$ is called a \textit{free vertex}, if $v$ is contained only one facet. If we write the vertex set as $V = \{ x_1, \ldots, x_n \}$, then for each $1 \leq i \leq h$, we define $V|_{[h] \setminus \{i\}} = \{ x_1, \ldots, \hat{x}_i, \ldots, x_h \},$ where the hat $\hat{x}_i$ indicates that the element $x_i$ is removed. For a face $F \in \Delta$, the \textit{link} and  \textit{star} of $F$ are defined by
\begin{eqnarray*}
\link_{\Delta} F  &=& \{F^{\prime} \in \Delta\;:\,F^{\prime} \cup F \in \Delta,\, F^{\prime} \cap F = \emptyset\},\\
\star_{\Delta} F  &=& \{F^{\prime} \in \Delta\;:\,F^{\prime} \cup F \in \Delta \}.
\end{eqnarray*}
Suppose $\Delta$ is connected with $\dim \Delta=1$ and let $p$, $q$ be two vertices. The \textit{distance} between $p$ and $q$, denoted by ${\rm dist}_{\Delta}(p, q)$, is the minimal length of paths from $p$ to $q$. The \textit{diameter}, denoted by $\diam \Delta$, is the maximal distance between two vertices in  $[n]$. We set $\diam  \Delta= \infty$, if $\Delta$ is  disconnected. For a $(d-1)$-dimensional case with $d\geq 3$, we define  $\diam \Delta = \diam \Delta ^{(1)}$, where $\Delta ^{(1)}$ is the 1-skeleton $\{F \in \Delta\;:\ \dim F \le 1\}$ of  $\Delta$, which is 1-dimensional. Now, we recall the notion of the Alexander dual complex. The \textit{Alexander dual complex} of $\Delta$ is defined  as follows: $$ \Delta^{*} = \{F \in 2^{V}\,:\, V \setminus F \notin \Delta\}.$$
Then $\Delta^{*}$ is a simplicial complex on the vertex set $V$ and $(\Delta^{*})^{*} = \Delta$. We have the useful tool about the simplicial homologies:  
\begin{thm}\cite[Proposition 1]{ER} \label{LAD}$($``local Alexander duality theorem"$)$ For every $W \subset V$ and every $i$, we have 
$$\widetilde{H}_{i-2}(\link _{\Delta}(V \setminus W);k) \cong \widetilde{H}_{|W|-i-1}((\Delta^{*})_W;k).$$
\end{thm}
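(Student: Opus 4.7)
The plan is to reduce the statement to classical Alexander duality via an explicit identification of the Alexander dual of a link. Recall that classical Alexander duality asserts that, for any simplicial complex $\Gamma$ on a vertex set of size $m$, one has $\widetilde{H}_j(\Gamma;k) \cong \widetilde{H}_{m-j-3}(\Gamma^*;k)$, where the dual is taken with respect to that ambient vertex set.

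First I would establish the combinatorial identity that the Alexander dual of $\link_\Delta(V\setminus W)$, computed with respect to the vertex set $W$, coincides with the restriction $(\Delta^*)_W$. The set $W$ is the natural ambient vertex set here, because every face of $\link_\Delta(V\setminus W)$ is disjoint from $V\setminus W$ (and hence contained in $W$). The identity is a short unwinding of definitions: for $F\subseteq W$, saying that $F$ belongs to the Alexander dual of $\link_\Delta(V\setminus W)$ on $W$ means $W\setminus F\notin\link_\Delta(V\setminus W)$, equivalently $(W\setminus F)\cup(V\setminus W)\notin\Delta$; and since $F\subseteq W$ the union equals $V\setminus F$, so the condition reads $V\setminus F\notin\Delta$, i.e.\ $F\in\Delta^*$. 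Combined with $F\subseteq W$ this is precisely $F\in(\Delta^*)_W$.

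With this identification in hand, I would apply classical Alexander duality to $\Gamma=\link_\Delta(V\setminus W)$ on the vertex set $W$ of size $m=|W|$, with index $j=i-2$, obtaining
\[
\widetilde{H}_{i-2}(\link_\Delta(V\setminus W);k)\cong \widetilde{H}_{|W|-(i-2)-3}\bigl((\Delta^*)_W;k\bigr)=\widetilde{H}_{|W|-i-1}\bigl((\Delta^*)_W;k\bigr),
\]
which is the asserted formula. The only real subtlety is choosing the correct ambient vertex set when dualizing the link; if one naively dualized inside the full vertex set $V$, the identity in the previous paragraph would break down, because the complement would acquire extra elements outside $W$ and no longer match $\Delta^*$. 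Once that conceptual point is handled, classical Alexander duality finishes the argument in a single line, so this choice of vertex set is where the main care is required.
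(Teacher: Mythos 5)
Your proposal is correct. The paper does not prove this statement at all --- it is quoted directly from Eagon--Reiner as \cite[Proposition 1]{ER} --- and your argument (identify the Alexander dual of $\link_\Delta(V\setminus W)$ taken within the ambient vertex set $W$ with $(\Delta^*)_W$, then apply combinatorial Alexander duality on $W$ with the index shift $j=i-2$) is exactly the standard derivation of that cited result, including the correct emphasis on dualizing inside $W$ rather than $V$; note also that your combinatorial identity remains valid in the degenerate case $V\setminus W\notin\Delta$, where the link is void and $(\Delta^*)_W$ is the full simplex on $W$, so both sides vanish.
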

Also, $\Delta$ is called a \textit{matroid complex}, if the induced subcomplex $\Delta_{W} = \{ F \in \Delta : F \subset W \}$ is pure for any $W \subset [n]$. If a $(d-1)$-dimensional matroid complex $\Delta$ satisfies $\widetilde{H}_{d-1}(\Delta ; k) \neq 0$, then $\Delta$ is called \textit{matroid*}. The \textit{Stanley--Reisner ideal} of $\Delta$, denoted by $I_{\Delta}$, is the squarefree monomial ideal generated by $$\{x_{i_1} x_{i_2} \cdots x_{i_p} \,:\, 1 \le i_1 < \cdots < i_p \le n,\; \{x_{i_1},\ldots,x_{i_p}\} \notin \Delta \},$$ and $k[\Delta]= k[x_1,\ldots,x_n]/I_{\Delta}$ is called the \textit{Stanley--Reisner ring} of $\Delta$. The \textit{initial degree}, denoted by $\indeg I_{\Delta}$, is the smallest degree of generators which belong to $\mathcal{G}(I_{\Delta})$, where $\mathcal{G}(I_{\Delta})$ is the set of minimal generators of $I_{\Delta}$. Notice that 
$$\indeg I_{\Delta}= \min \{ |F|\;:\ F \text{ is a (minimal) non-face of }\Delta \}.$$
The \textit{big hight} of $I_{\Delta}$, denoted by ${\rm bight}I_{\Delta}$, is the number ${\rm max} \{ {\rm ht}P : P \in {\rm Ass}I_{\Delta} \}$. Since a facet $F$ of $\Delta $ corresponds to the minimal non-face $V \setminus F$ of $\Delta^{*}$, we have :
\begin{prop}$($c.f. e.g.\cite[Proposition1.1]{T}$)$ \label{A-dualideal}
Let $$I_{\Delta}= \bigcap_{ F \in \mathcal{F} (\Delta )}P_{F}$$ 
be the minimal prime decomposition of $I_{\Delta}$, where $P_{F}$ is the monomial prime ideal $( x_j \,\,: j \in V \setminus F)$. Then we have $$I_{\Delta^{*}}=( x^{(V \setminus F)}\;:\ F \in \mathcal{F} (\Delta )),$$ where $x^{(V \setminus F)}$ is the monomial $\prod_{j \in V \setminus F} x_j$. In particular,  $\indeg I_{\Delta^{*}}={\rm ht} I_{\Delta}$. 
\end{prop}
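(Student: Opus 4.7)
The plan is to unravel the definitions on both sides and use that the map $F \mapsto V \setminus F$ is an order-reversing bijection on $2^{V}$, so that facets of $\Delta$ correspond to minimal non-faces of $\Delta^{*}$. The statement then splits into two parts: describing the generators of $I_{\Delta^{*}}$, and comparing the initial degree with the height.

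First, I would identify the minimal non-faces of $\Delta^{*}$. By definition, $F \in \Delta^{*}$ iff $V \setminus F \notin \Delta$, equivalently, $F$ is a non-face of $\Delta^{*}$ iff $V \setminus F \in \Delta$. A non-face $F$ of $\Delta^{*}$ is a \emph{minimal} non-face exactly when every proper subset $F' \subsetneq F$ lies in $\Delta^{*}$, i.e., $V \setminus F'$ is not in $\Delta$ for every $F' \subsetneq F$. Since $F' \subsetneq F$ is equivalent to $V \setminus F \subsetneq V \setminus F'$, this says precisely that $V \setminus F$ is a maximal element of $\Delta$, i.e., a facet. Hence the minimal non-faces of $\Delta^{*}$ are exactly the sets $V \setminus F$ with $F \in \mathcal{F}(\Delta)$. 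Because the Stanley--Reisner ideal of any simplicial complex is minimally generated by the monomials corresponding to its minimal non-faces, we obtain
\[
I_{\Delta^{*}} = \bigl( x^{(V \setminus F)} \,:\, F \in \mathcal{F}(\Delta) \bigr),
\]
as claimed.

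For the initial degree statement, it suffices to combine the description of the generators with the standard formula $\mathrm{ht}\, P_F = |V \setminus F|$ for the primes appearing in the minimal prime decomposition. Indeed,
\[
\indeg I_{\Delta^{*}} = \min \bigl\{ |V \setminus F| \,:\, F \in \mathcal{F}(\Delta) \bigr\} = \min \bigl\{ \mathrm{ht}\, P_F \,:\, F \in \mathcal{F}(\Delta) \bigr\} = \mathrm{ht}\, I_{\Delta},
\]
where the last equality uses that the height of an ideal is the minimum of the heights of its minimal primes.

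There is no real obstacle in this argument; it is a purely combinatorial verification. The one subtlety worth highlighting carefully is the order-reversing nature of complementation: this is what converts ``maximality in $\Delta$'' (the defining property of facets) into ``minimality of non-faces in $\Delta^{*}$'', and it is also what ensures that the hypothesis that the given decomposition is \emph{minimal} is genuinely used, so that no redundant facets slip into the generating set of $I_{\Delta^{*}}$.
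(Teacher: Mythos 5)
Your proof is correct and follows exactly the route the paper indicates: the paper offers no written proof beyond the remark that a facet $F$ of $\Delta$ corresponds to the minimal non-face $V \setminus F$ of $\Delta^{*}$ (citing \cite[Proposition 1.1]{T}), and your argument is precisely a careful write-up of that correspondence together with the standard identities $\mathrm{ht}\,P_F = |V\setminus F|$ and $\mathrm{ht}\,I_\Delta = \min_F \mathrm{ht}\,P_F$. No gaps.
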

We say that  a simplicial complex $\Delta$ is \textit{Cohen--Macaulay} (resp. \textit{Gorenstein}, \textit{level}), if $k[\Delta]$ is Cohen--Macaulay (resp. Gorenstein, level). Moreover, $\Delta$ is called \textit{2-Cohen--Macaulay} (resp. \textit{2-pure}), if  $\Delta$ is Cohen--Macaulay (resp. pure) and  $\Delta_{V\setminus \{x\}}$ is Cohen--Macaulay (resp. pure) with $\dim \Delta = \dim \Delta _{V\setminus \{x\}}$ for any $x\in V$. A Gorenstein complex $\Delta$ is said to be \textit { Gorenstein* }, if  $x_i$ is a zero-divisor on  $k[\Delta]$ for every $i$. 

\subsection{Graphs and Clutters}
Let $G$ be a graph, which means a finite simple graph, namely, a finite undirected graph without loops or multiple edges in this paper. Let $V(G)$ (resp. $E(G)$) denote the set of vertices (resp. edges) of $G$ and set $V(G)=\{x_{1},\ldots, x_{n}\}$. Then the \textit{edge ideal} of $G$, denoted by $I(G)$, is a squarefree monomial ideal of $S=K[x_1,\ldots,x_n]$ defined by $$I(G) = (x_ix_j \,:\, \{x_i,x_j\} \in E(G)).$$ A subset $A$ of $V(G)$ is an \textit{independent set} of $G$, if no two vertices contained $A$ are adjacent. An independent set is called \textit{maximal}, if it is maximal with respect to inclusion.  Also, we let $i(G)=\min\{|A|\,\,: A\mbox{ is a maximal independent set}\}$. The \textit{independence complex}  $\Delta(G)$ of $G$ is defined as the simplicial complex whose faces are the independent set of $G$, that is, $$\Delta(G)=\{F \subset V  \,:\, e \not\subseteq F \ \mbox{\rm for any } e \in E(G) \},$$ Notice that $I(G)=I_{\Delta(G)}$(see, for example, \cite[Lemma 31]{T13}). A subset $W$ of $V(G)$ is a \textit{vertex cover} of $G$, if for any edge $\{u,v\}\in E(G)$, $u\in W$ or $v\in W$. A vertex cover is called \textit{minimal}, if it is minimal with respect to inclusion. The \textit{cover ideal}, denoted by $J(G)$, of $G$ is the monomial ideal $$J(G)=(x^{W}\,\,: W \mbox{ is a minimal vertex cover of } G).$$ It is known that $J(G)=I_{\Delta(G)^{*}}=I(G)^{*}$ (see, for example, \cite[Corollary 38]{T13}). Next, we explain the notion of clutters which is defined as a generalization of finite simple graphs. A \textit{clutter} $\mathcal{C}$ with vertex set $V(\mathcal{C})$ is a family of subsets of $V(\mathcal{C})$, called edges, which satisfies that any element does not contain any other elements. For an independent set $A$ of $\mathcal{C}$, the \textit{neighbor set} $N_{\mathcal{C}}(A)$ is the set of vertices $v$ which satisfies that there exist an edge $e$ such that $e \subset A \cup \{ v \}$. For $\mathcal{C}$, $\mathcal{A}_{\mathcal{C}}$ denotes the set of independent sets which neighbor set is a minimal vertex cover of $\mathcal{C}$.

\subsection{Very well-covered graphs and multi-whisker graphs} First, we explain a very well-covered graph. See \cite{CRT, KPTY} in detail. A graph $G$ with no isolated vertices is called \textit{very well-covered} 
if $G$ is well-covered and $|V(G)|=2{\rm ht}I(G)$. Let $H$ be a Cohen--Macaulay very well-covered graph with $2d_0$ vertices. Based on \cite{CRT}, we may assume the following condition:
\vspace{0.1cm}
\begin{itemize}
\item[$(\ast)$] $V(H)=X_{[d_0]}\cup Y_{[d_0]}$, where $X_{[d_0]}=\{x_1,\ldots,x_{d_0}\}$ is a minimal vertex cover of $H$ and $Y_{[d_0]}=\{y_1,\ldots,y_{d_0}\}$ is a maximal independent set of $H$ such that $\{x_1y_1,\ldots,x_{d_0}y_{d_0}\} \subseteq E(H)$.
\end{itemize}
For very well-covered graphs, there is a useful theorem to analyze the structure, which has been proved in \cite[Theorem 3.5]{KPTY}. Before stating this theorem, we will explain the necessary notation. Let $G$ be a graph with $xy\in E(G)$, then, the graph $G^{\prime}$, obtained by replacing the edge $xy$ in $G$ with a complete bipartite graph $K_{i, i}$ is defined as $$V(G')=(V(G)\setminus \{x,y\} )\cup \{x_1,\ldots,x_i\}\cup\{y_1 ,\ldots,y_i\}$$ and  
\[
\begin{array}{lll}
E(G')	& = & E(G_{V(G)\setminus \{x,y\}}) \cup\{x_jy_k\,\,: 1\le j,k\le i\} \\[0.15cm]
& & \hspace{2.65cm}\cup\ \{x_jz\,\,: 1\le j\le i,\ z \in V(G) \setminus \{x,y \},\ xz\in E(G)\} \\[0.15cm]
& & \hspace{2.65cm}\cup\ \{y_jz\,\,: 1\le j\le i,\ z \in V(G) \setminus \{x,y \},\ yz\in E(G)\}.
\end{array}
\]
Let $H$ be a Cohen--Macaulay very well-covered graph with $2d_0$ vertices and assume the condition $(\ast)$. Let $H^{\prime}$ be a graph with the vertex set $$V(H^{\prime})=\bigcup_{i=1}^{d_0}\Big(\big\{ x_{i1},\ldots,x_{in_i}\big\}\cup\big\{y_{i1},\ldots,y_{in_i}\big\}\Big)$$ which is obtained by replacing the edges $x_1y_1,\ldots,x_{d_0}y_{d_0}$ in $H$ with the complete bipartite graphs $K_{n_1,n_1},\ldots,K_{n_{d_0},n_{d_0}}$, respectively. We write $H(n_1,\ldots,n_{d_0})$ for $H^{\prime}$. Under this notation, we have the following theorem. 
\begin{thm}\cite[Theorem 3.5]{KPTY}\label{structure}
Let $G$ be a very well-covered graph on the vertex set $X_{[d]}\cup Y_{[d]}$. Then there exist positive integers $n_1,\ldots,n_{d_0}$ with $\sum_{i\in [d_0] }n_i=d$ and a Cohen--Macaulay very well-covered graph $H$ on the vertex set $X_ {[d_0]}\cup Y_{[d_0]}$ such that $G\cong H(n_1,\ldots,n_{d_0})$.
\end{thm}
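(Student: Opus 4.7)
The plan is to reduce $G$ to a canonical form, then extract $H$ via a carefully chosen equivalence relation. As a first step I would invoke the classical structural result of Favaron (in the form used by \cite{CRT}) to relabel the vertices so that condition $(\ast)$ holds: $V(G)=X_{[d]}\cup Y_{[d]}$, $X_{[d]}$ is a minimum vertex cover, $Y_{[d]}$ is a maximum independent set, and $\{x_1y_1,\ldots,x_dy_d\}\subseteq E(G)$ is a perfect matching. In this canonical form two further ``exchange properties'' can be established by direct manipulation of maximal independent sets:
\begin{itemize}
\item[(S)] $x_iy_j\in E(G)\Longleftrightarrow x_jy_i\in E(G)$,
\item[(T)] $x_iy_j,\,x_jy_k\in E(G)\Longrightarrow x_iy_k\in E(G)$ for pairwise distinct $i,j,k$.
\end{itemize}
Both follow by assuming a violation, then swapping a single $x$-vertex for the corresponding $y$-vertex inside a maximum independent set $Y_{[d]}$ to produce an independent set of wrong cardinality, contradicting well-coveredness together with $|V(G)|=2\,{\rm ht}\,I(G)$.

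Next I would define a relation $\sim$ on $[d]$ by $i\sim j$ iff $x_iy_j\in E(G)$ (declaring $i\sim i$ via $(\ast)$). Reflexivity is part of $(\ast)$, symmetry is (S), and transitivity is (T); hence $\sim$ is an equivalence relation. Let its classes be $C_1,\dots,C_{d_0}$ and put $n_r=|C_r|$, so $\sum n_r=d$. Property (S) applied inside a class shows that the induced bipartite subgraph between $\{x_i:i\in C_r\}$ and $\{y_i:i\in C_r\}$ is the full $K_{n_r,n_r}$. A further exchange argument --- swap $x_i$ for $x_j$ (with $i\sim j$) inside a minimum vertex cover --- yields the uniformity $N_G(x_i)\setminus\{y_i\}=N_G(x_j)\setminus\{y_j\}$ and similarly for the $y$'s, so edges of $G$ between two different classes depend only on the classes, not on the representatives.

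Now pick one index from each class to form a subset $T=\{i_1,\dots,i_{d_0}\}\subseteq[d]$ and let $H$ be the induced subgraph of $G$ on $\{x_i,y_i:i\in T\}$, identified with a graph on $X_{[d_0]}\cup Y_{[d_0]}$. By the uniformity above, reversing the construction (replacing each matching edge $x_ry_r$ in $H$ by $K_{n_r,n_r}$) reproduces $G$ exactly, giving $G\cong H(n_1,\dots,n_{d_0})$. It remains to show $H$ is Cohen--Macaulay very well-covered. Condition $(\ast)$ transfers to $H$ directly, and (S), (T) for $H$ are inherited from $G$; but by construction no two indices of $H$ lie in the same $\sim$-class of $G$, so (T) in $H$ is vacuously strengthened to the CM-axiom of Crupi--Rinaldo--Terai (if $zx_r,y_rw\in E(H)$ with $z\ne y_r$, $w\ne x_r$, then $zw\in E(H)$). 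Verification of this axiom using the uniform-neighborhood property gives Cohen--Macaulayness by \cite{CRT}.

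The main obstacle is the clean verification of the two exchange properties (S) and (T); these encode the essential combinatorics of very well-covered graphs, and all subsequent bookkeeping --- the equivalence relation, the uniformity of neighborhoods, and the CM-axiom for the quotient $H$ --- follows once (S) and (T) are in hand. A secondary subtlety is ensuring that the choice of representatives $T$ does not affect the isomorphism class of $H$, which is precisely what the uniformity of neighborhoods guarantees.
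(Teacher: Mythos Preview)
The paper does not prove this theorem; it is quoted from \cite{KPTY} as a tool, so there is no in-paper argument to compare against. I therefore comment only on the soundness of your strategy.

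The central step---property (S), that $x_iy_j\in E(G)\Longleftrightarrow x_jy_i\in E(G)$---is false for very well-covered graphs, and with it the claimed equivalence relation collapses. A minimal counterexample: take $V=\{x_1,x_2,y_1,y_2\}$ and $E=\{x_1y_1,\ x_2y_2,\ x_1y_2\}$. Here $X_{[2]}=\{x_1,x_2\}$ is a minimal vertex cover, $Y_{[2]}=\{y_1,y_2\}$ is a maximal independent set, and the matching $\{x_1y_1,x_2y_2\}$ witnesses condition $(\ast)$. The maximal independent sets are $\{y_1,y_2\}$, $\{x_1,x_2\}$, $\{y_1,x_2\}$, all of size $2$, so $G$ is very well-covered (indeed Cohen--Macaulay). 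Yet $x_1y_2\in E$ while $x_2y_1\notin E$. Your proposed swap argument does not rescue this: swapping $y_2$ for $x_2$ in $Y_{[2]}$ yields $\{y_1,x_2\}$, which is still independent of the correct size, so no contradiction arises. More conceptually, the CRT normalization you invoke for the Cohen--Macaulay case arranges the $x_iy_j$ edges in an \emph{upper-triangular} pattern (``$i\le j$ whenever $x_iy_j\in E$''), which is manifestly incompatible with the symmetry (S) unless there are no such off-diagonal edges at all.

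Since (S) fails, your relation $i\sim j\Longleftrightarrow x_iy_j\in E(G)$ is not symmetric, hence not an equivalence relation, and the rest of the outline (classes $C_r$, uniform neighborhoods, quotient graph $H$) has no foundation. The correct equivalence in the proof of \cite[Theorem~3.5]{KPTY} is not detected by a single edge but by \emph{equality of neighborhoods}: one groups indices $i,j$ for which $x_i,x_j$ (and correspondingly $y_i,y_j$) are ``twins'' with identical neighbor sets. That relation is automatically symmetric and transitive, it genuinely forces the complete bipartite blocks $K_{n_r,n_r}$, and collapsing each twin class produces a graph $H$ with no nontrivial twins---which is what allows one to verify the CRT axioms and conclude that $H$ is Cohen--Macaulay. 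If you rebuild the argument around this neighborhood relation rather than around (S) and (T), the remaining bookkeeping in your outline goes through essentially as written.
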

Next, we explain the concept of a multi-whisker graph. A multi-whisker graph was introduced by Muta, Pournaki and Terai in \cite{MPT} as a generalization of whisker graphs. Given a graph $G_0$ on the vertex set  $X_{[h]}=\{x_{1}, \ldots, x_{h}\}$ the \textit{whisker graph associated with} $G_{0}$ is the graph $G_{1}=G_{0}[1,\ldots, 1]$ on the vertex set $X_{[h]}\cup Y_{[h]}$, where $Y_{[h]}=\{y_{1},\ldots, y_{h}\}$ and the edge set $$E(G_{1})=E(G_{0})\cup\{\{x_{1},y_{1}\},\ldots, \{x_{h},y_{h}\}\}.$$ Given positive integers $n_{1},\ldots, n_{h}$, the \textit{multi-whisker graph associated with } $G_{0}$ is the graph $G=G_{0}[n_{1},\ldots, n_{h}]$ on the vertex set 
$$X_{[h]}\cup Y, \mbox{ where } Y=\{y_{1,1},\ldots, y_{1, n_{1}}\}\cup\cdots\cup\{y_{h,1},\ldots, y_{h,n_{h}}\}$$ and the edge set $$E(G)=E(G_{0})\cup\{\{x_1,y_{1,1}\},\ldots, \{x_{1},y_{1,n_{1}}\},\ldots, \{x_{h},y_{h,1}\},\ldots, \{x_{h},y_{h,n_{h}}\}\}.$$

\subsection{Serre-depth}
For a finitely generated $S$-module $M$ and for an integer $i$, we set $K_{M}^{i}={\rm Hom}_{k}(H_{\mathfrak{m}}^{i}(M), k)$. Notice that $K_{M}^{i}\simeq{\rm Ext}_{S}^{n-j}(M, S(-n))$. For an unmixed homogeneous ideal $I$ and  $r \ge 2$, the \textit{Serre-depth} for $(S_{r})$ of $S/I$, denoted by $\mbox{\rm $S_r$-depth}\hspace{0.05cm}S/I$, is defined by $$\mbox{\rm $S_r$-depth}\hspace{0.05cm}S/I=\min\big\{j \,\,: \dim K^j_{S/I} \ge j-r+1\big\},$$where ${\rm dim}K_{S/I}^{j}$ is the Krull dimension of $K_{S/I}^{j}$ as an $S$-module.  Also, if $K_{S/I}^j=0$, then we define  ${\rm dim}K_{S/I}^{j}=-\infty$. This notion was introduced in \cite{PPTY1, PPTY2} for Stanley--Reisner ideals. It is known that $S/I$ satisfies $(S_{r})$ is equivalent to $S_{r}\mbox{-}{\rm depth}\hspace{0.05cm}S/I=\dim S/I$ by \cite[Lemma 3.2.1]{s0}. 


 \section{Takayama's formula for the simbolic powers of\\ Stanley--Reisner ideals} 
In this section, we discuss about the symbolic powers of Stanley--Reisner ideals. First, we recall the notion of symbolic powers. Given an integer $i\geq 1$ and a homogeneous ideal $I$, the \textit{$i$-th symbolic power of $I$} is defined to be an ideal $$I^{(i)}=S\cap\bigcap_{P\in{\rm Ass}(I)}I^{i}S_{P}.$$If $I$ is a squarefree monomial ideal, then one has $$I^{(i)}=\bigcap_{P\in{\rm Ass}(I)}P^i.$$ Takayama's formula \cite[Theorem 1]{Ta} is very useful to analyze the local cohomology module of the residue ring by monomial ideals. Here we quote a version in \cite{CHHKTT}. For $\boldmath{a}=(a_1, \dots, a_n) \in \ZZ^n$, we set $x^{ \boldmath{a}}=x_{1}^{a_1}\cdots x_{n}^{a_n}$ and $\supp_{+}\boldmath{a}=\{i \,:\, \ a_i>0\}$,  $\supp_{-}\boldmath{a}=\{i \,:\, \ a_i<0\}$. For $F\subset [n]$, we define $S_F=S[x_j^{-1} \,:\, j\in F]$. The next theorem is called Takayama's formula. 
\begin{thm} \cite[Theorem 1]{Ta}
Let $I$ be a monomial ideal in a polynomial ring $S$ and  $\boldmath{a} \in \ZZ^{n}$. Then
\begin{eqnarray*}
\dim_k  H^{i}_{\frak{m}}(S/I)
=\dim_k {\widetilde{H}_{i-|\supp_{-}\boldmath{a}|-1}}\big(\Delta_{\boldmath{a}}(I); k\big),
\end{eqnarray*}
where $ \Delta_{\boldmath{a}}(I)=\{F\setminus \supp_{-}{\boldmath{a}}; \   \supp_{-}{\boldmath{a}} \subset F, x^{\boldmath{a}}    \not\in IS_F \}$.
\end{thm}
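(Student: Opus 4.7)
The plan is to compute $H^i_{\mathfrak{m}}(S/I)$ via the $\mathbb{Z}^n$-graded \v{C}ech complex
$$\check{C}^{\bullet}(S/I): 0 \to S/I \to \bigoplus_{j=1}^{n}(S/I)_{x_j} \to \bigoplus_{|F|=2}(S/I)_{x^F} \to \cdots \to (S/I)_{x_1\cdots x_n} \to 0,$$
and then to read off its $\mathbf{a}$-graded piece as the reduced cochain complex of $\Delta_{\mathbf{a}}(I)$ with a degree shift of $|\supp_{-}\mathbf{a}|$. Since $\check{C}^{\bullet}$ is $\mathbb{Z}^n$-graded with respect to the standard multigrading, it suffices to analyze one multidegree $\mathbf{a}$ at a time and sum the contributions.

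The first step is to identify the $\mathbf{a}$-graded pieces of the localizations. For a subset $F \subseteq [n]$, a monomial $x^{\mathbf{a}}$ (with $\mathbf{a} \in \mathbb{Z}^n$) lies in $S_F$ precisely when $\supp_{-}\mathbf{a} \subseteq F$, and survives in $S_F/IS_F$ precisely when $x^{\mathbf{a}} \notin IS_F$. Hence the $\mathbf{a}$-graded component $\bigl((S/I)_{x^F}\bigr)_{\mathbf{a}}$ is one-dimensional (spanned by the class of $x^{\mathbf{a}}$) in this case, and zero otherwise. Note that whether $x^{\mathbf{a}} \in IS_F$ depends only on the positive support of $\mathbf{a}$ relative to $F$: if $F' \subseteq F$ and $x^{\mathbf{a}} \notin IS_{F'}$, then $x^{\mathbf{a}} \notin IS_F$ either (because localizing further cannot introduce membership), so the collection of admissible $F$ is closed under passing to supersets of $\supp_{-}\mathbf{a}$, which is precisely what is needed for $\Delta_{\mathbf{a}}(I)$ to be a simplicial complex.

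The second step is to exhibit the bijection between the summands contributing to $\check{C}^{i}(S/I)_{\mathbf{a}}$ and the $(i-|\supp_{-}\mathbf{a}|-1)$-faces of $\Delta_{\mathbf{a}}(I)$. Concretely, send $F \supseteq \supp_{-}\mathbf{a}$ with $x^{\mathbf{a}} \notin IS_F$ to the face $F\setminus\supp_{-}\mathbf{a}$ of $\Delta_{\mathbf{a}}(I)$. This translates the \v{C}ech differential (which is an alternating sum of inclusion-induced maps between localizations) into the simplicial coboundary on the reduced cochain complex $\widetilde{C}^{\bullet}(\Delta_{\mathbf{a}}(I); k)$, once the fixed coordinates indexed by $\supp_{-}\mathbf{a}$ are stripped away. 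Taking cohomology in degree $i$ of the \v{C}ech complex then matches reduced cohomology in degree $i - |\supp_{-}\mathbf{a}| - 1$ of $\Delta_{\mathbf{a}}(I)$, and by the universal coefficient theorem (over a field $k$) this equals the reduced homology in the same degree.

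The main obstacle I anticipate is a bookkeeping one: verifying that the sign conventions built into the \v{C}ech differential correspond correctly to the simplicial coboundary under the bijection $F \mapsto F \setminus \supp_{-}\mathbf{a}$. Removing the fixed set $\supp_{-}\mathbf{a}$ uniformly from every $F$ shifts the ordering used to determine signs, and one must check that this shift is compatible with a global sign change (i.e.\ produces an isomorphism of complexes, not merely a dimension match). A secondary technical point is ensuring that the nonface/face characterization of $\Delta_{\mathbf{a}}(I)$ is indeed well-defined as a simplicial complex, which reduces to the monotonicity observation noted above: if $x^{\mathbf{a}} \notin IS_F$ and $F \subseteq F'$, then $x^{\mathbf{a}} \notin IS_{F'}$.
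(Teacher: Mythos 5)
The paper itself offers no proof of this statement; it is quoted from Takayama \cite{Ta} (in the refined form of \cite{CHHKTT}), so there is no internal argument to compare yours against. Your strategy --- computing $H^i_{\mathfrak{m}}(S/I)$ from the $\ZZ^n$-graded \v{C}ech complex, identifying the degree-$\mathbf{a}$ piece of $(S/I)_{x^F}$ as $k$ exactly when $\supp_{-}\mathbf{a}\subseteq F$ and $x^{\mathbf{a}}\notin IS_F$ and as $0$ otherwise, and matching the degree-$\mathbf{a}$ strand of the \v{C}ech complex with the reduced cochain complex of $\Delta_{\mathbf{a}}(I)$ shifted by $|\supp_{-}\mathbf{a}|$ --- is the standard and correct route to this formula. (One remark: the displayed identity should carry the subscript $\mathbf{a}$ on the left, i.e.\ $\dim_k\bigl[H^i_{\mathfrak{m}}(S/I)\bigr]_{\mathbf{a}}$; you implicitly and correctly work one multidegree at a time.)

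There is, however, a concrete error: your monotonicity claim is stated with the inclusion reversed, and as written it is false. Since $F'\subseteq F$ gives $S_{F'}\subseteq S_F$ and hence $IS_{F'}\subseteq IS_F$, localizing further \emph{can} introduce membership: for $I=(x_1x_2)$ and $x^{\mathbf{a}}=x_1$ one has $x_1\notin IS_{\emptyset}=I$ but $x_1=x_1x_2\cdot x_2^{-1}\in IS_{\{2\}}$. The correct implication is the contrapositive of $IS_{F'}\subseteq IS_F$: if $x^{\mathbf{a}}\notin IS_F$ and $\supp_{-}\mathbf{a}\subseteq F'\subseteq F$, then $x^{\mathbf{a}}\notin IS_{F'}$. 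Consequently the admissible sets $F$ are closed under passing to \emph{subsets} containing $\supp_{-}\mathbf{a}$, not supersets, and it is precisely this downward closure that makes $\Delta_{\mathbf{a}}(I)$ a simplicial complex (a complex must be closed under taking subsets; closure under supersets would be the wrong condition entirely). This is a localized and fixable slip: once the inclusion is turned around, the rest of your argument --- the bijection between \v{C}ech summands and faces, the compatibility of the differentials up to a consistent choice of signs, and the passage from reduced cohomology to reduced homology over the field $k$ --- goes through and yields the stated formula.
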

By using this, we give a version of Takayama's formula for the residue rings by the symbolic powers of Stanley--Reisner ideals.
\begin{thm} \label{Takayama}
Let $\Delta$ be a  simplicial complex and $\ell$ be a positive integer.  Then the Hilbert series of the local cohomology modules of $S/I_{\Delta}^{(\ell)}$ is given by 
$$F\big(H^{i}_{\frak{m}}(S/I_{\Delta}^{(\ell)}),t\big)=\sum_{F \in \Delta} (\sum_{\boldmath{a}\in \NN^{V(\link_{\Delta}F)}} \dim_k \widetilde{H}_{i-|F|-1}\big(\Delta_{\boldmath{a}}(I_{\link_ {\Delta}F}^{(\ell)});k\big)t^{|\boldmath{a}|}\left(\frac{t^{-1}}{1-t^{-1}}\right)^{|F|}),$$
 where $ \NN^{V(\link_{\Delta}F)}\hookrightarrow  \NN^{V(\Delta)}$ is induced by $ V(\link_{\Delta}F)\hookrightarrow  V(\Delta)$. 
\end{thm}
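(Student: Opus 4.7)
The plan is to apply Takayama's formula to $I = I_{\Delta}^{(\ell)}$ and reorganize the resulting sum according to the negative support $F := \supp_{-}\boldmath{a}$ of each multidegree $\boldmath{a} \in \ZZ^{n}$. For each fixed $F$, I write $\boldmath{a} = \boldmath{b} - \boldmath{c}$, where $\boldmath{b} \in \NN^{V \setminus F}$ is the positive part and $\boldmath{c} \in \ZZ_{>0}^{F}$ is the negative part, so that $|\boldmath{a}| = |\boldmath{b}| - |\boldmath{c}|$. Summing over $\boldmath{c}$ then produces the factor $\left(\frac{t^{-1}}{1-t^{-1}}\right)^{|F|}$ via an elementary geometric series $\prod_{i \in F}\sum_{k \geq 1} t^{-k}$.

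The core technical step is to analyze when $x^{\boldmath{a}} \in I_{\Delta}^{(\ell)} S_{T}$ for subsets $T \supset F$ of $[n]$. Using the primary decomposition $I_{\Delta}^{(\ell)} = \bigcap_{G \in \mathcal{F}(\Delta)} P_{G}^{\ell}$ and the observation that $P_{G}^{\ell} S_{T} = S_{T}$ whenever $T \not\subset G$, a direct computation with the Laurent monomial $x^{\boldmath{a}}$ shows that $x^{\boldmath{a}} \in I_{\Delta}^{(\ell)} S_{T}$ precisely when every facet $G \in \mathcal{F}(\Delta)$ with $T \subset G$ satisfies $\sum_{j \in V \setminus G} b_{j} \geq \ell$. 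Two consequences follow: the Takayama complex $\Delta_{\boldmath{a}}(I_{\Delta}^{(\ell)})$ depends only on $F$ and $\boldmath{b}$, not on $\boldmath{c}$; and if $F \notin \Delta$, then no facet of $\Delta$ contains $F$, so $\Delta_{\boldmath{a}}(I_{\Delta}^{(\ell)})$ is void and contributes nothing to the Hilbert series.

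For $F \in \Delta$, I identify $\Delta_{\boldmath{a}}(I_{\Delta}^{(\ell)})$ with $\Delta_{\boldmath{b}}(I_{\link_{\Delta} F}^{(\ell)})$ via the bijection $T \leftrightarrow T \setminus F$ between subsets of $[n]$ containing $F$ and subsets of $V \setminus F$. Facets of $\Delta$ containing $T$ correspond bijectively to facets of $\link_{\Delta} F$ containing $T \setminus F$, and the subset $V \setminus G$ is unchanged when one passes to the corresponding facet $G \setminus F$ of the link. Hence the defining inequalities of the two Takayama complexes match term by term, so the complexes coincide and have the same reduced simplicial homology in every degree.

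Combining the contributions of $\boldmath{b}$ and $\boldmath{c}$ over each $F \in \Delta$ yields exactly the stated formula. The main obstacle is the technical description of $x^{\boldmath{a}} \in I_{\Delta}^{(\ell)} S_{T}$ above; symbolic powers do not interact trivially with localization, and one must carry out the analysis primary-component by primary-component. Once that lemma is established, however, the remainder of the argument is essentially bookkeeping: factoring $t^{|\boldmath{a}|} = t^{|\boldmath{b}|} \cdot t^{-|\boldmath{c}|}$, summing the geometric series for $\boldmath{c}$, and recognizing the resulting sum over $\boldmath{b}$ as the Takayama data of the link.
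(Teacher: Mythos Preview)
Your approach is essentially identical to the paper's: both apply Takayama's formula, group the multidegrees by $F=\supp_{-}\boldmath{a}$, show that $\Delta_{\boldmath{a}}(I_{\Delta}^{(\ell)})$ coincides with $\Delta_{\boldmath{a}_{+}}(I_{\link_{\Delta}F}^{(\ell)})$, and then sum the geometric series over the strictly negative coordinates to produce the factor $\bigl(\tfrac{t^{-1}}{1-t^{-1}}\bigr)^{|F|}$. Your write-up is more explicit than the paper's three-line identification, since you work component by component through the primary decomposition $I_{\Delta}^{(\ell)}=\bigcap_{G\in\mathcal{F}(\Delta)}P_{G}^{\ell}$ and spell out the membership criterion $\sum_{j\in V\setminus G}b_{j}\ge \ell$; the paper simply asserts the equality of the two Takayama complexes.

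One small bookkeeping point you should tighten: you take $\boldmath{b}\in\NN^{V\setminus F}$, whereas the stated formula sums only over $\NN^{V(\link_{\Delta}F)}$, and your sentence ``the subset $V\setminus G$ is unchanged when one passes to the corresponding facet $G\setminus F$ of the link'' is only literally true after intersecting with $V(\link_{\Delta}F)$. The paper's proof is equally silent on how the coordinates of $\boldmath{a}_{+}$ supported on $(V\setminus F)\setminus V(\link_{\Delta}F)$ are absorbed, so this is not a divergence from the paper but a shared loose end you may wish to address.
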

\begin{proof}
First we show that $\Delta_{\boldmath{a}}(I_{\Delta}^{(\ell)})=\Delta_{\boldmath{a_+}}(I_{\link_ {\Delta}F}^{(\ell)})$ for $\boldmath{a}\in \ZZ^n$. We may assume that $\supp_{-}\boldmath{a} \in \Delta$. Then, we have 
\begin{eqnarray*}
\Delta_{\boldmath{a}}(I_{\Delta}^{(\ell)}) &=& \{G\setminus F \,:\,F\subset G, x^{\boldmath{a}}\not\in I_{\Delta}^{(\ell)}S_G\}\\
&=& \{H\in \link_ {\Delta}F \,:\,  x^{\boldmath{a _{+}}}\not\in I_{\link_ {\Delta}F}^{(\ell)}S_H\}\\
&=& \Delta_{\boldmath{a_+}}(I_{\link_ {\Delta}F}^{(\ell)}),
\end{eqnarray*}
where $F=\supp_{-}\boldmath{a}$. Notice that for $\boldmath{a}, \boldmath{b} \in \ZZ^n$ with $\supp_{+}\boldmath{a}=\supp_{+}\boldmath{b}$ and $\supp_{-}\boldmath{a}=\supp_{-}\boldmath{b}$, we have $\Delta_{\boldmath{a}}(I_{\Delta}^{(\ell)})=\Delta_{\boldmath{b}}(I_{\Delta}^{(\ell)})$  by the definition. Each variable corresponding to a vertex in $F$ can move from $-1$ to $-\infty$. Hence  for $F \in \Delta$, 
\begin{eqnarray*}
&&\sum_{\supp_{-}\boldmath{a}=F } \dim _{k}\big[H^{i}_{\frak{m}}(S/I_{\Delta}^{(\ell)})\big]_{\boldmath{a}}\ t^{|\boldmath{a}|}\\[-0.2cm]
&=&\sum_{\boldmath{a}\in \NN^{V(\link_{\Delta}F)}} \dim_k \widetilde{H}_{i-|F|-1}\big(\Delta_{\boldmath{a}}(I_{\link_ {\Delta}F}^{(\ell)});k\big)t^{|\boldmath{a}|}\left(\frac{t^{-1}}{1-t^{-1}}\right)^{|F|}.
\end{eqnarray*}
Therefore, we obtain that  

$$F\big(H^{i}_{\frak{m}}(S/I_{\Delta}^{(\ell)}),t\big)
= \sum_{F \in \Delta} (\sum_{\boldmath{a}\in \NN^{V(\link_{\Delta}F)}} \dim_k \widetilde{H}_{i-|F|-1}\big(\Delta_{\boldmath{a}}(I_{\link_ {\Delta}F}^{(\ell)});k\big)t^{|\boldmath{a}|}\left(\frac{t^{-1}}{1-t^{-1}}\right)^{|F|}).$$
\end{proof}
As a corollary of Theorem \ref{Takayama}, we obtain the formula for the Hilbert series of $K^{i}_{S/I_{\Delta}^{(\ell)}}$. 
\begin{cor} \label{dual}
Let $\Delta$ be a  simplicial complex.  Then
\begin{eqnarray*}
F\big(K^{i}_{S/I_{\Delta}^{(\ell)}},t\big)
& = & \sum_j \dim _{k}\big[K^{i}_{S/I_{\Delta}^{(\ell)}}\big]_{j}\ t^{j}\\[-0.2cm]
& = & \sum_{F \in \Delta} \big(\sum_{\boldmath{a}\in \NN^{V(\link_{\Delta}F)}} \dim_k \widetilde{H}_{i-|F|-1}\big(\Delta_{\boldmath{a}}(I_{\link_ {\Delta}F}^{(\ell)});k\big)t^{-|\boldmath{a}|}\left(\frac{t}{1-t}\right)^{|F|}\big),
\end{eqnarray*}
where $ \NN^{V(\link_{\Delta}F)}\hookrightarrow  \NN^{V(\Delta)}$ is induced by $ V(\link_{\Delta}F)\hookrightarrow  V(\Delta)$. 
\end{cor}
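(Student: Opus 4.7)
The plan is to derive Corollary \ref{dual} directly from Theorem \ref{Takayama} by passing through graded Matlis (graded $k$-vector space) duality. The corollary is not really a new combinatorial statement — it just records what the formula of Theorem \ref{Takayama} becomes after dualizing, and the only subtlety is tracking the grading convention.

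First I would recall the definition $K_M^i = \mathrm{Hom}_k(H^i_{\frak m}(M),k)$, together with the standard convention that for a $\ZZ$-graded $k$-vector space $N = \bigoplus_j N_j$ with each $N_j$ finite-dimensional, the graded dual $\mathrm{Hom}_k(N,k)$ has graded pieces
\[
\mathrm{Hom}_k(N,k)_j \;\cong\; \mathrm{Hom}_k(N_{-j},k).
\]
Since the local cohomology modules $H^i_{\frak m}(S/I_\Delta^{(\ell)})$ are $\ZZ^n$-graded with finite-dimensional graded components (which is exactly what Theorem \ref{Takayama} presupposes by giving a finite Hilbert series in each total degree), this yields the identity of formal Laurent series
\[
F\bigl(K^i_{S/I_\Delta^{(\ell)}},t\bigr) \;=\; F\bigl(H^i_{\frak m}(S/I_\Delta^{(\ell)}),t^{-1}\bigr).
\]

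Next I would substitute $t \mapsto t^{-1}$ in the formula supplied by Theorem \ref{Takayama}. Under this substitution, $t^{|\mathbf{a}|}$ becomes $t^{-|\mathbf{a}|}$, and the rational factor transforms as
\[
\left(\frac{t^{-1}}{1-t^{-1}}\right)^{|F|} \;\longmapsto\; \left(\frac{t}{1-t}\right)^{|F|},
\]
while the dimensions $\dim_k \widetilde{H}_{i-|F|-1}\bigl(\Delta_{\mathbf{a}}(I_{\link_\Delta F}^{(\ell)});k\bigr)$ are unchanged (they are combinatorial data independent of $t$). Assembling these pieces term by term over $F \in \Delta$ and $\mathbf{a} \in \NN^{V(\link_\Delta F)}$ reproduces exactly the right-hand side of Corollary \ref{dual}.

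The only step that deserves a moment of care is the grading bookkeeping: one must check that the sign convention in the exponent of $t$ matches on both sides, and that the formal Laurent series involved converge in the appropriate sense (each graded piece is finite-dimensional, so both series make sense as elements of $\ZZ[\![t]\!][t^{-1}]$ or $\ZZ[\![t^{-1}]\!][t]$, and the substitution $t \mapsto t^{-1}$ is legitimate). Since the formula in Theorem \ref{Takayama} is a sum of finitely many terms of the shape (polynomial in $t$)$\cdot \bigl(t^{-1}/(1-t^{-1})\bigr)^{|F|}$ for $F \in \Delta$, there is no convergence issue, and the substitution is term-by-term. This is the only potential obstacle, but it is routine; the substance of the statement is entirely contained in Theorem \ref{Takayama}.
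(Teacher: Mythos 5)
Your proposal is correct and matches the paper's (implicit) derivation: the corollary is stated as an immediate consequence of Theorem \ref{Takayama}, obtained precisely by the graded duality $[\Hom_k(N,k)]_j\cong\Hom_k(N_{-j},k)$ and the substitution $t\mapsto t^{-1}$ in the Hilbert series, exactly as you describe. The grading bookkeeping and the finiteness of the graded pieces (which follows from the Artinianness of $H^i_{\frak m}$) are handled appropriately, so nothing is missing.
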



\section{the characterization of the {\rm v}-numbers via the Alexander dual}
In this section, we characterize the v-number of the  Stanley--Reisner ideal of a simplicial complex in terms of its Alexander dual. For this purpose, we prepare the next lemma.   
\begin{lemma}\label{cha_lem}
Let $\Delta$ be a simplicial complex on the vertex set $V$ and let $P$ be an associated prime of  $I_{\Delta}$. Then $$\mbox{\rm v}_P( I_{\Delta})=\min \{ |C| \,:\, I_{\Delta^{*}} \not\subset (x_i : x_i \in C) , I_{\Delta^{*}}\subset  (x_j)+ (x_i : x_i \in C)  \mbox { for all } x_j\in P\}.$$
\end{lemma}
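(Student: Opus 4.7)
The plan is to translate the definition of $\mbox{\rm v}_P(I_\Delta)$ into a combinatorial statement about squarefree monomial witnesses, and then to re-express the two resulting conditions via the dictionary between faces of $\Delta$ and minimal generators of $I_{\Delta^*}$ given by Proposition~\ref{A-dualideal}.

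First I would reduce to witnesses that are squarefree monomials. Given a (homogeneous) polynomial $f=\sum c_i x^{\alpha_i}$ with $(I_\Delta:f)=P$, the containment $P\subset(I_\Delta:f)$ combined with the fact that $I_\Delta$ is a monomial ideal (so no cancellation occurs among the distinct monomials $x_j x^{\alpha_i}$) forces $x_j x^{\alpha_i}\in I_\Delta$ for every $x_j\in P$ and every $i$; hence $P\subset(I_\Delta:x^{\alpha_i})$ for all $i$. On the other hand $\bigcap_i(I_\Delta:x^{\alpha_i})\subset(I_\Delta:f)=P$, so all these colon ideals coincide with $P$, and primeness of $P$ together with the finiteness of the intersection yields $(I_\Delta:x^{\alpha_j})=P$ for some $j$. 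Setting $C=\supp(\alpha_j)$, the squarefreeness of $I_\Delta$ gives $(I_\Delta:x^{\alpha_j})=(I_\Delta:x^C)$, so one obtains a squarefree witness $x^C$ of degree $|C|\le\deg f$.

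Second, for a squarefree monomial $x^C$ I would compute $(I_\Delta:x^C)$ directly and convert everything into the Alexander-dual language. Expanding the colon of a squarefree monomial ideal, the minimal generators of $(I_\Delta:x^C)$ are the monomials $x^{G\setminus C}$ as $G$ ranges over the minimal non-faces of $\Delta$. In particular $(I_\Delta:x^C)\ne S$ iff $C\in\Delta$, and for $x_j\in P$ the relation $x_j\in(I_\Delta:x^C)$ is equivalent to $C\cup\{j\}\notin\Delta$. By Proposition~\ref{A-dualideal}, $I_{\Delta^*}$ is generated by $\{x^{V\setminus F}:F\in\mathcal{F}(\Delta)\}$, so $I_{\Delta^*}\subset(x_i:x_i\in C)$ is equivalent to $(V\setminus F)\cap C\ne\emptyset$ for every facet $F$, i.e.\ to $C\notin\Delta$. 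Applying the same equivalence with $C$ replaced by $C\cup\{j\}$ shows that the two conditions in the lemma translate precisely into: $C\in\Delta$, and $C\cup\{j\}\notin\Delta$ for every $x_j\in P$.

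Finally I would verify that these combinatorial conditions yield not only $P\subset(I_\Delta:x^C)$ but also the reverse inclusion, so that they are equivalent to $(I_\Delta:x^C)=P$. Here the hypothesis $P\in\Ass S/I_\Delta$ is essential: one has $P=P_F$ for some facet $F\in\mathcal{F}(\Delta)$, and the condition $C\cup\{j\}\notin\Delta$ for every $j\in V\setminus F$ forces $C\subset F$. For any minimal non-face $G$ of $\Delta$ one then has $G\not\subset F$, so $G\setminus C\supset G\setminus F\ne\emptyset$ meets $V\setminus F=\supp(P)$, whence the generator $x^{G\setminus C}$ of $(I_\Delta:x^C)$ lies in $P$. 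I expect the main obstacle to be the clean execution of the first step, namely the reduction from an arbitrary polynomial witness to a squarefree monomial witness; once that is in hand, the Alexander-dual translation and the use of associatedness of $P$ are routine.
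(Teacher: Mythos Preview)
Your argument is correct and follows essentially the same Alexander-dual translation as the paper: both proofs identify $x_C\in I_\Delta$ with $I_{\Delta^*}\subset(x_i:x_i\in C)$ and then apply this to $C$ and to $C\cup\{j\}$. The only difference is that the paper invokes \cite[Lemma~3.4(c)]{JV} as a black box to obtain both the reduction to squarefree monomial witnesses and the equivalence $(I_\Delta:x_C)=P\iff x_C\notin I_\Delta$ and $x_jx_C\in I_\Delta$ for all $x_j\in P$, whereas you establish these from scratch---in particular your observation that the conditions force $C\subset F$ (for $P=P_F$) and hence $(I_\Delta:x^C)\subset P$ is exactly the content hidden in that citation.
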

\begin{proof} 
Notice that for $C\subset  V=\{x_1, \dots, x_n\}$, $x_C\in I_{\Delta}$ if and only if $x_C$ is divisible by some  $x_D\in\mathcal{G}(I_{\Delta}).$ The latter one is equivalent to $(x_i : x_i \in C)$ contains some associated prime $(x_i : x_i \in D)$ of $I_{\Delta^{*}}.$ Moreover it is equivalent to $I_{\Delta^{*}}\subset  (x_i : x_i \in C).$ Hence, by \cite[Lemma 3.4(c)]{JV}, we obtain that 
\begin{align*}
&\mbox{\rm v}_P( I_{\Delta}) \\
&=\min \{ |C| \,:\,  I_{\Delta}:x_C=P\} \\
&=\min \{ |C| \,:\, x_C\not\in I_{\Delta}, x_jx_C \in I_{\Delta}  \mbox { for all } x_j\in P\} \\
&=\min \{ |C| \,:\, I_{\Delta^{*}} \not\subset (x_i : x_i \in C) , I_{\Delta^{*}}\subset(x_j)+ (x_i : x_i \in C)\mbox { for all } x_j\in P\},
\end{align*}
which completes the proof. 
\end{proof}
Now  we characterize the v-number of the  Stanley--Reisner ideal of a simplicial complex in terms of its Alexander dual.
\begin{thm}\label{intersection}
Let $\Delta$ be a simplicial complex on the vertex set $ V=\{x_1, \dots, x_n\}$. Assume $P=(x_1, x_2, \dots,  x_h)$ be an associated prime of  $I_{\Delta}$. Then $$\mbox{\rm v}_P( I_{\Delta})=n-h -\max \{ |\cap_{j=1}^{h} F_j | \,:\, F_j \in \mathcal{F}(\Delta^{*}) \mbox{ such that }V|_{[h]\setminus{\{i\}}}\subset F_j \mbox{ for all } j \},$$
where $V|_{[h]\setminus{\{i\}}}=\{x_{1},\ldots, \hat{x_{i}}, \ldots, x_{h}\}$
\end{thm}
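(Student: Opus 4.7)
The plan is to use Lemma \ref{cha_lem} and convert its two conditions into the language of facets of $\Delta^*$. By Proposition \ref{A-dualideal}, the associated prime $P=(x_1,\ldots,x_h)$ corresponds to the facet $V\setminus P=\{x_{h+1},\ldots,x_n\}\in\mathcal{F}(\Delta)$, and the minimal generators of $I_{\Delta^*}$ are $x^{V\setminus F}$ for $F\in\mathcal{F}(\Delta)$. Using this, I would first rewrite the conditions on $C$ in Lemma \ref{cha_lem} as follows: $I_{\Delta^*}\not\subset(x_i:x_i\in C)$ is equivalent to the existence of some facet $F\in\mathcal{F}(\Delta)$ with $C\subset F$, i.e.\ $C\in\Delta$; and $I_{\Delta^*}\subset(x_j)+(x_i:x_i\in C)$ for every $x_j\in P$ is equivalent to saying that no facet of $\Delta$ contains $C\cup\{x_j\}$, that is, $C\cup\{x_j\}\notin\Delta$ for every $j\in[h]$. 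These two conditions together force $C\cap P=\emptyset$, whence
\[
\mbox{\rm v}_P(I_{\Delta})=\min\{|C|\,:\,C\subset V\setminus P,\ C\in\Delta,\ C\cup\{x_j\}\notin\Delta\text{ for all }j\in[h]\}.
\]

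Next I would translate across the Alexander duality. For a $C$ as above, the condition $C\cup\{x_j\}\notin\Delta$ is equivalent to $V\setminus(C\cup\{x_j\})\in\Delta^*$; since $C\cap P=\emptyset$, this set contains $V|_{[h]\setminus\{j\}}$. Extending to a facet $F_j\in\mathcal{F}(\Delta^*)$ containing it preserves the inclusion $V|_{[h]\setminus\{j\}}\subset F_j$. A key observation is that necessarily $x_j\notin F_j$: otherwise $F_j\supset\{x_1,\ldots,x_h\}$, whence $V\setminus F_j\subset V\setminus P$ would force $V\setminus F_j\in\Delta$ (since $V\setminus P$ is a face of $\Delta$), contradicting $F_j\in\Delta^*$. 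Therefore $\bigcap_{j=1}^h F_j\subset V\setminus P$, and since each $F_j\supset V\setminus(C\cup P)$ we obtain $|\bigcap_j F_j|\ge(n-h)-|C|$.

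For the reverse direction, given facets $F_1,\ldots,F_h\in\mathcal{F}(\Delta^*)$ with $V|_{[h]\setminus\{j\}}\subset F_j$ for each $j$, I would set $C=(V\setminus P)\setminus\bigcap_{j=1}^h F_j$. The same observation ($x_j\notin F_j$) ensures $\bigcap_j F_j\subset V\setminus P$, so $|C|=(n-h)-|\bigcap_j F_j|$. Since $C\subset V\setminus P\in\mathcal{F}(\Delta)$ we get $C\in\Delta$; and the inclusion $V\setminus(C\cup\{x_j\})=V|_{[h]\setminus\{j\}}\cup\bigcap_k F_k\subset F_j\in\Delta^*$ shows $C\cup\{x_j\}\notin\Delta$. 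Combining the two directions yields $\mbox{\rm v}_P(I_{\Delta})=(n-h)-\max|\bigcap_j F_j|$, as desired.

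The main obstacle is the careful set-theoretic bookkeeping between the face $C\subset V\setminus P$ and the facets $F_j$ of $\Delta^*$; in particular, the crucial exclusion $x_j\notin F_j$ makes the complement $V\setminus P$ and the intersection $\bigcap_j F_j$ interact correctly, and this is exactly where the hypothesis that $P$ is an associated prime (equivalently, $V\setminus P$ is a facet of $\Delta$) is used.
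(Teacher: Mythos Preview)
Your proof is correct and follows essentially the same route as the paper's: both start from Lemma~\ref{cha_lem} and translate its two conditions through Alexander duality. The paper phrases the translation as ``$V\setminus C\notin\Delta^{*}$ and $(V\setminus C)\setminus\{x_j\}\in\Delta^{*}$'' and then sets $F=V\setminus C$, $F_j=F\setminus\{x_j\}$, while you phrase it as ``$C\in\Delta$ and $C\cup\{x_j\}\notin\Delta$'' and then extend $V\setminus(C\cup\{x_j\})$ to facets; these are the same computation up to complementation. Your write-up is in fact more explicit on one point the paper treats tersely: the paper's $F_j=F\setminus\{x_j\}$ are only faces of $\Delta^{*}$, and the passage to \emph{facets} (together with the reverse construction producing an admissible $C$ from a given tuple $(F_j)$) is left implicit there, whereas you spell out both inequalities and isolate the key observation $x_j\notin F_j$ that makes the bookkeeping work.
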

\begin{proof} 
Notice that  $\{x_1, \dots,  x_h\} \not\in \Delta^{*}$. By Lemma \ref{cha_lem},  we have $$\mbox{\rm v}_P( I_{\Delta})=\min \{ |C| \,:\, I_{\Delta^{*}} \not\subset (x_i : x_i \in C) , I_{\Delta^{*}}\subset  (x_j)+ (x_i : x_i \in C)  \mbox { for all } x_j\in P\}.$$ Now, since  $I_{\Delta^{*}} \not\subset (x_i : x_i \in C)$ and $I_{\Delta^{*}}\subset  (x_j)+ (x_i : x_i \in C)$ for all $x_j\in P$ if and only if $V\setminus C \not\in \Delta^{*}$ and $V\setminus (C  \cup \{ x_j\})\in \Delta^{*} $ for all $1\le j\le h$, by taking $F\subset V$ such that $V\setminus F=C$, we obtain that $$\mbox{\rm v}_P( I_{\Delta})=n-\max \{ |V\setminus F| \,:\, F \not\in \Delta^{*} ,  F \setminus \{ x_j\}\in \Delta^{*}  \mbox { for all } 1\le j\le h \}.$$ Therefore, by letting $F_{j}=F\setminus\{x_{j}\}$, we have $|F\setminus\{x_{1},\ldots, x_{h}\}|=|\cap_{j=1}^{h} F_{j} |$, the assertion follows.  
\end{proof}

Thanks to Theorem \ref{intersection}, we obtain the following formula for cover ideals: 
\begin{cor}\label{cover ideals}
For a graph $G$ with $E(G)\neq\emptyset$, we have 
$${\rm v}(J(G))=|V(G)|-\max\{|F\cap F^{\prime}|\,\,: F, F^{\prime}\in\mathcal{F}(\Delta(G))\mbox{ with }F\neq F^{\prime}\}-2.$$
\end{cor}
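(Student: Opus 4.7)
The plan is to apply Theorem \ref{intersection} to the simplicial complex $\Delta(G)^{*}$, using that $J(G)=I_{\Delta(G)^{*}}$ and $(\Delta(G)^{*})^{*}=\Delta(G)$. First I would identify the associated primes of $J(G)$. By Proposition \ref{A-dualideal}, the facets of $\Delta(G)^{*}$ correspond bijectively to the minimal non-faces of $\Delta(G)$, which are precisely the edges of $G$; hence
$$\mathrm{Ass}(S/J(G))=\{(x_{u},x_{v}) : \{u,v\}\in E(G)\},$$
and every associated prime has height $2$.

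Next, for a fixed associated prime $P=(x_{a},x_{b})$ coming from the edge $\{a,b\}\in E(G)$, I would apply Theorem \ref{intersection} with $h=2$. Since the facets of $(\Delta(G)^{*})^{*}=\Delta(G)$ are exactly the maximal independent sets of $G$, and the condition $V|_{[2]\setminus\{j\}}\subset F_{j}$ translates to $b\in F_{1}$ and $a\in F_{2}$, the theorem gives
$$\mbox{\rm v}_{P}(J(G))=n-2-\max\{|F_{1}\cap F_{2}| : F_{1},F_{2}\in\mathcal{F}(\Delta(G)),\ b\in F_{1},\ a\in F_{2}\}.$$
Taking the minimum over all edges yields
$$\mbox{\rm v}(J(G))=n-2-\max\{|F_{1}\cap F_{2}| : F_{1},F_{2}\in\mathcal{F}(\Delta(G)),\ \exists\{u,v\}\in E(G)\text{ with }v\in F_{1},\,u\in F_{2}\}.$$

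The main (and only substantive) step is to identify this constrained maximum with the unconstrained maximum $\max\{|F\cap F'| : F\neq F'\in\mathcal{F}(\Delta(G))\}$. The key combinatorial observation is: whenever $F_{1}\neq F_{2}$ are two distinct maximal independent sets of $G$, there exist $v\in F_{1}\setminus F_{2}$ and $u\in F_{2}\setminus F_{1}$ with $\{u,v\}\in E(G)$. Indeed, pick any $v\in F_{1}\setminus F_{2}$; by maximality of $F_{2}$, the set $F_{2}\cup\{v\}$ is not independent, so there is some $u\in F_{2}$ with $\{u,v\}\in E(G)$, and since $F_{1}$ is independent and contains $v$ we must have $u\notin F_{1}$. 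Conversely, any such edge $\{u,v\}$ with $v\in F_{1}$, $u\in F_{2}$ forces $F_{1}\neq F_{2}$ because $F_{2}$ is independent and already contains $u$, so $v\notin F_{2}$. Assembling these observations produces the formula. The existence of an edge (hypothesis $E(G)\neq\emptyset$) is needed to ensure both maxima are taken over non-empty sets, so the expressions are well defined.
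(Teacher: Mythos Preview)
Your argument is correct and follows exactly the route the paper intends: the corollary is stated there without proof as an immediate consequence of Theorem~\ref{intersection}, and you have simply supplied the omitted details (identifying $\mathrm{Ass}(S/J(G))$ with the edges and verifying that the constrained maximum over pairs $(F_1,F_2)$ admitting an edge coincides with the unconstrained maximum over distinct facets). Nothing further is needed.
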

In the rest of this section, we give simple corollaries derived from Theorem \ref{intersection}. 
\begin{cor}
Let $\Delta$ be a simplicial complex on the vertex set $V$. Then $$\mbox{\rm v}_h( I_{\Delta}) \ge \min \{\deg \lcm(m_1,m_2, \ldots, m_h) \,:\,m_1, m_2, \ldots, m_h \in \mathcal{G}(I_{\Delta}) \mbox{ are distinct}\}-h$$for  an integer $h\ge 1$.
\end{cor}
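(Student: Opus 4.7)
The plan is to translate the right-hand side into a statement about facets of the Alexander dual complex $\Delta^{*}$ and then read off the inequality directly from Theorem \ref{intersection}. The key observation will be that the tuples of facets appearing in the ``star-condition'' of Theorem \ref{intersection} are automatically pairwise distinct, which lets me enlarge the max over which that theorem is taken.

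By Proposition \ref{A-dualideal}, the minimal generators of $I_{\Delta}$ are precisely the monomials $x^{V\setminus F}$ with $F\in \mathcal{F}(\Delta^{*})$. Hence if $m_1,\dots,m_h \in \mathcal{G}(I_{\Delta})$ are distinct and correspond to distinct facets $F_1,\dots,F_h\in\mathcal{F}(\Delta^{*})$, then
\[
\deg\lcm(m_1,\dots,m_h)=|V\setminus (F_1\cap\cdots\cap F_h)|=n-|F_1\cap\cdots\cap F_h|,
\]
so that
\[
\min\{\deg\lcm(m_1,\dots,m_h)\}-h = n-h-\max\bigl\{|F_1\cap\cdots\cap F_h|\,:\,F_j\in\mathcal{F}(\Delta^{*})\text{ distinct}\bigr\}.
\]
Now fix $P\in\Ass(S/I_{\Delta})$ with ${\rm ht}\,P=h$. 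After relabeling, $P=(x_1,\dots,x_h)$, and Theorem \ref{intersection} gives
\[
\mbox{\rm v}_P(I_{\Delta})=n-h-\max\bigl\{|F_1\cap\cdots\cap F_h|\,:\,F_j\in\mathcal{F}(\Delta^{*}),\ V|_{[h]\setminus\{j\}}\subset F_j\bigr\}.
\]

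The crucial step is to verify that any $(F_1,\dots,F_h)$ satisfying the star-condition consists of pairwise distinct facets. For this I would apply Proposition \ref{A-dualideal} again: since $P=(x_1,\dots,x_h)\in\Ass(S/I_{\Delta})$ corresponds to a facet $F'\in\mathcal{F}(\Delta)$ with $V\setminus F'=\{x_1,\dots,x_h\}$, the set $\{x_1,\dots,x_h\}$ is a minimal non-face of $\Delta^{*}$; in particular no facet of $\Delta^{*}$ contains it. Combined with $V|_{[h]\setminus\{j\}}\subset F_j$, this forces $x_j\notin F_j$ while $x_{j'}\in F_j$ for every $j'\neq j$, so $F_1,\dots,F_h$ are indeed pairwise distinct. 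Hence the constrained max in Theorem \ref{intersection} is bounded above by the max over all distinct tuples, and
\[
\mbox{\rm v}_P(I_{\Delta})\ge n-h-\max\bigl\{|F_1\cap\cdots\cap F_h|\,:\,F_j\in\mathcal{F}(\Delta^{*})\text{ distinct}\bigr\}=\min\{\deg\lcm(m_1,\dots,m_h)\}-h.
\]
Taking the minimum over all $P$ of height $h$ yields the desired inequality for $\mbox{\rm v}_h(I_{\Delta})$; boundary cases ($|\mathcal{G}(I_{\Delta})|<h$ or no associated prime of height $h$) are covered by the usual convention that the empty minimum is $+\infty$. The only genuinely non-formal point is the distinctness argument in the middle paragraph, which is also where the ``$\ge$'' rather than ``$=$'' of the corollary comes from, since the star-condition is strictly stronger than distinctness.
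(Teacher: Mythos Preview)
Your proof is correct and follows essentially the same route as the paper's: apply Theorem~\ref{intersection}, then enlarge the constrained maximum to the maximum over all $h$-tuples of distinct facets of $\Delta^{*}$, and translate via Proposition~\ref{A-dualideal} into the $\lcm$ formulation. The paper's proof is terser and leaves the distinctness of the $F_j$ implicit; your explicit verification that the star-condition forces $x_j\notin F_j$ (because $\{x_1,\dots,x_h\}$ is a minimal non-face of $\Delta^{*}$) while $x_{j'}\in F_j$ for $j'\neq j$ is exactly the missing justification behind the paper's ``$\ge$'' step.
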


\begin{proof} By the definition of ${\rm v}_{h}(I_{\Delta})$ and Theorem \ref{intersection}, we have
\begin{eqnarray*}
\mbox{\rm v}_h( I_{\Delta}) &=&\min \{\mbox{\rm v}_P( I_{\Delta}) \,:\, {\rm ht} P=h\}\\
& \ge & n-h -\max \{ |\cap_{i=1}^{h} F_i | \,:\, F_1,F_2, \ldots, F_h  \in \mathcal {F}(\Delta^{*})\mbox{ are distinct} \} \\
& = & \min \{ |\cup_{i=1}^{h} (V\setminus F_i) | \,:\, F_1,F_2, \ldots, F_h  \in \mathcal {F}(\Delta^{*})\mbox{ are distinct}\} -h,
\end{eqnarray*}
which completes the proof. 
\end{proof}

\begin{cor}
Let $\Delta$ be a  pure simplicial complex with ${\rm ht} I_{\Delta}=h\ge 1$. If $\mbox{\rm v}( I_{\Delta})=\indeg I_{\Delta}-1$, then $\beta_{i, \indeg  I_{\Delta}+i-1}(S/I_{\Delta})\ne 0$ for $1\le i \le h$.
\end{cor}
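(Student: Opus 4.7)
Set $d := \indeg I_{\Delta}$. The plan is to exhibit, for each $i$ with $1\le i\le h$, a subset $W_i\subset V$ of cardinality $d+i-1$ for which $\widetilde{H}_{d-2}(\Delta_{W_i}; k)\neq 0$; Hochster's formula
$$\beta_{i,d+i-1}(S/I_{\Delta}) \;=\; \sum_{|W|=d+i-1}\dim_k\widetilde{H}_{d-2}(\Delta_{W};k)$$
will then give the desired nonvanishing. To build $W_i$, I first exploit purity: every associated prime of $I_{\Delta}$ has height $h$, so $\mbox{\rm v}(I_{\Delta})=\mbox{\rm v}_{P}(I_{\Delta})=d-1$ for some such $P$, which I relabel as $P=(x_1,\ldots,x_h)$ (with corresponding facet $U:=\{x_{h+1},\ldots,x_n\}\in\mathcal{F}(\Delta)$). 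Theorem~\ref{intersection} then produces facets $F_1,\ldots,F_h$ of $\Delta^{*}$ with $\{x_1,\ldots,x_h\}\setminus\{x_j\}\subset F_j$ and $|\bigcap_{j=1}^h F_j|=n-h-d+1$. The complements $M_j:=V\setminus F_j$ are minimal non-faces of $\Delta$ with $M_j\cap\{x_1,\ldots,x_h\}\subseteq\{x_j\}$; a short cardinality argument, using $|M_j|\ge d$, $|\bigcup_j M_j|=d+h-1$, and the fact that $x_j\in M_j$ (otherwise $M_j\subseteq U\in\mathcal{F}(\Delta)$, contradicting that $M_j$ is a non-face), forces $M_j=\{x_j\}\cup N$ for a common $(d-1)$-subset $N\subset U$.

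Fix $1\le i\le h$ and set $W_i:=\{x_1,\ldots,x_i\}\cup N$, of size $d+i-1$. Applying Alexander duality on the vertex set $W_i$ (the case of Theorem~\ref{LAD} in which the ambient complex is $\Delta_{W_i}$ and one takes the ``$W$'' in that theorem to be $W_i$) gives
$$\widetilde{H}_{d-2}(\Delta_{W_i};k)\;\cong\;\widetilde{H}_{i-2}(L;k),\qquad L:=(\Delta_{W_i})^{*}.$$
So it suffices to show $\widetilde{H}_{i-2}(L;k)\neq 0$. The heart of the argument is the following dichotomy for $L$. First, $L$ has no face of size $\ge i$: any such $F\subset W_i$ would have $|W_i\setminus F|\le d-1<\indeg I_{\Delta}$, so $W_i\setminus F$ would be a face of $\Delta$ and $F\notin L$. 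Second, every proper subset $F\subsetneq\sigma:=\{x_1,\ldots,x_i\}$ lies in $L$: choosing any $x_j\in\sigma\setminus F$ gives $M_j=\{x_j\}\cup N\subseteq W_i\setminus F$, so $W_i\setminus F\notin\Delta$ and $F\in L$. Combining these, $\partial\sigma$ is a nonzero $(i-2)$-cycle in $L$, yet $L$ contains no $(i-1)$-chain to bound it; thus $[\partial\sigma]\neq 0$ in $\widetilde{H}_{i-2}(L;k)$.

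The main obstacle I anticipate is not any single technical step but the choice of viewpoint: recognizing that Theorem~\ref{intersection} can be repackaged as the existence of a common $(d-1)$-subset $N$ shared by all $h$ minimal non-faces $M_j$, and then pivoting to the Alexander dual $L=(\Delta_{W_i})^{*}$ where the $\indeg$-hypothesis manifests topologically as the vanishing of $C_{i-1}(L)$, ensuring the survival of the Koszul-type cycle $\partial\sigma$. Once this viewpoint is in place, everything else is immediate.
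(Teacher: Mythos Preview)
Your proof is correct. The paper states this corollary without proof, so there is no argument to compare against directly; your approach is a natural way to fill the gap given the corollary's placement immediately after Theorem~\ref{intersection}.

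The cardinality step is the essential extraction: from $|\bigcup_j M_j|=n-|\bigcap_j F_j|=h+d-1$ together with $x_j\in M_j\subseteq\{x_j\}\cup U$ and $|M_j|\ge d$, one is forced into $M_j=\{x_j\}\cup N$ for a single $(d-1)$-set $N\subset U$. From there your passage to the Alexander dual $L=(\Delta_{W_i})^{*}$ is a clean way to finish: the hypothesis $\indeg I_\Delta=d$ forces $C_{i-1}(L)=0$, while the non-faces $M_1,\dots,M_i$ guarantee $\partial\sigma\subset L$, so the cycle $\partial\sigma$ cannot bound. The boundary case $i=1$ also goes through, since $W_1=M_1\notin\Delta$ gives $\emptyset\in L$ and hence $\widetilde H_{-1}(L;k)\neq 0$. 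One could alternatively argue directly in $\Delta_{W_i}$ (which has full $(d-2)$-skeleton), but the dual viewpoint you chose avoids having to control the unknown $(d-1)$-faces of $\Delta_{W_i}$ and is the more transparent route.
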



\section{the {\rm v}-numbers of Stanley--Reisner ideals with pure height two}

In this section, we study the {\rm v}-number of the Stanley--Reisner ideal with height$\leq 2$. First, we describe the {\rm v}-number of the Stanley--Reisner ideal with height 1. 
\begin{prop}\label{height1}
Let $\Delta$ be a simplicial complex such that ${\rm ht} I_{\Delta}=1$. Then
$$\mbox{\rm v}( I_{\Delta})=\min \{j \,:\, \beta_{1,j}(S/ I_{\Delta})\ne 0\}-1.$$
\end{prop}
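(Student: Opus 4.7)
The plan is to observe that $\min\{j:\beta_{1,j}(S/I_\Delta)\neq 0\}=\indeg I_\Delta$ (since $\beta_{1,j}(S/I_\Delta)$ counts the minimal generators of $I_\Delta$ of degree $j$), so the statement is equivalent to $\mbox{\rm v}(I_\Delta)=\indeg I_\Delta-1$. The inequality $\mbox{\rm v}(I_\Delta)\ge\indeg I_\Delta-1$ holds for every squarefree monomial ideal, irrespective of height: if $(I_\Delta:f)=P\in\Ass(S/I_\Delta)$ and $x_i\in P$, then $fx_i$ is a nonzero element of the monomial ideal $I_\Delta$, so every monomial appearing in $fx_i$ must be divisible by some element of $\mathcal G(I_\Delta)$ and hence have degree at least $\indeg I_\Delta$. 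Thus $\deg f\ge\indeg I_\Delta-1$, and taking the minimum over $f$ and $P$ yields the desired lower bound.

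For the reverse inequality I would invoke the assumption ${\rm ht}\,I_\Delta=1$, which guarantees a height-one associated prime $P$; after relabelling I may take $P=(x_1)$, and by Proposition \ref{A-dualideal} this is equivalent to $V\setminus\{x_1\}\in\mathcal F(\Delta)$. Applying Theorem \ref{intersection} with $h=1$, the constraint $V|_{[1]\setminus\{1\}}\subset F_1$ becomes vacuous, so the formula collapses to
$$\mbox{\rm v}_{(x_1)}(I_\Delta)=n-1-\max\{|F_1|\,:\,F_1\in\mathcal F(\Delta^{*})\}.$$
Applying Proposition \ref{A-dualideal} to $\Delta^{*}$ in place of $\Delta$ identifies the facets of $\Delta^{*}$ with the complements $V\setminus N$ of the minimal non-faces $N$ of $\Delta$ (facets are pairwise incomparable, so the resulting monomial generators are minimal); hence $\max\{|F_1|:F_1\in\mathcal F(\Delta^{*})\}=n-\indeg I_\Delta$. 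Substituting gives $\mbox{\rm v}_{(x_1)}(I_\Delta)=\indeg I_\Delta-1$, and therefore $\mbox{\rm v}(I_\Delta)\le\indeg I_\Delta-1$.

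Combining the two bounds yields $\mbox{\rm v}(I_\Delta)=\indeg I_\Delta-1=\min\{j:\beta_{1,j}(S/I_\Delta)\neq 0\}-1$, as required. The argument is essentially immediate given Theorem \ref{intersection}, and I do not foresee any real obstacle; the only delicate bookkeeping step is the Alexander-dual identification $\mathcal F(\Delta^{*})=\{V\setminus N\,:\,N\mbox{ a minimal non-face of }\Delta\}$, which is what converts the combinatorial quantity $\max|F_1|$ into the algebraic quantity $n-\indeg I_\Delta$.
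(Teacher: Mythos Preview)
Your proof is correct, but the paper's argument is more direct and does not go through Theorem~\ref{intersection}. The paper observes that since $(x_1)\in\Ass(S/I_\Delta)$ one has $I_\Delta\subseteq(x_1)$, so every minimal generator is of the form $x_1 m'$; taking $x_1 m$ of minimal degree among these, one checks by hand that $I_\Delta:m=(x_1)$ (any monomial not divisible by $x_1$ times $m$ stays outside $I_\Delta$), giving $\mbox{\rm v}(I_\Delta)\le\deg m=\indeg I_\Delta-1$, while the reverse inequality is quoted from \cite[Lemma~3.17]{JV}. Your route---specializing Theorem~\ref{intersection} to $h=1$ and then reading off $\max\{|F_1|:F_1\in\mathcal F(\Delta^*)\}=n-\indeg I_\Delta$ via the Alexander-dual correspondence between facets of $\Delta^*$ and minimal non-faces of $\Delta$---is equally valid and has the virtue of exhibiting the height-$1$ case as a degenerate instance of the general Alexander-dual formula; the paper's approach is more self-contained and exhibits an explicit witness $m$.
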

\begin{proof}
We may assume that $(x_1)$ is an associated prime of $I_{\Delta}$.
Then we can take $x_1m\in \mathcal{G}(I_{\Delta})$ such that $\deg m \le \deg m'$ for any  $x_1m'\in \mathcal{G}(I_{\Delta})$. Then $I_{\Delta}: m= (x_1)$. Hence  ${\rm v}( I_{\Delta}) \le \deg m$. Also, since ${\rm v}( I_{\Delta})\ge \indeg I_{\Delta}-1=\deg m$ by \cite[Lemma 3.17]{JV}, we have ${\rm v}( I_{\Delta})=\deg m$.
\end{proof}
We give the relation the {\rm v}-number with the graded Betti number of the Stanley--Reisner ideal with height 2. 
\begin{thm}\label{2-nd betti}
Let $\Delta$ be a simplicial complex such that ${\rm ht} I_{\Delta}=2$. If there is an associated prime ideal $P \in \Ass S/I_{\Delta}$ with ${\rm ht}P=2$ such that ${\rm v}( I_{\Delta})={\rm v}_P(I_{\Delta})$, then $\beta_{2,2+{\rm v}( I_{\Delta})}(S/ I_{\Delta})\ne 0$.
\end{thm}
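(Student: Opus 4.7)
The plan is to combine Hochster's formula with the local Alexander duality theorem (Theorem \ref{LAD}) to reduce the non-vanishing of $\beta_{2,2+v}(S/I_\Delta)$ to a connectivity statement inside $\Delta^*$. Writing $v=\mbox{\rm v}(I_\Delta)=\mbox{\rm v}_P(I_\Delta)$ and (after relabelling) $P=(x_1,x_2)$, Hochster's formula gives
$$\beta_{2,2+v}(S/I_\Delta)=\sum_{W\subset V,\;|W|=2+v}\dim_k\widetilde{H}_{v-1}(\Delta_W;k),$$
so it suffices to exhibit one $W$ with $\widetilde{H}_{v-1}(\Delta_W;k)\ne 0$.

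The key preliminary step is the following. By Lemma \ref{cha_lem} there is $C\subset V\setminus\{x_1,x_2\}$ with $|C|=v$ and $I_\Delta:x_C=P$; hence $x_1x_C,x_2x_C\in I_\Delta$ but $x_C\notin I_\Delta$, so there exist $g_1=x_1x_{C_1}$ and $g_2=x_2x_{C_2}$ in $\mathcal{G}(I_\Delta)$ with $C_1,C_2\subset C$. I would first establish that $C_1\cup C_2=C$ by a minimality argument: if $c\in C\setminus(C_1\cup C_2)$, then $f':=x_C/x_c$ has degree $v-1$, and $g_1\mid x_1f'$ and $g_2\mid x_2f'$ give $P\subset I_\Delta:f'$, while $f'\mid x_C$ gives $I_\Delta:f'\subset I_\Delta:x_C=P$, forcing $I_\Delta:f'=P$ and contradicting $\mbox{\rm v}_P=v$. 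Setting $W=\{x_1,x_2\}\cup C$ and $T=V\setminus W$, the facets $F_i=V\setminus\supp(g_i)$ of $\Delta^*$ (Proposition \ref{A-dualideal}) satisfy $x_2\in F_1$, $x_1\in F_2$ and $F_1\cap F_2=T$, realising the maximum in Theorem \ref{intersection} (so $|T|=n-2-v$). Applying Theorem \ref{LAD} with $\Delta$ and $\Delta^*$ interchanged,
$$\widetilde{H}_{v-1}(\Delta_W;k)\;\cong\;\widetilde{H}_0(\link_{\Delta^*}(T);k),$$
so the goal becomes to show that the vertices $\{x_1\}$ and $\{x_2\}$ (both of which lie in $\link_{\Delta^*}(T)$, since $T\cup\{x_j\}\subset F_{3-j}\in\Delta^*$) belong to different connected components.

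For this I would argue by contradiction. Suppose they lie in the same component, and pick a shortest edge-path $x_1=z_0,z_1,\ldots,z_k=x_2$ in its $1$-skeleton; since $\{x_1,x_2\}$ is a minimal non-face of $\Delta^*$ (as $P\in\Ass S/I_\Delta$), $k\geq 2$. When $k=2$, any facets $G_1,G_2\in\mathcal{F}(\Delta^*)$ with $T\cup\{x_1,z_1\}\subset G_1$ and $T\cup\{z_1,x_2\}\subset G_2$ give a valid pair for $P$ in Theorem \ref{intersection} with $|G_1\cap G_2|\geq|T|+1$, contradicting the maximality of $|T|$. When $k\geq 3$, the shortest-path condition forces any facet $G_2\supset T\cup\{z_1,z_2\}$ to omit both $x_1$ and $x_2$, while the Key Lemma applied to the pairs $(g_1,x^{V\setminus G_1})$ and $(x^{V\setminus G_k},g_2)$ forces $z_1\in C_1$ and $z_{k-1}\in C_2$; a finer case analysis on the positions of the interior $z_i$ relative to $C_1,C_2$ then either produces a pair of facets for $P$ violating maximality, or exhibits a vertex $c\in C_2\setminus C_1$ yielding an associated prime $P'$ of $I_\Delta$ (possibly of height $>2$) with $\mbox{\rm v}_{P'}<v$, contradicting $v=\mbox{\rm v}(I_\Delta)$. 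The chief obstacle is this final case analysis for $k\geq 3$, in which one must correctly locate the associated prime $P'$ from the path data by identifying an appropriate minimal non-face of $\Delta^*$ inside $T\cup\{x_1,c\}$.
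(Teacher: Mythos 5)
Your reduction is correct and is in substance the same as the paper's: Hochster's formula plus Theorem \ref{LAD} reduce the claim to showing that $x_1$ and $x_2$ lie in different connected components of $\link_{\Delta^*}(T)$, where $T$ is the intersection of two facets of $\Delta^*$ realizing the maximum in Theorem \ref{intersection} for $P=(x_1,x_2)$. (Your construction of $T$ from a monomial $x_C$ with $I_\Delta:x_C=P$, including the clean minimality argument that $C_1\cup C_2=C$, is a correct but more laborious substitute for simply invoking Theorem \ref{intersection}, which is what the paper does.) Your $k=2$ case is exactly the paper's contradiction: two facets through $T\cup\{x_1,z_1\}$ and $T\cup\{z_1,x_2\}$ beat the maximality of $|T|$.

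The genuine gap is the case $k\ge 3$, which you yourself concede is unfinished: the ``finer case analysis'' is never carried out, the ``Key Lemma'' it leans on is never stated, and the set $T\cup\{x_1,c\}$ in which you propose to hunt for a minimal non-face is not the right one. The paper's decisive observation is that \emph{only the first three path vertices matter}, so no case split on $k$ is needed at all: by the shortest-path property, $T\cup\{z_0,z_2\}\notin\Delta^*$ while $T\cup\{z_0\},\,T\cup\{z_2\}\in\Delta^*$, so there is a minimal non-face $N=\{z_0,z_2\}\cup H$ of $\Delta^*$ with $H\subseteq T$; by Proposition \ref{A-dualideal} (applied to $(\Delta^*)^*=\Delta$), $N$ corresponds to a facet $V\setminus N$ of $\Delta$, i.e.\ $P'=(x_u:u\in N)\in\Ass S/I_\Delta$ is an associated prime of height $|H|+2$. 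Feeding Theorem \ref{intersection} the faces $T\cup\{z_0,z_1\}$, $T\cup\{z_1,z_2\}$ and $(T\setminus\{x_\ell\})\cup\{z_0,z_1,z_2\}$ for $x_\ell\in H$ (the paper asserts this last family lies in $\Delta^*$; if you adopt this route, that membership deserves its own verification), whose common intersection is $(T\setminus H)\cup\{z_1\}$, gives ${\rm v}_{P'}(I_\Delta)\le n-(|H|+2)-\bigl(|T|-|H|+1\bigr)=n-|T|-3={\rm v}(I_\Delta)-1$, contradicting the fact that ${\rm v}(I_\Delta)$ is the minimum of ${\rm v}_Q(I_\Delta)$ over \emph{all} associated primes $Q$ --- this is precisely where the hypothesis ${\rm v}(I_\Delta)={\rm v}_P(I_\Delta)$ earns its keep, since $P'$ may have height greater than $2$. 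You correctly intuited that the contradiction should come from a possibly higher-height associated prime with smaller v-number, but without the localization to $z_0,z_1,z_2$ and the minimal non-face inside $T\cup\{z_0,z_2\}$ your plan has no mechanism to produce $P'$, so as written the proposal does not prove the theorem.
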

\begin{proof}  
Since ${\rm ht}P=2$, we can write $P=(x_i, x_j)$. Also, by assumption, there are facets $F, F' \in \Delta^{*}$  such that  $x_i\in F$, $x_j\in F'$ and ${\rm v}( I_{\Delta})=\mbox{\rm v}_{(x_i, x_j)}( I_{\Delta})=n- | F \cap  F'|-2$. We show $\tilde{H}_0(\link_{\Delta^{*}}(G); k)\ne 0$, where we set $G=F \cap  F'$. To this end, it suffices to show that $x_i$ does not have a path to  $x_j$ in ${\rm link}_{\Delta^{*}}(G)$. We proceed by contradiction. Then, there exists a shortest path  $x_i=x_{p_0}, x_{p_1}, \dots, x_{p_m}=x_j$ from $x_i$ to  $x_j$, where $m\ge 2$. Since the path is the shortest, $\{x_{p_0}, x_{p_2}\} \not\in \link_{\Delta^{*}}(G)$. This implies that there exists a face $H\subset G$ such that $x_{p_0}x_{p_2}\prod_{x_{\ell}\in H}x_{\ell}$ is a minimal generator of   $I_{\Delta^{*}}$. Hence we have 
$$\{(G\cup\{x_{p_{1}},x_{p_{2}}\}), (G\cup\{x_{p_{1}},x_{p_{0}}\})\}\cup\{(G\setminus\{x_{\ell}\})\cup\{x_{p_{0}},x_{p_{1}},x_{p_{2}}\}\,\,: x_{\ell}\in H\}\subset\Delta^{*}.$$
Notice that these intersections are $G\cup \{x_{p_1}\} \setminus H$. 
This means that, by Theorem \ref{intersection}, 
\begin{eqnarray*}
\mbox{\rm v}_{|H|+2}( I_{\Delta}) &\le&  n-|G\cup \{x_{p_1}\} \setminus H|-(|H|+2)\\
&=&n-|G|-3\\
&=&\mbox{\rm v}( I_{\Delta})-1,
\end{eqnarray*}
which  is a contradiction. Thus, by Theorem \ref{LAD} , we have $$0\ne \tilde{H}_0(\link_{\Delta^{*}}(G); k) \cong \tilde{H}_{|V\setminus G|-3}(\Delta_{V\setminus G}; k).$$ Then  by  Hochster's formula on graded betti numbers, we obtain that $$0\ne \dim_k\tilde{H}_{|V\setminus G|-3}(\Delta_{V\setminus G}; k)=\beta_{2,|V\setminus G|}(S/I_{\Delta}).$$ Therefore $\beta_{2,2+{\rm v}( I_{\Delta})}(S/I_{\Delta})\ne 0$.
\end{proof}
Thanks to Theorem \ref{2-nd betti}, we have the following corollary, which describes the {\rm v}-number of the Stanley--Reisner ideal with pure height two. 
\begin{cor}\label{pureheight2}
Let $\Delta$ be a pure simplicial complex such that ${\rm ht} I_{\Delta}=2$. Then $$\mbox{\rm v}( I_{\Delta})=\min \{j \,\,: \beta_{2,2+j}(S/ I_{\Delta})\ne 0\}.$$
\end{cor}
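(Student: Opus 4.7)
The lower bound $\mbox{\rm v}(I_{\Delta}) \ge \min\{j : \beta_{2,2+j}(S/I_{\Delta}) \ne 0\}$ follows at once from Theorem~\ref{2-nd betti}: since $\Delta$ is pure with $\height I_{\Delta}=2$, every associated prime of $I_{\Delta}$ has height exactly two, so any $P \in \Ass(S/I_{\Delta})$ realizing $\mbox{\rm v}(I_{\Delta}) = \mbox{\rm v}_{P}(I_{\Delta})$ automatically satisfies the hypothesis of Theorem~\ref{2-nd betti}, giving $\beta_{2,\,2+\mbox{\rm v}(I_{\Delta})}(S/I_{\Delta}) \ne 0$.

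For the reverse inequality, set $j_{0} = \min\{j : \beta_{2,2+j}(S/I_{\Delta}) \ne 0\}$. By Hochster's formula one finds $W \subset V$ with $|W| = 2+j_{0}$ and $\widetilde{H}_{j_{0}-1}(\Delta_{W};k) \ne 0$, and local Alexander duality (Theorem~\ref{LAD}) applied with $i=2$ yields
$$\widetilde{H}_{0}(\link_{\Delta^{*}}(V \setminus W);k) \;\cong\; \widetilde{H}_{j_{0}-1}(\Delta_{W};k) \;\ne\; 0.$$
Write $G = V \setminus W$. Then $G \in \Delta^{*}$ (otherwise $W \in \Delta$ and $\Delta_{W}$ would be a full simplex with vanishing reduced homology), and $\link_{\Delta^{*}}(G)$ is disconnected. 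Pick vertices $x_{1}, x_{2}$ in two different connected components, so that $G \cup \{x_{1}\},\, G \cup \{x_{2}\} \in \Delta^{*}$ but $G \cup \{x_{1},x_{2}\} \notin \Delta^{*}$.

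The crux is converting this disconnectedness into input for Theorem~\ref{intersection}. Since $\Delta$ is pure with $\height I_{\Delta}=2$, every facet of $\Delta$ has size $n-2$, so by Proposition~\ref{A-dualideal} every minimal non-face of $\Delta^{*}$ has size $2$; in other words $\Delta^{*}$ is a flag complex. Consequently, the only possible $2$-element non-face inside $G \cup \{x_{1},x_{2}\}$ is $\{x_{1},x_{2}\}$ itself, whence $\{x_{1},x_{2}\}$ is a minimal non-face of $\Delta^{*}$ and $P := (x_{1},x_{2})$ is an associated prime of $I_{\Delta}$. Extend $G \cup \{x_{2}\}$ (resp.\ $G \cup \{x_{1}\}$) to a facet $F_{1}$ (resp.\ $F_{2}$) of $\Delta^{*}$; flagness forces $x_{1} \notin F_{1}$ and $x_{2} \notin F_{2}$, while $G \subseteq F_{1} \cap F_{2}$ gives $|F_{1} \cap F_{2}| \ge |G| = n-2-j_{0}$. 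Theorem~\ref{intersection} applied to $P = (x_{1},x_{2})$ then yields
$$\mbox{\rm v}(I_{\Delta}) \;\le\; \mbox{\rm v}_{P}(I_{\Delta}) \;\le\; n-2-|F_{1} \cap F_{2}| \;\le\; j_{0},$$
which combines with the first paragraph to give equality.

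The main obstacle is extracting from the mere disconnectedness of $\link_{\Delta^{*}}(G)$ both a \emph{height-two} associated prime and a pair of facets of $\Delta^{*}$ witnessing an intersection of size at least $|G|$; this is precisely where the purity hypothesis on $\Delta$, in the form of the flag structure of $\Delta^{*}$, is indispensable—it simultaneously guarantees that $\{x_{1},x_{2}\}$ is a minimal non-face and prevents $x_{1}$ (resp.\ $x_{2}$) from accidentally belonging to $F_{1}$ (resp.\ $F_{2}$).
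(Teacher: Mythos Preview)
Your proof is correct and follows essentially the same route as the paper. Both arguments use Theorem~\ref{2-nd betti} for one inequality and, for the other, combine Hochster's formula with local Alexander duality to pass to $\tilde H_0$ of a link in $\Delta^{*}$, then exploit the flag structure of $\Delta^{*}$ (coming from purity and $\height I_\Delta=2$) together with Theorem~\ref{intersection}; the only difference is that you argue directly (from $\beta_{2,2+j_0}\ne 0$ produce a prime $P$ with $\mbox{\rm v}_P\le j_0$), whereas the paper argues the contrapositive (for $|G|>n-\mbox{\rm v}(I_\Delta)-2$ show $\link_{\Delta^{*}}(G)$ is a simplex, hence $\beta_{2,|V\setminus G|}=0$).
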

\begin{proof}  
Let $P\in{\rm Ass}S/I_{\Delta}$ with ${\rm v}(I_{\Delta})={\rm v}_{P}(I_{\Delta})$. Then $\beta_{2,2+{\rm v}(I_{\Delta})}(S/I_{\Delta})\neq 0$ by Theorem \ref{2-nd betti}. Now, for any $G\in\Delta^{*}$ with $|G|>n-{\rm v}(I_{\Delta})-2$, $\link_{\Delta^{*}}(G)$ is $\{\emptyset\}$ or a simplex , since any two vertices make a 1-face and $\link_{\Delta^{*}}(G)$  is a flag complex. Thus, $ \tilde{H}_0(\link_{\Delta^{*}}(G); k)=0$. Therefore, by Theorem \ref{LAD}, we have $$0=\tilde{H}_0(\link_{\Delta^{*}}(G); k) \cong \tilde{H}_{|V\setminus G|-3}(\Delta_{V\setminus G}; k).$$ Then  by  Hochster's formula on graded betti numbers, we obtain that $$0=\dim_k \tilde{H}_{|V\setminus G|-3}(\Delta_{V\setminus G}; k)=\beta_{2, |V\setminus G|}(S/I_{\Delta}).$$
\end{proof}
The following corollaries follow immediately from Corollary \ref{pureheight2}.
\begin{cor}
Let $\Delta$ be a pure simplicial complex such that ${\rm ht} I_{\Delta}=2$ and set 
$$p=\min \{\deg \lcm(m,m')\,\: m, m'\in \mathcal{G}(I_{\Delta}), m \ne m'\}$$
Then $\mbox{\rm v}( I_{\Delta})=p-2$.
\end{cor}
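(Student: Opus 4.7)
The plan is to reduce immediately to Corollary~\ref{pureheight2}, which, under the pure height-two hypothesis, identifies
$$
\mbox{\rm v}(I_\Delta) \;=\; \min\{\,j : \beta_{2,\,2+j}(S/I_\Delta) \ne 0\,\}.
$$
It therefore suffices to establish the purely combinatorial identity
$$
\min\{\,d : \beta_{2,d}(S/I_\Delta) \ne 0\,\} \;=\; p,
$$
since then the minimal admissible $j$ is $p-2$, giving $\mbox{\rm v}(I_\Delta)=p-2$.

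For this identity I would analyze the first syzygy module $Z_1=\ker(S^{|\mathcal{G}(I_\Delta)|}\twoheadrightarrow I_\Delta)$. Because $I_\Delta$ is a monomial ideal, Taylor's (generally non-minimal) resolution shows that $Z_1$ is generated as an $S$-module by the Koszul syzygies
$$
\tau_{m,m'} \;=\; \frac{m'}{\gcd(m,m')}\, e_m \;-\; \frac{m}{\gcd(m,m')}\, e_{m'},
$$
one for each unordered pair $m\ne m'$ in $\mathcal{G}(I_\Delta)$, with $\deg\tau_{m,m'}=\deg\lcm(m,m')$. Consequently $(Z_1)_d=0$ whenever $d<p$, and therefore $\beta_{2,d}(S/I_\Delta)=0$ in that range.

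For the reverse direction, pick any pair $m_0,m_0'\in\mathcal{G}(I_\Delta)$ realizing $\deg\lcm(m_0,m_0')=p$. Then $\tau_{m_0,m_0'}$ is a nonzero element of $(Z_1)_p$. Since $(Z_1)_{p-1}=0$, the degree-$p$ part of $\mathfrak{m} Z_1$ is also zero, so $\tau_{m_0,m_0'}$ represents a nonzero class in $(Z_1/\mathfrak{m} Z_1)_p$, giving $\beta_{2,p}(S/I_\Delta)\ne 0$. Combining the two bounds yields the required identity, and hence $\mbox{\rm v}(I_\Delta)=p-2$.

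I do not anticipate a genuine obstacle: the only subtle point is verifying that $\tau_{m_0,m_0'}$ remains a minimal generator of $Z_1$, which is immediate from $(Z_1)_{p-1}=0$. A direct approach starting from Theorem~\ref{intersection} is also feasible — for an associated prime $P=(x_a,x_b)$ one computes $\mbox{\rm v}_P(I_\Delta)=\min\{\deg\lcm(m,m')\}-2$ over an appropriate family of generator pairs — but this requires an extra combinatorial check that for some optimal pair there is a compatible associated prime $(x_a,x_b)\in\Ass S/I_\Delta$. Invoking Corollary~\ref{pureheight2} sidesteps this issue entirely.
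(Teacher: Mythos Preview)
Your argument is correct and follows the same route the paper takes: the corollary is stated immediately after Corollary~\ref{pureheight2} with no separate proof, so the intended derivation is exactly the reduction you make. You have simply made explicit the standard fact, via the Taylor complex, that for a monomial ideal the least degree in which $\beta_{2,d}(S/I)$ is nonzero equals $\min\{\deg\lcm(m,m'):m\ne m'\in\mathcal{G}(I)\}$; the paper leaves this implicit.
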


\begin{cor}\label{level}
Let $\Delta$ be a  Cohen--Macaulay simplicial complex with ${\rm ht} I_{\Delta}=2$. Then we have $\mbox{\rm v}( I_{\Delta})= \reg(S/ I_{\Delta})$  if and only if  $\Delta$ is level.
\end{cor}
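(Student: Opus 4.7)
The plan is to combine Corollary \ref{pureheight2} with the well-known characterization of the level property in terms of the top row of the graded Betti table, using the fact that the Cohen--Macaulay hypothesis pins the regularity to that same last row.

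First, observe that by the Auslander--Buchsbaum formula the Cohen--Macaulay assumption with $\mathrm{ht}\, I_\Delta = 2$ yields $\mathrm{pd}_S(S/I_\Delta)=2$, so the minimal free resolution of $S/I_\Delta$ has length $2$, and the only nonzero Betti numbers at homological degree $2$ appear. Next, I would invoke the standard duality (via $\omega_{S/I_\Delta}\cong \Ext^2_S(S/I_\Delta,S(-n))$) between the tail of the minimal free resolution of $S/I_\Delta$ and the generators of the canonical module: a shift $S(-j)$ appears in the last module $F_2$ of the resolution of $S/I_\Delta$ exactly when $\omega_{S/I_\Delta}$ has a minimal generator in degree $n-j$. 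Consequently the $a$-invariant satisfies $a(S/I_\Delta)=\max\{j:\beta_{2,j}(S/I_\Delta)\ne 0\}-n$, and the Cohen--Macaulay identity $\reg(S/I_\Delta)=a(S/I_\Delta)+\dim(S/I_\Delta)$ (together with $n=\dim(S/I_\Delta)+2$) gives
\[
\reg(S/I_\Delta)=\max\{j:\beta_{2,2+j}(S/I_\Delta)\ne 0\}.
\]

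Now apply Corollary \ref{pureheight2}, which says
\[
\mathrm{v}(I_\Delta)=\min\{j:\beta_{2,2+j}(S/I_\Delta)\ne 0\}.
\]
Since the set $\{j:\beta_{2,2+j}(S/I_\Delta)\ne 0\}$ is nonempty, we have $\mathrm{v}(I_\Delta)\le \reg(S/I_\Delta)$, with equality if and only if this set is a singleton, i.e., all the top graded Betti numbers $\beta_{2,j}(S/I_\Delta)$ are concentrated in a single degree.

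Finally, recall that the level property for a Cohen--Macaulay ring $k[\Delta]$ is precisely the statement that the canonical module $\omega_{k[\Delta]}$ is generated in a single degree, which by the duality above translates exactly to the condition that the shifts appearing in $F_2$ are all equal, i.e., $\beta_{2,j}(S/I_\Delta)\ne 0$ for only one value of $j$. This matches the condition just obtained for $\mathrm{v}(I_\Delta)=\reg(S/I_\Delta)$, completing the equivalence. The only delicate point is the Cohen--Macaulay identity $\reg(S/I_\Delta)=\max\{j-2:\beta_{2,j}\ne 0\}$; everything else is an immediate consequence of Corollary \ref{pureheight2} and the standard dictionary between top Betti numbers and the canonical module.
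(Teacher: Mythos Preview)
Your proof is correct and follows the same approach as the paper: both arguments invoke Corollary~\ref{pureheight2} and then identify the equality $\mbox{\rm v}(I_\Delta)=\reg(S/I_\Delta)$ with the condition that the top Betti numbers $\beta_{2,j}(S/I_\Delta)$ are concentrated in a single internal degree, which is precisely the level property. The paper's proof is a one-line remark that takes these standard facts for granted, while you have carefully unpacked why $\reg(S/I_\Delta)=\max\{j:\beta_{2,2+j}\ne 0\}$ in the Cohen--Macaulay case and why levelness is equivalent to the tail of the resolution being pure.
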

\begin{proof}
In this case, $\min \{j \,\,: \beta_{2,2+j}(S/ I_{\Delta})\ne 0\}={\rm reg}\hspace{0.05cm}{S/I_{\Delta}}$ if and only if $\Delta$ is level. Thus, the assertion follows by Corollary \ref{pureheight2}. 
\end{proof}
We reprove the Saha's main result in \cite[Theorem 3.8]{S} from Corollary \ref{pureheight2}. 
\begin{cor}\cite[Theorem 3.8]{S}\label{regularity}
Let $\Delta$ be a pure simplicial complex such that ${\rm ht} I_{\Delta}=2$. Then $\mbox{\rm v}( I_{\Delta}) \le \reg(S/ I_{\Delta})$.
\end{cor}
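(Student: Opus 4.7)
The plan is to derive the inequality directly from Corollary \ref{pureheight2} together with the definition of Castelnuovo--Mumford regularity. Recall that for a finitely generated graded $S$-module $M$, one has
$$\reg(M) = \max\{\,j-i \,:\, \beta_{i,j}(M) \ne 0\,\}.$$
So the strategy is to exhibit a single graded Betti number of $S/I_{\Delta}$ that witnesses the inequality.

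First, I would let $j_0 = \mbox{\rm v}(I_{\Delta})$ and invoke Corollary \ref{pureheight2} to write
$$j_0 = \min\{\, j \,:\, \beta_{2,\,2+j}(S/I_{\Delta}) \ne 0\,\}.$$
In particular, $\beta_{2,\,2+j_0}(S/I_{\Delta}) \ne 0$. Applying the definition of regularity to the pair $(i,j) = (2,\,2+j_0)$ gives
$$\reg(S/I_{\Delta}) \;\ge\; (2 + j_0) - 2 \;=\; j_0 \;=\; \mbox{\rm v}(I_{\Delta}),$$
which is exactly the claimed inequality.

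There is essentially no obstacle: once Corollary \ref{pureheight2} is available, the statement is immediate from the very definition of regularity, since the v-number is realized by a non-vanishing second syzygy in the right homological position. The work has already been carried out in proving Theorem \ref{2-nd betti} and Corollary \ref{pureheight2}; this corollary is just their payoff.
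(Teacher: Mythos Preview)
Your proof is correct and is exactly the approach the paper intends: the paper states Corollary \ref{regularity} without an explicit proof, remarking only that it follows from Corollary \ref{pureheight2}, which is precisely the derivation you carried out.
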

The next example shows that the conclusion  of the above corollary does not hold in pure height 3 case.
\begin{exam}[{\bf The 6-vertex  triangulation of the real projective plane}] \label{RealP}
Let $\Delta$ be  the 6-vertex triangulation of the real projective plane $\mathbb{P}^2(\RR)$.Then $\reg S/I_{\Delta}=2$ if the characteristic of  the base field is not 2. On the other hand, $\mbox{\rm v}( I_{\Delta})=3$. This follows from Theorem \ref{w2}, since $\Delta$ is a 2-dimensional 2-pure simplicial complex. Hence in the  height 3 case  $\mbox{\rm v}( I_{\Delta}) \le \reg(S/ I_{\Delta})$ does not hold in general.
\end{exam}

\begin{thm}\label{sequentially CM}
Let $\Delta$ be a simplicial complex on the vertex set $[n]$ with ${\rm ht} I_{\Delta}=2$ such that its Alexander dual $\Delta^{*}$ is sequentially Cohen--Macaulay and let $F_1, F_2, \dots , F_m$ be the facets of $\Delta^{*}$ with $d_i=\dim F_i+1$. If $m\ge 2$ and  $d_1\ge d_2 \ge \cdots \ge d_m$, then we have $\mbox{\rm v}(I_{\Delta^{*}})=n-d_2-1$.
\end{thm}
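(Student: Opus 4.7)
The strategy is to apply Theorem~\ref{intersection} to the Alexander dual $\Delta^{*}$ (using $(\Delta^{*})^{*}=\Delta$) in order to translate $\mbox{\rm v}(I_{\Delta^{*}})$ into a combinatorial optimization about facets of $\Delta$, and then to exploit the sequentially Cohen--Macaulay hypothesis on $\Delta^{*}$ to pin down the extremum. Concretely, for each facet $F_{i}\in\mathcal{F}(\Delta^{*})$ of size $d_{i}$, the associated prime $P_{F_{i}}=(x_{j}:x_{j}\notin F_{i})$ has height $n-d_{i}$, and Theorem~\ref{intersection} gives
\[
\mbox{\rm v}_{P_{F_{i}}}(I_{\Delta^{*}})=d_{i}-\max\bigl\{|\cap_{j} G_{j}|:G_{j}\in\mathcal{F}(\Delta),\ (V\setminus F_{i})\setminus\{x_{j}\}\subset G_{j}\bigr\},
\]
with $j$ ranging over the $n-d_{i}$ variables in $P_{F_{i}}$; thus $\mbox{\rm v}(I_{\Delta^{*}})$ is the minimum of these numbers as $i$ varies.

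For the upper bound $\mbox{\rm v}(I_{\Delta^{*}})\le n-d_{2}-1$, I would work with the prime $P_{F_{1}}$ coming from the largest facet and construct explicit facets $G_{j}\in\mathcal{F}(\Delta)$ with $|\cap_{j} G_{j}|\ge d_{1}+d_{2}+1-n$. The condition ${\rm ht}\,I_{\Delta}=2$ (equivalently $\indeg I_{\Delta^{*}}=2$) produces facets of $\Delta$ of size $n-2$, namely the complements of size-$2$ minimal non-faces of $\Delta^{*}$. Choosing for each $x_{j}\in P_{F_{1}}$ one such $G_{j}$ so that all of them contain $F_{1}\cap F_{2}$ together with an additional vertex in $F_{2}\setminus F_{1}$ produces the desired common intersection, yielding $\mbox{\rm v}_{P_{F_{1}}}(I_{\Delta^{*}})\le d_{1}-(d_{1}+d_{2}+1-n)=n-d_{2}-1$.

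For the lower bound $\mbox{\rm v}(I_{\Delta^{*}})\ge n-d_{2}-1$, the sequentially Cohen--Macaulay hypothesis enters crucially. By the Herzog--Hibi correspondence, $\Delta^{*}$ being sequentially Cohen--Macaulay is equivalent to $I_{\Delta}$ being componentwise linear. Combining this with the local Alexander duality of Theorem~\ref{LAD} and Hochster's formula on graded Betti numbers, in the spirit of the arguments of Theorem~\ref{2-nd betti} and Corollary~\ref{pureheight2}, one controls which reduced simplicial homologies $\widetilde H_{\ast}(\link_{\Delta^{*}}G;k)$ can be nonzero for $G\in\Delta^{*}$. Translating back via Theorem~\ref{intersection} forbids the intersections $|\cap_{j} G_{j}|$ in the formula above from exceeding $d_{i}-(n-d_{2}-1)$ for any $i$, which gives the matching lower bound.

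The main obstacle I expect is the lower bound: extracting from sequential Cohen--Macaulayness precisely the quantity $n-d_{2}-1$, with the second-largest dimension $d_{2}$ playing a distinguished role. The reason $d_{2}$ (rather than some arbitrary $d_{i}$) appears is that it corresponds to the ``second layer'' of the pure-subcomplex filtration of $\Delta^{*}$, and this layer controls the relevant non-vanishing Betti number in the componentwise-linear resolution of $I_{\Delta}$. Making this quantitative bound sharp, and verifying that no other $F_{i}$ can do better, is the delicate part of the argument.
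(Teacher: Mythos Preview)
Your approach has a genuine gap, and part of the trouble is that the printed statement contains a typo: the conclusion should read $\mbox{\rm v}(I_{\Delta})=n-d_{2}-1$, not $\mbox{\rm v}(I_{\Delta^{*}})$. This is how the paper's own proof is written (it plugs the facets of $\Delta^{*}$ into the cover-ideal formula of Corollary~\ref{cover ideals}), and it is what the downstream Corollary~\ref{multi-whisker} needs, since there $\Delta^{*}=\Delta(G)$ and the conclusion concerns $\mbox{\rm v}(J(G))=\mbox{\rm v}(I_{\Delta})$. The literal claim about $\mbox{\rm v}(I_{\Delta^{*}})$ is in fact false: for the whisker graph $G$ on a triangle one has $n=6$, $d_{2}=3$, so $n-d_{2}-1=2$, whereas $I(G):x_{1}=(x_{2},x_{3},y_{1})$ shows $\mbox{\rm v}(I(G))=1$. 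So you were chasing the wrong quantity from the start.

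With the correct target $\mbox{\rm v}(I_{\Delta})$, the paper's argument is short and never touches facets of $\Delta$. Corollary~\ref{cover ideals} gives
\[
\mbox{\rm v}(I_{\Delta})=n-2-\max\{\,|F\cap F'|:F,F'\in\mathcal{F}(\Delta^{*}),\ F\neq F'\,\}.
\]
The trivial bound $|F_{i}\cap F_{j}|\le \min(d_{i},d_{j})-1\le d_{2}-1$ handles one inequality. For the other, the one idea you are missing is that \emph{sequential Cohen--Macaulayness of $\Delta^{*}$ makes every pure skeleton of $\Delta^{*}$ Cohen--Macaulay}; in particular the pure $(d_{2}-1)$-skeleton is Cohen--Macaulay and hence connected in codimension one. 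Since $F_{2}$ is a facet of that skeleton and the skeleton has at least one other facet (coming from $F_{1}$), there is a facet $E\neq F_{2}$ of the skeleton with $|E\cap F_{2}|=d_{2}-1$; then $E$ lies in some facet $F\neq F_{2}$ of $\Delta^{*}$, giving $|F\cap F_{2}|\ge d_{2}-1$. That is the entire proof.

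By contrast, you apply Theorem~\ref{intersection} on the dual side, try to build families $G_{j}\in\mathcal{F}(\Delta)$ for the upper bound, and invoke componentwise linearity, Hochster's formula, and local Alexander duality for the lower bound. None of this machinery is needed, and neither half of your sketch is close to complete: the upper-bound construction never verifies that facets $G_{j}$ with the prescribed containments exist, and the lower-bound paragraph names tools without producing any inequality tied to $d_{2}$. The paper bypasses all of this by staying on the $\Delta^{*}$ side and using only the strong connectedness of the relevant pure skeleton.
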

\begin{proof} 
Notice that  $|F_{i}\cap F_{j}|\le d_2-1$ for any facets $F_{i}, F_{j}$ with $i\neq j$. Since the pure skeleton  $(\Delta^{*})^{[d_2-1]}$ is Cohen--Macaulay,  $(\Delta^{*})^{[d_2-1]}$ is connected in codimension one, that is, there exists a facet $E$  of $(\Delta^{*})^{[d_2-1]}$ such that $|E\cap F_2|=d_2-1$. Hence, there exists  a facet $F$  of $\Delta^{*}$ such that $|F\cap F_2|=d_2-1$. Therefore, we have  $\mbox{\rm v}(I_{\Delta^{*}})=n-(d_2-1)-2=n-d_2-1$.
\end{proof}
Thanks to Theorem \ref{sequentially CM}, we obtain the result about the  {\rm v}-number of the cover ideal of a multi-whisker graph. 
\begin{cor}\label{multi-whisker}
Let $G_{0}$ be a graph on the vertex set $X_{[h]}$ and let $G=G_0[n_1, n_2, \dots , n_{h}]$ be the multi-whisker graph associated with $G_{0}$. Then we have $$\mbox{\rm v}( J(G))=h+\min\{n_1, n_2, \dots , n_{h}\}-2.$$
\end{cor}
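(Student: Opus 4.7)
The plan is to apply Corollary \ref{cover ideals}, which gives
\[
\mbox{\rm v}(J(G)) = |V(G)| - M - 2, \qquad M := \max\{|F \cap F'| : F, F' \in \mathcal{F}(\Delta(G)),\ F \neq F'\}.
\]
Since $|V(G)| = h + N$ with $N = n_1 + \cdots + n_h$, it suffices to prove $M = N - \min_i n_i$.

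First I would classify the maximal independent sets of $G = G_0[n_1, \ldots, n_h]$. For each independent set $B \subseteq X_{[h]}$ of $G_0$, set
\[
F_B = B \cup \{y_{j,k} : x_j \notin B,\ 1 \le k \le n_j\}.
\]
A direct inspection of the edge set of $G$ shows that $F_B$ is independent; that it is maximal, since for every $x_j \notin B$ the vertex $y_{j,1} \in F_B$ gives the whisker edge $\{x_j, y_{j,1}\}$; and conversely every maximal independent set $A$ of $G$ has this form, because $y_{j,k}$ is adjacent only to $x_j$, so $x_j \notin A$ forces $y_{j,k} \in A$ for all $k$ by maximality. Since $B = F_B \cap X_{[h]}$, the map $B \mapsto F_B$ is injective, so the facets of $\Delta(G)$ are in bijection with the independent sets of $G_0$.

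Second, since $B \subseteq X_{[h]}$ and the $y$-part of $F_B$ lies in $Y$, for independent $B \neq B'$ we compute
\[
|F_B \cap F_{B'}| = |B \cap B'| + \sum_{j : x_j \notin B \cup B'} n_j = |B \cap B'| + N - \sum_{j \in B \cup B'} n_j.
\]
Using the identity
\[
\sum_{j \in B \cup B'} n_j - |B \cap B'| = \sum_{j \in B \cap B'} (n_j - 1) + \sum_{j \in B \triangle B'} n_j,
\]
the hypothesis $n_j \ge 1$ together with $B \triangle B' \neq \emptyset$ yields $\sum_{j \in B \cup B'} n_j - |B \cap B'| \ge \min_i n_i$, so $|F_B \cap F_{B'}| \le N - \min_i n_i$.

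Finally, equality is attained by taking $B = \emptyset$ and $B' = \{x_{i_0}\}$, where $n_{i_0} = \min_i n_i$ (both are independent in $G_0$ since $G_0$ has no loops), which shows $M = N - \min_i n_i$. Substituting back yields $\mbox{\rm v}(J(G)) = (h + N) - (N - \min_i n_i) - 2 = h + \min_i n_i - 2$. The main obstacle is the sharp inequality in the third paragraph: the classification of the facets is essentially bookkeeping, but finding the decomposition that forces $\min_i n_i$ rather than a weaker quantity (such as $1$) to appear is where the argument really uses the whisker structure.
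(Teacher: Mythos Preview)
Your proof is correct and takes a genuinely different route from the paper. The paper deduces the corollary from Theorem~\ref{sequentially CM}: it invokes \cite[Theorem~6.1]{MPT} to get that $\Delta(G)$ is sequentially Cohen--Macaulay, identifies the two largest facet sizes of $\Delta(G)$ as $N$ and $N-\min_i n_i+1$, and then reads off $\mbox{\rm v}(J(G))$ from the formula $n-d_2-1$. In contrast, you bypass Theorem~\ref{sequentially CM} and the sequential Cohen--Macaulay machinery entirely: you classify the facets of $\Delta(G)$ explicitly as the sets $F_B$ indexed by independent sets $B$ of $G_0$, compute $|F_B\cap F_{B'}|$ by hand, and extract the sharp bound $N-\min_i n_i$ via the decomposition $\sum_{j\in B\cup B'} n_j - |B\cap B'| = \sum_{j\in B\cap B'}(n_j-1)+\sum_{j\in B\triangle B'} n_j$. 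Your argument is more elementary and entirely self-contained (it uses only Corollary~\ref{cover ideals} and basic combinatorics), whereas the paper's approach illustrates the strength of Theorem~\ref{sequentially CM} as a general tool but at the cost of importing the sequential Cohen--Macaulayness of multi-whisker graphs as a black box. One very minor omission: in verifying that $F_B$ is maximal you only explicitly block the vertices $x_j\notin B$; you should also note that each $y_{j,k}$ with $x_j\in B$ is adjacent to $x_j\in F_B$, though this is immediate.
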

\begin{proof} 
Notice that $G$ is sequentially Cohen--Macaulay, by \cite[Theorem 6.1]{MPT}. The largest facet of $\Delta(G)$ has dimension $n_1+n_2+\cdots +n_{h}$ and the second largest one has dimension $(n_1+n_2+\cdots +n_{h})-\min\{n_1, n_2, \dots , n_{h}\}+1$. Hence, by Theorem \ref{sequentially CM}, the assertion follows. 
\end{proof}
Moreover, we describe the {\rm v}-numbers of the cover ideal of  a very well-covered graph.
\begin{thm}\label{very well-covered}
Let $G=H(n_1, n_2, \dots , n_{h_0})$ be a very  well-covered graph with ${\rm ht}I(G)=h$, where $H$ is a Cohen--Macaulay very well-covered graph. Then we have $$\mbox{\rm v}( J(G))=h+\min\{n_1, n_2, \dots , n_{h_0}\}-2.$$
\end{thm}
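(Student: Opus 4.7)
The plan is to use Corollary \ref{cover ideals} directly: since $|V(G)|=2h$ for a very well-covered graph, proving the theorem reduces to showing
$$\max\{|F\cap F'| : F, F'\in \mathcal{F}(\Delta(G)),\ F\neq F'\}=h-\min\{n_{1},\ldots,n_{h_{0}}\}.$$
By Theorem \ref{structure} we may assume $G=H(n_{1},\ldots,n_{h_{0}})$ where $H$ is a Cohen--Macaulay very well-covered graph satisfying the structure condition $(\ast)$. Because $x_{i}y_{i}\in E(H)$ and every maximal independent set of $H$ has size $h_{0}$, each facet of $\Delta(H)$ contains exactly one of $\{x_{i},y_{i}\}$ for each $i$. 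First I would verify that the assignment $M\mapsto F_{M}$, where $F_{M}$ is obtained from $M$ by replacing each $x_{i}\in M$ with the block $\{x_{i,1},\ldots,x_{i,n_{i}}\}$ (and similarly for $y_{i}\in M$), gives a bijection $\mathcal{F}(\Delta(H))\to\mathcal{F}(\Delta(G))$, and that for any two distinct $M,M'\in\mathcal{F}(\Delta(H))$ we have $|F_{M}\cap F_{M'}|=h-\sum_{i\in D}n_{i}$, where $D$ is the set of indices on which $M$ and $M'$ disagree on the pair $\{x_{i},y_{i}\}$.

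With this reduction in hand, the upper bound $\max |F_{M}\cap F_{M'}|\leq h-\min_{i} n_{i}$ is immediate because $D\neq\emptyset$. The main obstacle is the matching lower bound: for the index $i_{0}$ attaining $\min_{i}n_{i}$, I must produce two facets of $\Delta(H)$ differing \emph{only} at index $i_{0}$. My plan is to combine two facts about $H$. First, Cohen--Macaulayness of $\Delta(H)$ implies connectedness in codimension one, so any two facets are joined by a chain of facets whose consecutive terms share a face of codimension one; by the one-per-pair structure, every such codimension-one adjacency is precisely a single-index flip $x_{i}\leftrightarrow y_{i}$. Second, well-coveredness of $H$ implies that every vertex lies in some maximum independent set (extend the singleton $\{v\}$), so both $x_{i_{0}}$ and $y_{i_{0}}$ appear as the chosen vertex at index $i_{0}$ in some facet of $\Delta(H)$.

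To finish, I take facets $M,M'$ of $\Delta(H)$ with $x_{i_{0}}\in M$ and $y_{i_{0}}\in M'$ and consider any flip-path from $M$ to $M'$ provided by codimension-one connectivity; since the choice value at index $i_{0}$ changes along the path, some consecutive pair on the path realizes a flip at exactly $i_{0}$, yielding the required pair of facets. This pair lifts to facets of $\Delta(G)$ whose intersection has size $h-n_{i_{0}}=h-\min_{i}n_{i}$, completing the lower bound. Corollary \ref{cover ideals} then gives $\mbox{\rm v}(J(G))=2h-(h-\min_{i}n_{i})-2=h+\min\{n_{1},\ldots,n_{h_{0}}\}-2$, as claimed. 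I should note that one cannot simply invoke Theorem \ref{sequentially CM} as in Corollary \ref{multi-whisker}: already for $G=K_{2,2}=H(2)$ with $H$ a single edge, $\Delta(G)$ is pure but disconnected, hence not (sequentially) Cohen--Macaulay, so the detour through $\Delta(H)$ together with the codimension-one connectivity argument is essential.
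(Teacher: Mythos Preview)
Your proposal is correct and follows essentially the same route as the paper: reduce to Corollary~\ref{cover ideals}, set up the block-replacement bijection $\mathcal{F}(\Delta(H))\to\mathcal{F}(\Delta(G))$, get the upper bound from $D\neq\emptyset$, and for the lower bound use Cohen--Macaulayness of $\Delta(H)$ to obtain a codimension-one chain between a facet containing $x_{i_0}$ and one containing $y_{i_0}$, then locate the step where the flip at index $i_0$ occurs. You are slightly more explicit than the paper in justifying why such facets exist and why every codimension-one adjacency is a single-index flip, and your closing remark that Theorem~\ref{sequentially CM} genuinely fails here (e.g.\ for $K_{2,2}$) is a worthwhile addition.
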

\begin{proof} 
First notice that $\mathcal{F}(\Delta(H))$ and  $\mathcal{F}(\Delta(G))$ have one-to-one correspondence. In fact, $F=\{z_1, z_2, \dots, z_{h_0}\}\in \mathcal{F}(H)$ corresponds to $\tilde{F}=Z_1 \cup Z_2 \cup \cdots \cup Z_{h_0} \in \mathcal{F}(\Delta(G))$, where $z_i\in \{x_i, y_i\}$ and   $X_i=\{x_{i1}, x_{i2}, \dots, x_{in_i}\}$,  $Y_i = \{y_{i1}, y_{i2}, \dots, y_{in_i}\}$ and  $Z_i  \in  \{X_i, Y_i\}$. Hence we see that $ |\tilde{F}\cap \tilde{F'}| \le h-\min\{n_1, n_2, \dots , n_{h_0}\}$ for any $F, F' \in  \mathcal{F}(\Delta(H))$. We suppose that $n_p=\min\{n_1, n_2, \dots , n_{h_0}\}$ and take two facets $F, F' \in \mathcal{F}(\Delta(H))$ such that $x_p\in F\setminus F^{\prime}$ ,  $y_p\in F'\setminus F$. Since $H$ is Cohen--Macaulay, $\Delta(H)$ is connected in codimension one in $H$, that is,  there exists a chain of facets $F=F_0, F_1, \dots , F_{\ell}=F'$ such that $ |F_{i-1}\cap F_{i}|=h_0-1$ for all $i=1,2, \dots , \ell$. Hence there exists $1\le q\le \ell$ such that   $F_{q-1}\setminus  F_{q}=x_p$, and thus we have  $ |\tilde{F}_{q-1}\cap \tilde{F}_{q}|=h-n_p$ in $G$. Therefore, we have  $\mbox{\rm v}( J(G))=2h- (h-n_p)-2=h+n_p-2$.
\end{proof}
In the rest of this section, we provide a counter-example and a correct version of  \cite[Proposition 4.1]{SS}. Before explaining this, we recall definitions of cliques and line graphs. Let $G$ be a graph. Then $C\subset V(G)$ is called a \textit{clique} if $\{ u, v \} \in E(G)$ for any $u, v \in C$ with $u \neq v$. Also, the \textit{line graph} $L(G)$ of $G$ is the graph whose vertex set is $E(G)$ and $\{ e, e^{\prime} \}\in E(L(G))$ if $e, e^{\prime}$ satisfy $e \cap e^{\prime} \neq \emptyset$. Saha and Sengupta stated the following proposition in \cite[Proposition 4.1]{SS}; 
\begin{prop}\label{mistake}\cite[Proposition 4.1]{SS}
Let $G$ be a graph and $L(G)$ be its line graph. Suppose that $c(L(G))$ denotes the minimum number of cliques in $L(G)$, such that any vertex of $L(G)$ is either a vertex of those cliques or adjacent to some vertices of those cliques. Then ${\rm v}(I(G))=c(L(G))$. 
\end{prop}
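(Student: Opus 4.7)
The plan is to translate both sides of the asserted equality into intrinsic combinatorial invariants of $G$ and then try to match them. First, by Lemma \ref{cha_lem} specialised to the edge ideal $I(G) = I_{\Delta(G)}$, the v-number ${\rm v}(I(G))$ is the minimum size of an independent set $A \subseteq V(G)$ such that $N_G(A)$ is a minimal vertex cover of $G$. A short check shows that when $A$ is independent, minimality of $N_G(A)$ is automatic: for any $w \in N_G(A)$ there is $a \in A$ with $\{w,a\} \in E(G)$, and since $a \notin N_G(A)$ (because $A$ is independent), the edge $\{w,a\}$ forces $w$ into any vertex cover contained in $N_G(A)$. So ${\rm v}(I(G))$ equals the minimum $|A|$ over independent $A$ with $N_G[A]$ a vertex cover of $G$.

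Next I would reformulate $c(L(G))$. Every maximal clique of $L(G)$ is either a star $S_w = \{e \in E(G) : w \in e\}$ for some $w \in V(G)$, or the three edges of a triangle of $G$. Any triangle clique on $\{x,y,z\}$ is dominated in $L(G)$ by the star $S_x$, since $\{y,z\} \subseteq N_G(x)$ implies $N_{L(G)}[S_x] \supseteq N_{L(G)}[\{xy,yz,zx\}]$; so a minimising family may be taken to consist of stars. A family $\{S_{w_1},\ldots,S_{w_k}\}$ dominates $L(G)$ precisely when $N_G[W]$ (with $W=\{w_1,\ldots,w_k\}$) is a vertex cover of $G$. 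Hence $c(L(G))$ is the minimum $|W|$ with $N_G[W]$ a vertex cover. With both sides in this form, the easy direction $c(L(G)) \leq {\rm v}(I(G))$ is immediate: an independent $A$ realising the v-number is itself a candidate $W$.

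The main obstacle is the reverse inequality ${\rm v}(I(G)) \leq c(L(G))$. Given an optimal $W$ with $N_G[W]$ a vertex cover, we would need to produce an independent set $A$ of the same cardinality with $N_G[A]$ still a vertex cover. If $W$ happens to be independent there is nothing to do; otherwise $W$ contains two adjacent vertices $u$ and $v$, and one has to replace $v$ by some non-neighbour of $u$ without destroying coverage of the edges hit only by $N_G[v]$. Optimality of $W$ rules out simply deleting $v$, and the local freedom needed to perform a safe substitution need not be present. This is precisely the step where the argument breaks, producing the counter-example promised in the text: there exist graphs whose minimum closed-neighbourhood cover is forced to use mutually adjacent centres with no independent substitution of the same size. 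Locating such a small graph would thus disprove the stated equality, and this pinpointed obstruction is what the authors repair in Proposition \ref{line graphs} by replacing the naive clique-domination count with a refined invariant extracted from a nerve complex, which retains the incidence data needed to encode the independence constraint intrinsic to the $\mathcal{A}_G$-formulation of the v-number.
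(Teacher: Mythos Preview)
Your write-up is not a proof of the proposition but a (correct) diagnosis of why it fails, and in that sense it agrees with the paper: Proposition~\ref{mistake} is quoted from \cite{SS} precisely in order to be refuted. Your reductions of the two sides are accurate---$\mathrm{v}(I(G))$ is the minimum size of an \emph{independent} set $A$ with $N_G[A]$ a vertex cover (this is really \cite[Theorem~3.5]{JV} rather than Lemma~\ref{cha_lem}, but the conclusion is the same), while your star-replacement argument shows $c(L(G))$ is the same minimum with the independence constraint dropped. This immediately gives $c(L(G))\le \mathrm{v}(I(G))$ and isolates the exact obstruction to equality.

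The paper, however, does not argue abstractly: its entire treatment of Proposition~\ref{mistake} is the explicit ten-vertex graph in the Example that follows, for which $c(L(G))=2$ but $\mathrm{v}(I(G))=3$. In your framework this amounts to finding $G$ with two \emph{adjacent} vertices whose closed neighbourhood is a vertex cover while no independent pair has this property; the paper's graph with $W=\{5,6\}$ is exactly such a witness. Your analysis predicts that such a graph must exist but stops short of exhibiting one, so the decisive content of the paper's refutation---the concrete counter-example---is missing from your account. Conversely, your conceptual reformulation of $c(L(G))$ as closed-neighbourhood domination is something the paper does not spell out, and it explains transparently \emph{why} the correction in Proposition~\ref{line graphs} (imposing disjointness on the facets of the nerve) succeeds.
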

We provide the counter-example to this statement. 
\begin{exam}
Let $G$ be a graph on the vertex set $[10]$ with the edge $e_{1} = \{ 1, 2 \}, e_{2} = \{ 2, 5 \}, e_{3} = \{ 4, 5 \}, e_{4} = \{ 3, 4 \}, e_{5} = \{ 5, 6 \}, e_{6} = \{ 6, 7 \}, e_{7} = \{ 7, 8 \}, e_{8} = \{ 6, 9 \}$ and $e_{9} = \{ 9, 10 \}$. Then $G$ and its line graph $L(G)$ are 

\begin{center}
\begin{tikzpicture}[scale=0.7]
\coordinate (1) at (-3,1);
\fill (1) circle (3pt); 
\path (1) node [above] {1};
\coordinate (2) at (-2,1);
\fill (2) circle (3pt); 
\path (2) node [above] {2};
\coordinate (3) at (-3,-1);
\fill (3) circle (3pt); 
\path (3) node [below] {3};
\coordinate (4) at (-2,-1);
\fill (4) circle (3pt); 
\path (4) node [below] {4};
\coordinate (5) at (-1,0);
\fill (5) circle (3pt); 
\path (5) node [above right] {5};
\coordinate (6) at (1,0);
\fill (6) circle (3pt); 
\path (6) node [above left] {6};
\coordinate (7) at (2,1);
\fill (7) circle (3pt); 
\path (7) node [above] {7};
\coordinate (8) at (3,1);
\fill (8) circle (3pt); 
\path (8) node [above] {8};
\coordinate (9) at (2,-1);
\fill (9) circle (3pt); 
\path (9) node [below] {9};
\coordinate (10) at (3,-1);
\fill (10) circle (3pt); 
\path (10) node [below] {10};
\draw [semithick](-3,1)--(-2,1);
\draw [semithick](-2,1)--(-1,0);
\draw [semithick](-1,0)--(-2,-1);
\draw [semithick](-2,-1)--(-3,-1);
\draw [semithick](-1,0)--(1,0);
\draw [semithick](1,0)--(2,1);
\draw [semithick](2,1)--(3,1);
\draw [semithick](1,0)--(2,-1);
\draw [semithick](2,-1)--(3,-1);
\draw (-4,0)node {$G=$};
\coordinate (1) at (8,1);
\fill (1) circle (3pt); 
\path (1) node [above] {$e_{1}$};
\coordinate (2) at (9,1);
\fill (2) circle (3pt); 
\path (2) node [above] {$e_{2}$};
\coordinate (3) at (9,-1);
\fill (3) circle (3pt); 
\path (3) node [below] {$e_{3}$};
\coordinate (4) at (8,-1);
\fill (4) circle (3pt); 
\path (4) node [below] {$e_{4}$};
\coordinate (5) at (10,0);
\fill (5) circle (3pt); 
\path (5) node [below=1mm] {$e_{5}$};
\coordinate (6) at (11,1);
\fill (6) circle (3pt); 
\path (6) node [above] {$e_{6}$};
\coordinate (7) at (12,1);
\fill (7) circle (3pt); 
\path (7) node [above] {$e_{7}$};
\coordinate (8) at (11,-1);
\fill (8) circle (3pt); 
\path (8) node [below] {$e_{8}$};
\coordinate (9) at (12,-1);
\fill (9) circle (3pt); 
\path (9) node [below] {$e_{9}$};
\draw [semithick](8,1)--(9,1);
\draw [semithick](9,1)--(9,-1);
\draw [semithick](9,-1)--(8,-1);
\draw [semithick](9,1)--(10,0);
\draw [semithick](9,-1)--(10,0);
\draw [semithick](10,0)--(11,1);
\draw [semithick](10,0)--(11,-1);
\draw [semithick](11,1)--(11,-1);
\draw [semithick](11,1)--(12,1);
\draw [semithick](11,-1)--(12,-1);
\draw (6,0)node {$L(G)=$};
\end{tikzpicture}
\end{center}

Then, since $c(L(G)) =2\neq 3={\rm v}(I(G))$, this is the counter-example for \cite[Proposition 4.1]{SS}. 
\end{exam}
The following proposition is a correct version of \cite[Proposition 4.1]{SS}, addressing the errors therein by using the notion of nerve complexes which is similar to the notion of line graphs. To this end, let us recall the definition of nerve complexes  of graphs. The \textit{nerve complex} of $G$, denoted by ${\rm Nerve}(G)$, is the simplicial complex on the vertex set $E(G)$ which face is an element $F$ of the power set $2^{E(G)}$ satisfying $\bigcap_{e \in F} e \neq \emptyset$.
\begin{prop}\label{line graphs}
Let $G$ be a graph with $E(G) = \{ e_1, \ldots, e_m \}$ and let $P$ be the set of $\Gamma$ which is a subset of $\mathcal{F}({\rm Nerve}(G))$ satisfying the following conditions;
\begin{enumerate}
\item for any elements $F$ and $F^{\prime}$ of $\Gamma$ with $F \neq F^{\prime}$, $F \cap F^{\prime} = \emptyset$,
\item any vertex $e$ of ${\rm Nerve}(G)$ satisfies either following (a) or (b) :
\begin{enumerate}
\item there exists an element $F$ of $\Gamma$ such that $e \in F$
\item there exist an element $F$ of $\Gamma$ and $e^{\prime} \in F$ such that $\{ e, e^{\prime} \} \in {\rm Nerve}(G)$.
\end{enumerate}
\end{enumerate}
Let $c({\rm Nerve}(G)) = {\rm min} \{ |\Gamma| \,\,: \Gamma \in P \}$.
Then ${\rm v}(I(G)) = c({\rm Nerve}(G))$.
\end{prop}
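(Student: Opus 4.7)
My plan is to give independent combinatorial descriptions of both sides of the claimed identity and then match them through a size-preserving swap argument.

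First I would apply Theorem~\ref{intersection} with $\Delta=\Delta(G)$. Here $I_{\Delta}=I(G)$ and the facets of $\Delta(G)^{*}$ are exactly $V(G)\setminus e$ for $e\in E(G)$. An associated prime of $I(G)$ has the form $P=(x_{1},\ldots,x_{h})$, where $\{x_{1},\ldots,x_{h}\}$ is a minimal vertex cover of $G$, and the constraint $V|_{[h]\setminus\{j\}}\subset F_{j}$ forces $F_{j}=V(G)\setminus e_{j}$ with $e_{j}\cap P=\{x_{j}\}$. Writing $y_{j}$ for the other endpoint of $e_{j}$ and using $|\bigcap_{j}F_{j}|=n-|\bigcup_{j}e_{j}|$, this yields
\[
v(I(G))=\min\bigl\{|Y|\,:\,Y\subseteq V(G)\text{ is independent and }N(Y)\text{ is a vertex cover of }G\bigr\},
\]
where $N(Y)$ denotes the set of vertices having a neighbor in $Y$.

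Next I would translate $c({\rm Nerve}(G))$. A facet of ${\rm Nerve}(G)$ has the form $N(v)=\{e\in E(G):v\in e\}$ for a vertex $v$ of $G$ whose open neighborhood is not properly contained in that of another vertex; equivalently, $v$ is not a leaf whose unique neighbor has degree at least $2$. If $\Gamma=\{N(v):v\in S\}$, condition (1) reads ``$S$ is independent in $G$'', while condition (2) reads ``every edge of $G$ has an endpoint in $S\cup N(S)$'', which--by independence of $S$--amounts to ``$N(S)$ is a vertex cover''. Hence
\[
c({\rm Nerve}(G))=\min\bigl\{|S|\,:\,S\text{ independent},\,N(S)\text{ is a VC},\,N(v)\in\mathcal{F}({\rm Nerve}(G))\,\forall v\in S\bigr\},
\]
and dropping the facet restriction gives $v(I(G))\le c({\rm Nerve}(G))$ immediately.

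For the converse I would start from a minimum $Y$ realizing $v(I(G))$ and, for each $y\in Y$ with $N(y)\notin\mathcal{F}({\rm Nerve}(G))$ (necessarily a leaf with unique neighbor $w_{y}$ of degree $\ge 2$), replace $y$ by $w_{y}$; call the resulting set $S$. The minimality of $Y$ forces (i) distinct bad $y$'s give distinct $w_{y}$'s, (ii) each $w_{y}\notin Y$, (iii) $w_{y}$ has no neighbor in $Y\setminus\{y\}$, and (iv) no two $w_{y},w_{y'}$ are adjacent in $G$, because in any failure case one could delete an element (or simultaneously swap out a conflicting pair) from $Y$ while keeping $N(\cdot)$ a vertex cover, contradicting $|Y|$ minimal. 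Consequently $|S|=|Y|$ and $S$ is independent, and $N(S)$ remains a vertex cover because any edge incident to $w_{y}$ has its other endpoint in $N(w_{y})\subseteq N(S)$; moreover $N(w_{y})$ is a facet since $\deg w_{y}\ge 2$. This gives $c({\rm Nerve}(G))\le|Y|=v(I(G))$.

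The hard part will be the swap step, specifically the joint verification of (i)--(iv) and the recheck that $N(S)$ is still a vertex cover; each of these ultimately reduces to a direct contradiction with $|Y|$ minimal, but case (iv), where $y$ and a conflicting $y'$ must be swapped out simultaneously, is the most delicate piece of bookkeeping.
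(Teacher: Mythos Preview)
Your plan is correct and parallels the paper's argument: both reduce $\mathrm v(I(G))$ to the minimum size of an independent set $Y$ with $N(Y)$ a vertex cover (the paper via \cite[Theorem~3.5]{JV}, you via Theorem~\ref{intersection}), and both read $c(\mathrm{Nerve}(G))$ as that same minimum with the added constraint that each $E_G(y)$ be a facet. The one substantive difference is in the direction $c(\mathrm{Nerve}(G))\le \mathrm v(I(G))$: the paper takes a minimizer $A\in\mathcal{A}_G$ and asserts $\{E_G(x_i):x_i\in A\}\in P$, but never checks that each $E_G(x_i)$ is actually a \emph{facet} of $\mathrm{Nerve}(G)$, which fails whenever $x_i$ is a leaf whose neighbor has degree $\ge 2$ (e.g.\ $A=\{1\}$ in the path $1\text{--}2\text{--}3$). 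Your swap argument --- replacing each such leaf $y$ by $w_y$ and deducing (i)--(iv) from the minimality of $|Y|$ --- is precisely what is needed to close this point, so your proof is in fact more complete than the paper's at that step. For the delicate case (iv), the set $Y'=(Y\setminus\{y,y'\})\cup\{w_y\}$ is independent by (iii), and $N(Y')=(N(Y)\setminus\{w_y\})\cup N(w_y)$ is still a vertex cover since any edge through $w_y$ is covered at its other endpoint and $w_{y'}\in N(w_y)$, giving the desired contradiction with $|Y|$ minimal.
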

\begin{proof}
Let $I = I(G)$. By \cite[Theorem 3.5]{JV}, we can take $A \in \mathcal{A}_{G}$ with $|A| = {\rm v}(I)$. For any $x_i$ in $A$, we set $E_{G}(x_i)=\{ e \in E(G) \,\,: x_i \in e \}$ and $E_{G}(A)=\{ E_{G}(x_i) \,\,: x_i \in A \}$. Then we get ${\rm v}(I) = |A| = |E_{G}(A)| \geq c({\rm Nerve}(G))$. Conversely, we take $\Gamma = \{F_1, \ldots, F_r \}$ an element of $P$ with $r = |\Gamma| = c({\rm Nerve}(G))$ and write $F_i = \{ e_{i_1}, \ldots, e_{i_{m_i}} \}$ for any $i$. Since $F_i$ is a facet of ${\rm Nerve}(G)$, we have $\bigcap_{j = 1}^{m_i} e_{i_j} \neq \emptyset$, and hence we can take $x_i \in \bigcap_{j = 1}^{m_i} e_{i_j}$. We set $B=\{ x_1, \ldots, x_r \}$. If there exist $x_i, x_j \in B$ with $i \neq j$ such that $\{ x_i, x_j \}\in E(G)$, then $\{ x_i, x_j \} \in F_i \cap F_j$, which is a contradiction. Thus $B$ is an independent set of $G$. We show that $N_{G}(B)$ is a vertex cover of $G$. Fix $e\in E(G)$. If $e$ belongs to $F_i$ for some $i$, then $e \cap N_{G}(B) \neq \emptyset$. We assume that $e \in E(G) \setminus \bigcap_{i = 1}^{r} F_i$. Then $e$ must satisfy the condition (b), so we can take $e^{\prime} \in F_i$ for some $i$ with $\{ e, e^{\prime} \} \in {\rm Nerve}(G)$, that is, $e \cap e^{\prime} \neq \emptyset$. Now, since $e^{\prime} \in F_i$, we can write $e^{\prime} = \{ x_i, w \}$ for some vertex $w$, and hence $w \in N_{G}(B)$.
 Since $e \in E(G) \setminus \bigcap_{i = 1}^{r} F_i$ and $e \cap e^{\prime} \neq \emptyset$, we have $w \in e$. Therefore, $N_{G}(B)$ is a vertex cover of $G$, and hence $B \in \mathcal{A}_{G}$ from \cite[Lemma  3.4(b)]{JV}, which leads to the conclusion. 
\end{proof}


 \section{An upper bound for the ${\rm v}_P$-numbers of Stanley--Reisner ideals} 
In this section, we discuss an upper bound for the ${\rm v}_P$-numbers. First, we provide an upper bound for the ${\rm v}_{P}$-numbers of the Stanley--Reisner ideals in terms of arithmetical degree. Here, for a squarefree monomial ideal $I$, the \textit{arithmetical degree}, denoted {\rm arith-deg}$I$, is equal to the number of associated prime ideals of $I$. 
\begin{prop}\label{}
Let $\Delta$ be a simplicial complex and   $P\in \Ass S/I_{\Delta}$.
Then  $$\mbox{\rm v}_{P}( I_{\Delta}) \le \mbox{\rm arith-deg}I_{\Delta}-1.$$
\end{prop}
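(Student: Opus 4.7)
The plan is to reduce the bound to a small combinatorial covering problem on the facets of $\Delta$. By Proposition~\ref{A-dualideal}, every $P\in\Ass S/I_\Delta$ has the form $P=P_F=(x_j\,:\,j\in V\setminus F)$ for some facet $F\in\mathcal{F}(\Delta)$, and $\mbox{\rm arith-deg}\,I_\Delta=|\mathcal{F}(\Delta)|$. So writing $s=|\mathcal{F}(\Delta)|$ and $\mathcal{F}(\Delta)=\{F=F_1,F_2,\dots,F_s\}$, it suffices to produce a monomial $f$ of degree at most $s-1$ with $(I_\Delta:f)=P_F$.

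The construction is direct. For each $i=2,\dots,s$ the facets $F$ and $F_i$ are distinct and maximal, so neither contains the other; in particular $F\setminus F_i\neq\emptyset$. Pick a vertex $v_i\in F\setminus F_i$, set $C=\{v_2,\dots,v_s\}\subset F$, and let $f=x^C=\prod_{v\in C}x_v$. Then $|C|\le s-1$, and I claim $(I_\Delta:f)=P_F$.

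To check this I use the usual translation $x_j x^C\in I_\Delta\iff C\cup\{j\}\notin\Delta$ (cf.\ the proof of Lemma~\ref{cha_lem}). Since $C\subset F\in\Delta$, we have $f\notin I_\Delta$. For $j\in F$, $C\cup\{j\}\subset F\in\Delta$, so $x_jf\notin I_\Delta$; hence $x_j\notin(I_\Delta:f)$ for every $j\in F$. For $j\in V\setminus F$ and any facet $F_i\neq F$, the chosen vertex $v_i$ satisfies $v_i\in C$ but $v_i\notin F_i$, so $C\not\subset F_i$ and in particular $C\cup\{j\}\not\subset F_i$; moreover $C\cup\{j\}\not\subset F$ because $j\notin F$. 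Thus $C\cup\{j\}$ lies in no facet of $\Delta$, giving $x_j f\in I_\Delta$ and so $x_j\in(I_\Delta:f)$. Combining these two points, $(I_\Delta:f)=P_F=P$, and therefore $\mbox{\rm v}_P(I_\Delta)\le\deg f=|C|\le s-1=\mbox{\rm arith-deg}\,I_\Delta-1$.

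There is essentially no obstacle here beyond recognizing the problem as a hitting-set problem for the sets $F\setminus F_i$, each of which is nonempty purely because facets are incomparable. The degenerate case $s=1$ needs a line of comment: then $C=\emptyset$, $f=1$, and $I_\Delta$ is a prime ideal equal to $P_F$, so the inequality $\mbox{\rm v}_P(I_\Delta)=0\le 0$ still holds. One could alternatively extract the same bound directly from Theorem~\ref{intersection} by exhibiting, for each $F_i\neq F$, a facet of $\Delta^*$ of the required type with intersection of size at least $n-h-(s-1)$, but the hitting-set construction above is the cleanest route.
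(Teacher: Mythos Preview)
Your proof is correct and is essentially the same argument as the paper's, just translated into the language of facets: under the bijection $F_i\leftrightarrow P_{F_i}$, your choice $v_i\in F\setminus F_i$ is exactly the paper's choice $x_{i_Q}\in Q\setminus P$ for $Q=P_{F_i}$, and your monomial $x^C$ coincides with the paper's $\prod_{Q\ne P}x_{i_Q}$. The paper simply asserts the colon identity $(I_\Delta:\prod x_{i_Q})=P$ in one line, whereas you spell out the verification via membership in facets; the underlying idea is identical.
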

\begin{proof}
Notice that $(I_{\Delta}):{\prod_{ Q\in \Ass S/I_{\Delta}\setminus \{P\}}x_{i_Q}}=P$, where we take $x_{i_Q}\in Q\setminus P$ for $Q\in  \Ass S/I_{\Delta}\setminus \{P\}$. Also, since $I_{\Delta}$ has no embedded prime ideals, we have $$\mbox{\rm v}_{P_F}( I_{\Delta}) \le |\{x_{i_Q} \,\,: Q\in \Ass S/I_{\Delta}\setminus \{P\}\}| \le \mbox{\rm arith-deg}I_{\Delta}-1,$$
which completes the proof. 
\end{proof}
Next, we consider the case that the ${\rm v}_{P}$-number is 1. To this end, we give the characterization of the ${\rm v}_{P}$-numbers in terms of simplicial complexes. 
\begin{prop}\label{v_P 1}
Let $\Delta$ be a simplicial complex on the vertex set $V$, $F\in \mathcal{F}(\Delta)$ and let $m$ be an integer with $2\le m \le |V|-\mbox{\rm bight} I_{\Delta}$. Then, $\mbox{\rm v}_{P_F}( I_{\Delta}) \le m $ if and only if  there exists $W \subset F$ with $|W|=m$ such that $|W\cap G|\le m-1$ for any $G \in \mathcal{F}(\Delta)\setminus\{F\}$.
\end{prop}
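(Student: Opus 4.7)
The plan is to translate the algebraic characterization of the $\mbox{\rm v}_{P_F}$-number given in the proof of Lemma \ref{cha_lem} into purely combinatorial data about faces of $\Delta$. Since $P_F=(x_j : x_j\in V\setminus F)$, and for a squarefree monomial $x_C$ one has $x_C\notin I_\Delta\iff C\in\Delta$, and $x_j x_C\in I_\Delta\iff C\cup\{x_j\}\notin\Delta$, the characterization becomes
\[
\mbox{\rm v}_{P_F}(I_\Delta)\;=\;\min\bigl\{|C|\,:\,C\in\Delta,\ C\cup\{x_j\}\notin\Delta\text{ for all }x_j\in V\setminus F\bigr\}.
\]
As a preliminary reduction I would show that any feasible $C$ must satisfy $C\subset F$: otherwise there would exist $x_0\in C\cap(V\setminus F)$, and then $C\cup\{x_0\}=C$ would still lie in $\Delta$, violating the defining condition. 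Thus the minimum runs over subsets $C\subset F$ that are contained in no facet of $\Delta$ other than $F$.

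For the forward direction, assume $\mbox{\rm v}_{P_F}(I_\Delta)\le m$ and pick a witnessing $C\subset F$ of size at most $m$ contained in no facet besides $F$. Because $|F|=|V|-{\rm ht}\,P_F\ge|V|-\mbox{\rm bight}\,I_\Delta\ge m$, one can enlarge $C$ to some $W\subset F$ with $|W|=m$. For any facet $G\neq F$, since $C\not\subset G$ and $C\subset W$, we have $W\not\subset G$, whence $|W\cap G|\le m-1$, as required.

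For the converse, given $W\subset F$ with $|W|=m$ and $|W\cap G|\le m-1$ for every facet $G\neq F$, I would take $C:=W$. Then $W\in\Delta$ (since $W\subset F$), and for any $x_j\in V\setminus F$, the existence of a facet $G$ containing $W\cup\{x_j\}$ would force $G\neq F$ (because $x_j\notin F$) and $|W\cap G|=|W|=m$, a contradiction. Hence $W\cup\{x_j\}\notin\Delta$ for every $x_j\in V\setminus F$, and therefore $\mbox{\rm v}_{P_F}(I_\Delta)\le|W|=m$. The only delicate point in the whole argument is the cardinality extension of $C$ to a subset $W\subset F$ of size exactly $m$; this is precisely what makes the upper bound $m\le|V|-\mbox{\rm bight}\,I_\Delta$ on the range of $m$ necessary, since it guarantees $|F|\ge m$.
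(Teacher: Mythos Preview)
Your proof is correct and follows essentially the same approach as the paper: both translate the condition $\mbox{\rm v}_{P_F}(I_\Delta)\le m$ via Lemma~\ref{cha_lem} into the existence of a subset $W\subset F$ of size $m$ meeting every $V\setminus G$ for $G\in\mathcal{F}(\Delta)\setminus\{F\}$, which is exactly $W\not\subset G$. Your version is in fact more explicit than the paper's, since you spell out the reduction $C\subset F$ and the padding step (enlarging a witness $C$ of size $\le m$ to one of size exactly $m$ inside $F$, using $|F|\ge |V|-\mbox{\rm bight}\,I_\Delta\ge m$), whereas the paper simply asserts the equivalence with a set of cardinality $m$.
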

\begin{proof}
${\rm v}_{P_F}(I_{\Delta})\le m$ if and only if  there exists $W= \{ x_{i_1},  x_{i_2}, \dots,  x_{i_m}\}\subset  V$ such that  $ x_{i_j} \not\in P_F$ for any  $j$ and there exists $ x_{i_{j_Q}}\in W$ such that $ x_{i_{j_Q}}\in Q$ for any $Q\in \Ass S/I_{\Delta}\setminus \{P_F\}$. Since the latter one is equivalent to the condition that there exists $W\subset  F$ with $|W|=m$ such that  $W\not\subset  G$ for any $ G \in \mathcal{F}(\Delta)\setminus\{F\}$, the assertion follows. 
\end{proof}

\begin{cor}\label{v_P 2}
Let $\Delta$ be a simplicial complex on the vertex set $V$, $F\in \mathcal{F}(\Delta)$ and let $m$ be an integer with $1\le m \le |V|-\mbox{\rm bight} I_{\Delta}$. Then $\mbox{\rm v}_{P_F}( I_{\Delta}) = m $ if and only if  there exists $W \subset F$ with $|W|=m$ such that $|W\cap G|\le m-1$ for any $G \in \mathcal{F}(\Delta)\setminus\{F\}$ and there exists $G_{W'}\in \mathcal{F}(\Delta)\setminus\{F\}$ such that $W'\subset G_{W'}$ for any $W' \subset F$ with $|W'|=m-1$.
\end{cor}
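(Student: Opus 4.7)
The plan is to deduce the equivalence directly from Proposition \ref{v_P 1} by splitting the equality $\mbox{\rm v}_{P_F}(I_{\Delta})=m$ into the two inequalities $\mbox{\rm v}_{P_F}(I_{\Delta})\le m$ and $\mbox{\rm v}_{P_F}(I_{\Delta})>m-1$, each of which will be controlled by one application of that proposition.

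For the first inequality, Proposition \ref{v_P 1} applied at $m$ gives exactly that $\mbox{\rm v}_{P_F}(I_{\Delta})\le m$ is equivalent to the existence of $W\subset F$ with $|W|=m$ such that $|W\cap G|\le m-1$ for every $G\in\mathcal{F}(\Delta)\setminus\{F\}$, which is the first clause of the claim. For the second inequality, I would apply Proposition \ref{v_P 1} at $m-1$ and take the contrapositive: $\mbox{\rm v}_{P_F}(I_{\Delta})>m-1$ is equivalent to saying that for every subset $W'\subset F$ with $|W'|=m-1$ there exists $G\in\mathcal{F}(\Delta)\setminus\{F\}$ with $|W'\cap G|\ge m-1$. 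The key elementary observation is that when $|W'|=m-1$, the inequality $|W'\cap G|\ge m-1$ forces $W'\cap G=W'$, i.e., $W'\subset G$; conversely, $W'\subset G$ trivially yields $|W'\cap G|=m-1$. Setting $G_{W'}:=G$ then recovers the second clause of the claim, and combining the two equivalences completes the proof.

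The only real obstacle is the boundary: Proposition \ref{v_P 1} was stated for $m\ge 2$, so the applications at $m=1$ and at $m-1\in\{0,1\}$ require separate bookkeeping. These small cases are unproblematic. Reading the proposition at $m=1$ recovers the fact that $\mbox{\rm v}_{P_F}(I_{\Delta})\le 1$ if and only if $F$ contains a free vertex, while at $m=0$ (with necessarily empty $W$) the statement reduces to $\mbox{\rm v}_{P_F}(I_{\Delta})\le 0$ being equivalent to $\mathcal{F}(\Delta)=\{F\}$, i.e., $I_{\Delta}=P_F$; both are immediate from the definition of $\mbox{\rm v}_{P_F}$ via Lemma \ref{cha_lem}. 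With these observations the argument runs uniformly over the admissible range $1\le m\le |V|-\mbox{\rm bight}\hspace{0.05cm}I_{\Delta}$, and I expect no further technicalities since the core step is formal.
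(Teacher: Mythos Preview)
Your proposal is correct and follows essentially the same approach as the paper: the paper's proof simply says ``By Proposition \ref{v_P 1}, it suffices to consider the case $m=1$; in this case, it is clear,'' which is exactly your strategy of reducing to Proposition \ref{v_P 1} and handling the boundary separately. If anything, you are more explicit than the paper about the $m-1\in\{0,1\}$ applications falling outside the stated range $m\ge 2$ of Proposition \ref{v_P 1}, a point the paper glosses over.
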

\begin{proof}
By Proposition \ref{v_P 1}, it suffices to consider the case $m=1$; in this case, it is clear. 
\end{proof}
\begin{cor}\label{}
Let $\Delta$ be a simplicial complex and let $F\in \mathcal{F}(\Delta)$. Then $\mbox{\rm v}_{P_F}( I_{\Delta})=1$ if and only if $F$ has a free vertex.
\end{cor}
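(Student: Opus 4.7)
The plan is to specialize Corollary \ref{v_P 2} to the case $m=1$ and translate its two conditions into the single combinatorial condition that $F$ possesses a free vertex. Since Corollary \ref{v_P 2} requires $1 \le m \le |V| - \mbox{\rm bight}\hspace{0.05cm} I_\Delta$, I first observe that for $m = 1$ this holds except in the degenerate situation $P_F = \mathfrak{m}$, which is easy to handle separately.

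Next, I would unwrap each of the two conditions. The first condition — ``there exists $W \subset F$ with $|W| = 1$ such that $|W \cap G| \le 0$ for every $G \in \mathcal{F}(\Delta) \setminus \{F\}$'' — is the statement that there is a vertex $v \in F$ lying outside every other facet of $\Delta$, which by definition is exactly the assertion that $v$ is a free vertex of $\Delta$ contained in $F$. The second condition — ``for every $W' \subset F$ with $|W'| = 0$ there exists $G_{W'} \in \mathcal{F}(\Delta) \setminus \{F\}$ with $W' \subset G_{W'}$'' — reduces to $\emptyset \subset G_\emptyset$ for some facet $G_\emptyset \neq F$, i.e., to the bare requirement that $\Delta$ has a facet distinct from $F$.

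The only edge case that needs attention is where $F$ is the unique facet of $\Delta$: then $I_\Delta = P_F$ is prime, so $\mbox{\rm v}_{P_F}(I_\Delta) = 0 \neq 1$, and one sees directly from Lemma \ref{cha_lem} (with $C = \emptyset$) that the left-hand side of the equivalence fails; on the right-hand side, the notion of a free vertex is vacuous in this situation, so neither side of the biconditional holds. With this case dispensed, the two conditions of Corollary \ref{v_P 2} collapse to the single statement ``$F$ has a free vertex,'' completing the proof.

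I do not anticipate any genuine obstacle: the argument is essentially a translation of the combinatorial characterization already established in Corollary \ref{v_P 2}. The only mildly delicate point is keeping the edge case (unique facet) straight so that the trivial instance of the second Corollary \ref{v_P 2} condition does not produce a spurious counterexample.
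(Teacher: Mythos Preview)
Your approach is exactly what the paper intends: the corollary is stated without proof, as an immediate consequence of Corollary~\ref{v_P 2} with $m=1$, and your translation of the two conditions into the free-vertex statement is correct.

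Two small remarks on the edge cases. First, the constraint $1 \le |V| - \mbox{\rm bight}\, I_\Delta$ is automatic: the paper's definition of a simplicial complex requires every singleton to be a face, so no facet is empty and hence $P_F \ne \mathfrak{m}$ for every $F\in\mathcal{F}(\Delta)$. Second, your analysis of the case ``$F$ is the unique facet'' is slightly off: in that situation necessarily $F = V$ and $I_\Delta = 0$, and every vertex of $F$ is then trivially free (it lies in no other facet, vacuously), so the right-hand side of the biconditional actually \emph{holds} while the left fails. Thus the corollary is literally false in this degenerate case; the paper tacitly assumes $I_\Delta \neq 0$ (equivalently, that $\Delta$ has at least two facets), under which the second condition of Corollary~\ref{v_P 2} is automatic and your argument goes through cleanly.
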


Now we consider the height 2 case. First, we provide an upper bound for ${\rm v}_{P}$-numbers of Stanley--Reisner ideals in terms of the depth of the second symbolic power of Stanley--Reisner ideals of its Alexander dual $S/I_{\Delta^*}^{(2)}$. 
\begin{thm}\label{v_P depth}
Let $\Delta$ be a pure simplicial complex such that ${\rm ht} I_{\Delta}=2$. Then  
$$\mbox{\rm v}_P( I_{\Delta})\le n- \mbox{\rm depth}\hspace{0.05cm}S/I_{\Delta^*}^{(2)}-1\mbox{ for any }P\in \Ass S/I_{\Delta}$$ 
\end{thm}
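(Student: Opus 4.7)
The plan is to show $\mathrm{depth}\, S/I_{\Delta^*}^{(2)} \le g+1$ where $g := n - \mathrm{v}_P(I_\Delta) - 2$, which rearranges to the claimed inequality. After relabeling, take $P = (x_1, x_2)$ and use Theorem \ref{intersection} to pick facets $F_1, F_2 \in \mathcal{F}(\Delta^*)$ with $x_2 \in F_1$ and $x_1 \in F_2$ achieving the maximum $g = |F_1 \cap F_2|$; set $G := F_1 \cap F_2$ and $L := \mathrm{link}_{\Delta^*} G$. I apply Takayama's formula (Theorem \ref{Takayama}) at the face $F = G \in \Delta^*$ with $\mathbf{a} \in \mathbb{N}^{V(L)}$ satisfying $x^{\mathbf{a}} = x_1 x_2$: it then suffices to produce a nonzero class in $\tilde{H}_0(\Delta_{\mathbf{a}}(I_L^{(2)}); k)$ in order to get $H^{g+1}_{\mathfrak{m}}(S/I_{\Delta^*}^{(2)}) \ne 0$.

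The first step is to unpack the condition $x_1 x_2 \in P_K^2 \cdot S_{F'}$ for each facet $K$ of $L$, carefully distinguishing the squarefree generators $x_i x_j$ (with $i \ne j$) from the squares $x_i^2$ of $P_K^2$. This routine computation should yield the identification
\[
\Delta_{\mathbf{a}}(I_L^{(2)}) \;=\; \mathrm{star}_L(x_1) \cup \mathrm{star}_L(x_2).
\]
The core step is then to prove $x_1$ and $x_2$ lie in different connected components of this union, which I would do by contradiction: assume a minimal-length star-path $x_1 = y_0, y_1, \ldots, y_m = x_2$ exists, meaning each edge $\{y_{i-1}, y_i\}$ sits inside a triangle $\{y_{i-1}, y_i, x_j\} \in L$ for some $j \in \{1, 2\}$. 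The case $m = 1$ is ruled out because $\{x_1, x_2\}$, being the complement of the facet $V \setminus \{x_1, x_2\}$ of $\Delta$, is a minimal non-face of $\Delta^*$ and so cannot lie in $L$. For $m \ge 3$, any interior triangle $\{y_{i-1}, y_i, x_1\} \in L$ produces the edge $\{x_1, y_i\} \in L$, giving the strictly shorter star-path $x_1, y_i, y_{i+1}, \ldots, y_m$; the case of $x_2$ is symmetric. Either way the minimality of $m$ is contradicted.

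The remaining case $m = 2$ is where the maximality of $g$ enters: a common neighbor $y_1$ of $x_1$ and $x_2$ in $L$ means $G \cup \{x_1, y_1\}$ and $G \cup \{x_2, y_1\}$ are faces of $\Delta^*$, and enlarging them to facets $F_2'$ and $F_1'$ of $\Delta^*$ yields an admissible pair for Theorem \ref{intersection} (with $x_2 \notin F_2'$ and $x_1 \notin F_1'$ automatic from the minimal non-face property of $\{x_1, x_2\}$) satisfying $|F_1' \cap F_2'| \ge |G \cup \{y_1\}| = g + 1$—contradicting the maximality of $g$. This shortest-path analysis of the star union is the main obstacle; once the disconnectedness is established, $\tilde{H}_0 \ne 0$ follows and Theorem \ref{Takayama} delivers $H^{g+1}_{\mathfrak{m}}(S/I_{\Delta^*}^{(2)}) \ne 0$, so $\mathrm{depth}\, S/I_{\Delta^*}^{(2)} \le g + 1$, as required.
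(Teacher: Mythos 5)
Your proof is correct, but it takes a genuinely different route from the paper's. The paper proves Theorem \ref{v_P depth} directly: setting $D=\operatorname{depth} S/I_{\Delta^*}^{(2)}$, it invokes \cite[Theorem 3.2]{RTY} iteratively (first for $\Delta^*$, then for the links $\link_{\Delta^*}\{x_{i_1},\dots,x_{i_t}\}$) to get diameter-two bounds on successive $1$-skeletons, builds one vertex at a time a face $\{x_{i_1},\dots,x_{i_{D-1}}\}\in\Delta^*$ whose unions with $x_i$ and with $x_j$ both lie in $\Delta^*$, and then concludes by Theorem \ref{intersection}. You instead argue contrapositively: you fix a maximizing pair $F_1,F_2$ from Theorem \ref{intersection}, set $G=F_1\cap F_2$ with $|G|=g$, and exhibit a nonzero graded piece $[H^{g+1}_{\frk m}(S/I_{\Delta^*}^{(2)})]_{\mathbf{b}}$ (with $\supp_-\mathbf{b}=G$ and positive part $x_1x_2$) via Theorem \ref{Takayama}, using the identification $\Delta_{\mathbf{a}}(I_L^{(2)})=\star_L(x_1)\cup\star_L(x_2)$ and a shortest-path contradiction to show $x_1$ and $x_2$ lie in different components; your computation of $\Delta_{\mathbf{a}}$ and your three cases ($m=1$ via the minimal non-face $\{x_1,x_2\}$, $m=2$ via maximality of $g$, $m\ge 3$ via shortcutting through the apex of the triangle) all check out, including the needed facts $x_1,x_2\in V(L)$ and $F_1'\neq F_2'$ automatically. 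In effect you inline a proof of the relevant direction of the RTY diameter criterion rather than citing it, and you replace the paper's iterated-link induction (which tacitly needs depth bounds to pass to links) by a single application of Takayama's formula at the face $G$. What each approach buys: the paper's is shorter given \cite{RTY} as a black box, while yours is self-contained relative to Sections 3--4 of this paper, and it is in fact the $h=2$ specialization of the paper's own strategy for the general-height Theorem \ref{S$_h$-depth} (there Corollary \ref{dual} plus Lemma \ref{nonvanish} play the role that the union-of-two-stars disconnectedness plays in your argument); for $m=2$ in Lemma \ref{nonvanish}, condition (3) amounts exactly to your description of $\Delta_{\mathbf{a}}(I^{(2)})$ as a union of stars.
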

\begin{proof}
We set $D=\mbox{\rm depth}\hspace{0.05cm}S/I_{\Delta^*}^{(2)} $ and take $P=(x_i,x_j)\in \Ass S/I_{\Delta}$.
If $D\ge 2 $, then 
the ${\rm diam}(\Delta^*) ^{(1)}\le2$ by \cite[Theorem 3.2]{RTY}.
Hence 
there exists   $ \{x_{i_1}\} \in V(\Delta^*)$ such that  $\{x_i, x_{i_1}\}, \{x_{i_1}, x_j\} \in \Delta^*$.
If $D\ge 3 $, then ${\rm diam}(\link_{ \Delta^*}\{x_{i_1}\}) ^{(1)}\le2$ by \cite[Theorem 3.2]{RTY}.
Hence there exists   $ \{x_{i_2}\} \in  V(\link_{ \Delta^*}\{x_{i_1}\}) ^{(1)})$ such that  $\{x_i, x_{i_1}\}, \{x_{i_1}, x_j\} \in \link_{ \Delta^*}\{x_{i_1}\}$.
We repeat this process and finally consider  the 1-skeleton $(\link_{ \Delta^*}\{x_{i_1}, x_{i_2}, \dots,  x_{i_{D-2}}\}) ^{(1)}=\link_{ \Delta^*}\{x_{i_1}, x_{i_2}, \dots,  x_{i_{D-2}}\}$.
Then there exists   $ \{x_{i_{D-1}}\} \in V(\link_{ \Delta^*}\{x_{i_1}, x_{i_2}, \dots,  x_{i_{D-2}}\})$ such that  $\{x_i, x_{i_{D-1}}\}, \{x_{i_{D-1}}, x_j\} \in \link_{ \Delta^*}\{x_{i_1}, x_{i_2}, \dots,  x_{i_{D-2}}\}$.
By considering $\{x_{i_1}, x_{i_2}, \dots,  x_{i_{D-2}}\} \in \Delta^*$, we get the inequality. 
\end{proof}

In particular, we provide a characterization of the ${\rm v}_{P}$-numbers when a simplicial complex satisfies a special condition. 
\begin{thm}\label{S2square}
Let $\Delta$ be a pure simplicial complex with ${\rm ht} I_{\Delta}=2$. If  $S/I_{\Delta^*}^{(2)}$ satisfies (S$_2$), then $$\mbox{\rm v}_P( I_{\Delta})= \indeg  I_{\Delta}-1\mbox{\quad for any }P\in \Ass S/I_{\Delta}$$
\end{thm}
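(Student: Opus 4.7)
The lower bound $\mbox{\rm v}_P(I_{\Delta}) \ge \indeg I_{\Delta} - 1$ holds in general by \cite[Lemma 3.17]{JV}, so writing $d=\indeg I_{\Delta}$ the theorem reduces to proving the reverse inequality $\mbox{\rm v}_P(I_{\Delta}) \le d-1$. Since $(S_2)$ implies unmixedness, $\Delta^{*}$ is pure, and Proposition \ref{A-dualideal} then forces every facet of $\Delta^{*}$ to have cardinality $n-d$. After relabeling write $P=(x_1,x_2)$. By Theorem \ref{intersection}, it is enough to produce a face $G \in \Delta^{*}$ of cardinality $n-d-1$ such that both $G \cup \{x_1\}$ and $G \cup \{x_2\}$ are facets of $\Delta^{*}$, for then $F_1 := G \cup \{x_2\}$ and $F_2 := G \cup \{x_1\}$ satisfy $|F_1 \cap F_2| = n-d-1$ and give $\mbox{\rm v}_P(I_{\Delta}) \le n-2-(n-d-1) = d-1$.

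The bridge from the Serre condition to this combinatorial target is the following uniform diameter statement: for every face $F \in \Delta^{*}$ with $|F| \le n-d-2$, the $1$-skeleton $(\link_{\Delta^{*}} F)^{(1)}$ has diameter at most $2$. I would derive this by transferring $(S_2)$ across the monomial localization of $S/I_{\Delta^{*}}^{(2)}$ at $P_{V \setminus F}$\,---\,compatible via Theorem \ref{Takayama} with the passage to $\link_{\Delta^{*}} F$\,---\,and then invoking \cite[Theorem 3.2]{RTY}, the depth-diameter equivalence that already underpins Theorem \ref{v_P depth}, at each such link.

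With this bound in hand, I construct $G$ by induction, setting $F_0 = \emptyset$ (legitimate since facets of $\Delta$ have size $n-2$, hence $\{x_1\}, \{x_2\} \in \Delta^{*}$). Given $F_k \in \Delta^{*}$ of size $k < n-d-1$ with $F_k \cup \{x_1\}, F_k \cup \{x_2\} \in \Delta^{*}$, apply the diameter bound to $\link_{\Delta^{*}} F_k$: since $P=(x_1,x_2)$ being an associated prime of $I_\Delta$ forces $\{x_1,x_2\}$ to be a non-face of $\Delta^{*}$, the vertices $x_1$ and $x_2$ are non-adjacent in this link, so the bound supplies a common neighbor $u \ne x_1,x_2$, and $F_{k+1} = F_k \cup \{u\}$ preserves the invariant. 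Taking $G := F_{n-d-1}$ completes the construction. The main obstacle I foresee is precisely the descent of $(S_2)$ through all relevant monomial localizations to yield the uniform diameter inequality; once that uniformity is in place, the inductive combinatorial construction is routine.
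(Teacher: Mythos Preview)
Your proposal is correct and follows essentially the same route as the paper: both reduce via Theorem~\ref{intersection} to inductively constructing a face $G\in\Delta^*$ of size $n-\indeg I_\Delta-1$ with $G\cup\{x_1\},\,G\cup\{x_2\}\in\Delta^*$, using at each step the diameter bound $\diam(\link_{\Delta^*}F)^{(1)}\le 2$ supplied by the $(S_2)$ hypothesis via \cite{RTY}. The paper invokes \cite[Corollary~3.3]{RTY} directly and simply asserts that $S/I_{\link_{\Delta^*}F}^{(2)}$ again satisfies $(S_2)$, so the descent you flag as the main obstacle is not an issue---it follows from standard localization.
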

\begin{proof}
Let $d^*=\dim S/I_{\Delta^*}$ and take $P=(x_i,x_j)\in \Ass S/I_{\Delta}$. Since $S/I_{\Delta^*}^{(2)}$ satisfies $(S_2)$, the 1-skeleton ${\rm diam}(\Delta^*) ^{(1)}\le2$ by \cite[Corollary 3.3]{RTY}. Hence 
there exists   $ \{x_{i_1}\} \in V(\Delta^*)$ such that  $\{x_i, x_{i_1}\}, \{x_{i_1}, x_j\} \in \Delta^*$.
Moreover, since $S/I_{\link_{ \Delta^*}\{x_{i_1}\}}^{(2)}$ also satisfies $(S_2)$, the 1-skeleton ${\rm diam}(\link_{ \Delta^*}\{x_{i_1}\}) ^{(1)}\le2$ by \cite[Corollary 3.3]{RTY}. Hence, there exists   $ \{x_{i_2}\} \in  V((\link_{ \Delta^*}\{x_{i_1}\}) ^{(1)})$ such that  $\{x_i, x_{i_1}\}, \{x_{i_1}, x_j\} \in \link_{ \Delta^*}\{x_{i_1}\}$. We repeat this process and finally consider $(\link_{ \Delta^*}\{x_{i_1}, x_{i_2}, \dots,  x_{i_{d^*-2}}\}) ^{(1)}$, that is, $\link_{ \Delta^*}\{x_{i_1}, x_{i_2}, \dots, x_{i_{d^*-2}}\}$. Then there is $ \{x_{i_{d^*-1}}\} \in V(\link_{ \Delta^*}\{x_{i_1}, x_{i_2}, \dots,  x_{i_{d^*-2}}\})$ such that $\{x_i, x_{i_{d^*-1}}\}, \{x_{i_{d^*-1}}, x_j\} \in \link_{ \Delta^*}\{x_{i_1}, x_{i_2}, \dots,  x_{i_{d^*-2}}\}$. By letting $F=\{x_{i_1}, x_{i_2}, \dots,  x_{i_{d^*-1}}\}$, we have  $|F|=n-\indeg  I_{\Delta}-1$ and $F\cup \{x_{i}\},  F\cup \{x_{j}\}\in   \Delta^*$, which leads to the conclusion by Theorem \ref{intersection}. 
\end{proof}
\begin{cor}\label{}
Let $\Delta$ be a pure simplicial complex with ${\rm ht} I_{\Delta}=2$. Then $\mbox{\rm v}_P( I_{\Delta})\le n- 3$ for any $P\in \Ass S/I_{\Delta}$ if and only if $\mbox{\rm depth} \hspace{0.05cm}S/I_{\Delta^*}^{(2)}\ge 2$.
\end{cor}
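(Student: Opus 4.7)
The plan is to handle the two implications separately. The forward direction (depth $\geq 2 \Rightarrow {\rm v}_P(I_\Delta) \le n-3$) is immediate from Theorem \ref{v_P depth}: specializing that bound with $D \ge 2$ yields ${\rm v}_P(I_\Delta) \le n - D - 1 \le n - 3$ for every $P \in \Ass S/I_\Delta$. So the substantive work is the converse.

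For the converse, I would reduce showing $\mbox{\rm depth}\hspace{0.05cm}S/I_{\Delta^*}^{(2)} \ge 2$ to verifying the diameter condition $\mbox{\rm diam}((\Delta^*)^{(1)}) \le 2$, via the characterization from \cite[Theorem 3.2]{RTY} that is already invoked in the proof of Theorem \ref{v_P depth}. The key structural observation is that since $\Delta$ is pure with ${\rm ht}\, I_\Delta = 2$, Proposition \ref{A-dualideal} gives $\indeg I_{\Delta^*} = 2$; also every singleton $\{x_i\}$ lies in $\Delta^*$ (otherwise $V\setminus\{x_i\}$ would be a face of $\Delta$ of size $n-1$, contradicting purity). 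Hence every non-edge $\{x_i,x_j\}$ of $\Delta^*$ is in fact a minimal non-face, so $P = (x_i,x_j)$ is an associated prime of $I_\Delta$.

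Now apply Theorem \ref{intersection} with $h=2$. The hypothesis ${\rm v}_P(I_\Delta) \le n-3$ forces $\max\{|F_1 \cap F_2|\} \ge 1$, producing facets $F_1, F_2 \in \mathcal{F}(\Delta^*)$ with $x_j \in F_1$, $x_i \in F_2$, and some $x_k \in F_1 \cap F_2$. The inequality $x_k \neq x_i, x_j$ is automatic: if $x_k = x_i$, then $\{x_i,x_j\} \subseteq F_1 \in \Delta^*$, contradicting that $\{x_i,x_j\}$ is a non-face, and similarly for $x_j$. Consequently $\{x_i,x_k\}, \{x_j,x_k\}$ are both edges of $\Delta^*$, so the distance from $x_i$ to $x_j$ in $(\Delta^*)^{(1)}$ is at most $2$. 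For pairs already forming an edge the distance is $1$, so we conclude $\mbox{\rm diam}((\Delta^*)^{(1)}) \le 2$ and thus $\mbox{\rm depth}\hspace{0.05cm}S/I_{\Delta^*}^{(2)} \ge 2$.

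The main potential obstacle is the reverse direction of the RTY characterization: the forward direction (depth $\ge 2 \Rightarrow$ diameter $\le 2$) is what was cited in Theorem \ref{v_P depth}, but here we need the converse implication as well. Provided that \cite[Theorem 3.2]{RTY} is in fact stated as an equivalence (which is the standard formulation for second symbolic powers), the argument closes cleanly; otherwise one would have to supply a direct proof that diameter $\le 2$ forces depth $\ge 2$, which is a standard application of Takayama's formula (Theorem \ref{Takayama}) to detect $H^1_{\mathfrak m}(S/I_{\Delta^*}^{(2)})$ combinatorially.
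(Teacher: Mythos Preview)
Your proposal is correct and follows essentially the same route as the paper: one implication from Theorem \ref{v_P depth}, and for the converse, reduce to $\diam((\Delta^*)^{(1)})\le 2$ via \cite[Theorem 3.2]{RTY}, then use Theorem \ref{intersection} to produce a common neighbour for any non-edge $\{x_i,x_j\}$ of $\Delta^*$. Your caveat about the RTY result is unnecessary: the paper invokes \cite[Theorem 3.2]{RTY} in both directions (here and in Theorem \ref{v_P depth}), and that result is indeed stated as an equivalence.
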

\begin{proof}
By Theorem \ref{v_P depth}, `` only if '' part is obvious. So, it suffices to show that ${\rm diam}(\Delta^*) ^{(1)}\le2$ by \cite[Theorem 3.2]{RTY}. We take $x,y\in V(\Delta^*)$ with $\{x, y\}\not\in \Delta^*$. Since $(x,y)\in  \Ass S/I_{\Delta}$ and $\mbox{\rm v}_{(x,y)}( I_{\Delta})\le n- 3$, by Theorem \ref{intersection}, we have $\max \{ |F_x\cap F_y | \,\,: F_x , F_y \in \Delta^{*}\mbox{ such that }  x\in F_x, y\in F_y \} \ge 1$, and hence there is $z\in V(\Delta^*)$ with $\{x, z\},   \{z, y\}\in \Delta^*$, that is, $\dist_{\Delta^*} (x,y) =2$.
\end{proof}
We need the following lemma to provide an upper bound for the ${\rm v}_{P}$-numbers in an arbitrary height. 
\begin{lemma}\label{nonvanish}
Let $\Delta$ be a  simplicial complex on the vertex set $[n]$.
If $ x_1, x_2,\dots, x_m \in [n]$ satisfy the following conditions;
\begin{enumerate}
\item $\partial (\{ x_1, x_2,\dots, x_m \}) \subset \Delta$, $\{ x_1, x_2,\dots, x_m \} \not\in \Delta$. 
\item There does not exist $z\in V(\Delta)$ such that $\{z, x_1,x_2,\dots, \hat{x_i} , \dots,  x_m\}\in \Delta$ for all $1\le i \le m$. 
\item For every $F\in \mathcal{F}(\Delta)$, there exists $i$ such that  $\{x_1,x_2,\dots, \hat{x_i} , \dots,  x_m\}\subset F$. 
\end{enumerate}
then $\widetilde{H}_{m-2}(\Delta;k)\ne 0$.
\end{lemma}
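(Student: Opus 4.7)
The plan is to construct a simplicial retraction $p \colon \Delta \to \partial\sigma$, where I abbreviate $\sigma = \{x_1, \ldots, x_m\}$ and write $\sigma_i = \{x_1, \ldots, \hat{x_i}, \ldots, x_m\}$; here $\partial\sigma$ denotes the subcomplex of all proper subsets of $\sigma$. By hypothesis (1), $\partial\sigma \subseteq \Delta$, and $\partial\sigma$ triangulates $S^{m-2}$, so $\widetilde{H}_{m-2}(\partial\sigma;k) \cong k$. Once the retraction is built, the inclusion $\partial\sigma \hookrightarrow \Delta$ splits on homology, yielding an injection $\widetilde{H}_{m-2}(\partial\sigma;k) \hookrightarrow \widetilde{H}_{m-2}(\Delta;k)$ and hence the desired nonvanishing.

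First, set $p(x_j) = x_j$ for $j \in [m]$, and for each $z \in V(\Delta) \setminus \sigma$ invoke hypothesis (2) to pick some $i(z) \in [m]$ with $\sigma_{i(z)} \cup \{z\} \notin \Delta$, then define $p(z) = x_{i(z)}$. Then $p$ restricts to the identity on $\partial\sigma$, and to finish it suffices (by subset closure in $\partial\sigma$) to check $p(F) \subsetneq \sigma$ for every facet $F \in \mathcal{F}(\Delta)$. Fix such an $F$: by (3), some $\sigma_i$ lies in $F$; if two distinct $\sigma_i, \sigma_j$ both did, then $\sigma = \sigma_i \cup \sigma_j \subseteq F$ would force $\sigma \in \Delta$, contradicting (1). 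Hence there is a unique $i(F) \in [m]$ with $F \cap \sigma = \sigma_{i(F)}$, and in particular $x_{i(F)} \notin F$. Then
$$p(F) = \sigma_{i(F)} \cup \{x_{i(z)} : z \in F \setminus \sigma\},$$
and I claim $x_{i(F)} \notin p(F)$: otherwise some $z \in F \setminus \sigma$ would satisfy $i(z) = i(F)$, making $\sigma_{i(z)} \cup \{z\} \subseteq F \in \Delta$, contrary to the choice of $i(z)$.

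The main step is the compatibility check on facets above, where the per-vertex choices $i(z)$ supplied by (2) must avoid the distinguished index $i(F)$ pinned down by (1) together with (3); this is the sole point where all three hypotheses interlock. Once this is established, the retraction argument is formal.
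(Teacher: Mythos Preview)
Your proof is correct and takes a genuinely different route from the paper's. The paper argues by induction on $n=|V(\Delta)|$: for any $z\in V(\Delta)\setminus\sigma$, condition~(2) guarantees that (after relabeling) $\{z\}\cup\sigma_i\notin\Delta$ for $p+1\le i\le m$ with $p\le m-1$; combined with~(3) this forces every facet through $z$ to contain $x_m$, so $\link_\Delta\{z\}$ is a cone with apex $x_m$. A Mayer--Vietoris argument for the covering $\Delta=\Delta_{V\setminus\{z\}}\cup\star_\Delta\{z\}$ then gives $\widetilde{H}_{m-2}(\Delta;k)\cong\widetilde{H}_{m-2}(\Delta_{V\setminus\{z\}};k)$, and one checks that $\Delta_{V\setminus\{z\}}$ again satisfies (1)--(3), reducing to the base case $\Delta=\partial\sigma$.

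Your retraction argument is more direct and more informative: it avoids both the induction and the Mayer--Vietoris machinery, and in fact shows the stronger statement that $\partial\sigma$ is a simplicial retract of $\Delta$, so $\widetilde{H}_*(\partial\sigma;k)$ embeds in $\widetilde{H}_*(\Delta;k)$ in every degree. The crucial observation---that for $z\in F\setminus\sigma$ one must have $i(z)\ne i(F)$, since otherwise $\sigma_{i(z)}\cup\{z\}\subset F\in\Delta$---is exactly the point where (1), (2), (3) mesh, and you have identified it cleanly. The paper's approach, by contrast, makes the role of each hypothesis visible step by step as vertices are removed, which can be pedagogically useful but is less efficient.
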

\begin{proof}
We fix an integer $m$ and we prove the lemma by  induction on the number of vertices (for $\ge m$). In the case that  $n=m$, there is nothing to prove. For the case that $n>m$, we take $z\in [n] \setminus \{ x_1, x_2,\dots, x_m \}$. By the conditions (2) and  (3), we may assume that $\{z, x_1,x_2,\dots, \hat{x_i} , \dots,  x_m\}\in \Delta$ for all $1\le i \le p$ and $\{z, x_1,x_2,\dots, \hat{x_i} , \dots,  x_m\} \not\in \Delta$ for all $p+1\le i \le m$ for some $p\le m-1$.By considering the Mayer--Vietoris sequence for $$0\rightarrow {\rm link}_{\Delta}\{z\}\rightarrow \Delta_{V(\Delta)\setminus\{z\}}\oplus{\rm star}_{\Delta}\{z\}\rightarrow\Delta\rightarrow 0,$$ we obtain that $\widetilde{H}_{m-2}(\Delta; k)\cong\widetilde{H}_{m-2}(\Delta_{V(\Delta)\setminus \{z\}}; k),$ where we notice that $\link_{\Delta}\{z\}$ is a cone with apex   $x_m$ and is contractible. Since $\Delta_{V(\Delta)\setminus \{z\}}$ also satisfies the conditions of the assumption, we have $\widetilde{H}_{m-2}(\Delta_{V(\Delta)\setminus \{z\}}; k) \ne 0$, by the induction hypothesis, the assertion follows. 
\end{proof}
Finally, we provide an upper bound for the ${\rm v}_{P}$-numbers in an arbitrary height in terms of $S_{{\rm ht}I_{\Delta}}$-{\rm depth} of $S/I_{\Delta^*}^{(2)}$. 
\begin{thm}\label{S$_h$-depth}
Let $\Delta$ be a pure simplicial complex such that ${\rm ht} I_{\Delta}=h\geq2$.
Then $$\mbox{\rm v}_P( I_{\Delta})\le n- \mbox{\rm S$_h$-depth}\hspace{0.05cm}S/I_{\Delta^*}^{(2)}-1
\mbox{ \quad for any }P\in \Ass S/I_{\Delta}.$$
\end{thm}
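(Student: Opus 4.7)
The plan is to generalize the proofs of Theorems~\ref{v_P depth} and~\ref{S2square} to arbitrary height $h$ by building a large common face inside the intersection of appropriate facets of $\Delta^*$, using Lemma~\ref{nonvanish} in place of the diameter estimates of \cite{RTY}. Set $D=\mbox{\rm S$_h$-depth}\,S/I_{\Delta^*}^{(2)}$ and, after relabelling, $P=(x_1,\ldots,x_h)$. By Theorem~\ref{intersection} it suffices to produce facets $F_1,\ldots,F_h\in\mathcal F(\Delta^*)$ with $V|_{[h]\setminus\{i\}}\subset F_i$ and $|\bigcap_{i=1}^h F_i|\ge D-h+1$.

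I would construct this intersection one vertex at a time. Inductively suppose we have distinct vertices $z_1,\ldots,z_{k-1}$ for which $\{z_1,\ldots,z_{k-1}\}\cup V|_{[h]\setminus\{i\}}\in\Delta^*$ for every $i\in[h]$, i.e.\ $\{x_1,\ldots,x_h\}$ remains a minimal non-face of $L_{k-1}:=\link_{\Delta^*}\{z_1,\ldots,z_{k-1}\}$ with all its proper subsets present. At step $k$, search $V(L_{k-1})$ for a vertex $z_k$ with $\{z_k\}\cup V|_{[h]\setminus\{i\}}\in L_{k-1}$ for every $i$. If such a $z_k$ exists, adjoin it and continue; we want to rule out failure before step $D-h+2$.

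The heart of the proof is this ruling out. If step $k$ fails, then the subcomplex of $L_{k-1}$ spanned by $\{x_1,\ldots,x_h\}$ together with the vertices meeting every $V|_{[h]\setminus\{i\}}$-star satisfies the three hypotheses of Lemma~\ref{nonvanish} with $m=h$, so its reduced homology in degree $h-2$ is non-zero. Feeding this into Takayama's formula (Theorem~\ref{Takayama}) and the Hilbert-series identity of Corollary~\ref{dual} applied to $I_{\Delta^*}^{(2)}$, the face $F=\{z_1,\ldots,z_{k-1}\}\in\Delta^*$ contributes non-trivially to $K^{h+k-2}_{S/I_{\Delta^*}^{(2)}}$ along the $(k-1)$-dimensional stratum of multidegrees supported on $F$. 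Consequently $\dim K^{h+k-2}_{S/I_{\Delta^*}^{(2)}}\ge k-1=(h+k-2)-h+1$, forcing $D\le h+k-2$ and contradicting the assumption $k\le D-h+1$. Once the construction yields $z_1,\ldots,z_{D-h+1}$, extending each $\{z_1,\ldots,z_{D-h+1}\}\cup V|_{[h]\setminus\{i\}}$ to a facet $F_i$ of $\Delta^*$ gives the required lower bound on $|\bigcap_{i=1}^h F_i|$, and the theorem follows from Theorem~\ref{intersection}. The main technical obstacle is the penultimate step: one has to match the subcomplex arising from Lemma~\ref{nonvanish} with the combinatorial data $\Delta_{\mathbf a}(I_{\link_{\Delta^*}F}^{(2)})$ entering Corollary~\ref{dual}, and verify that the resulting non-vanishing is not cancelled by contributions from other faces of $\Delta^*$ and that the associated graded piece really has Krull dimension at least $k-1$.
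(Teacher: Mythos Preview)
Your approach is the paper's own: iteratively construct $z_1,\ldots,z_{D-h+1}$ by applying the contrapositive of Lemma~\ref{nonvanish}, using Corollary~\ref{dual} and the S$_h$-depth bound to force the relevant $\widetilde H_{h-2}$ to vanish at each stage, and conclude via Theorem~\ref{intersection}. The one correction is that Lemma~\ref{nonvanish} should be applied directly to the Takayama complex $\Delta_{\mathbf a}\bigl(I_{\link_{\Delta^*}\{z_1,\ldots,z_{k-1}\}}^{(2)}\bigr)$ with $\mathbf a=(1^h,0^{n-h})$, not to a vertex-induced subcomplex of the link: its facets are exactly those facets of the link containing some $V|_{[h]\setminus\{i\}}$, so hypotheses (1) and (3) are automatic and the failure of step $k$ is precisely hypothesis (2). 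Your cancellation worry dissolves once you note that Theorem~\ref{Takayama} computes each $\mathbb Z^n$-graded piece of $H^{h+k-2}_{\mathfrak m}$ as a single reduced homology group, and since every summand in Corollary~\ref{dual} has non-negative coefficients, a nonzero contribution from $F=\{z_1,\ldots,z_{k-1}\}$ forces a pole of order at least $k-1$ at $t=1$ in the Hilbert series of $K^{h+k-2}$, whence $\dim K^{h+k-2}\ge k-1$.
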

\begin{proof}
Let $D=  \mbox{\rm S$_h$-depth}\hspace{0.05cm}S/I_{\Delta^*}^{(2)} $ and take $P\in \Ass S/I_{\Delta}$. Without loss of generality, we may assume that $P=(x_{1},   x_{2},  \dots, x_{h})$. First we suppose that $ D\ge h $ and take $\boldmath{a}=(1,1,\dots, 1, 0,\dots, 0)$, which means that the first $h$ coordinates are 1, the others are 0.
Then by Corollary \ref{dual}, we have $\widetilde{H}_{h-2}(\Delta_{\boldmath{a}}(I_{\Delta^*}^{(2)}); k)=0$ and hence, by Lemma \ref{nonvanish}, there exists $z_1\in V(\Delta_{\boldmath{a}}(I_{\Delta^*}^{(2)}))$ such that $\{z_1, x_1,x_2,\dots, \hat{x_i} , \dots,  x_h\}\in \Delta_{\boldmath{a}}(I_{\Delta^*}^{(2)})$ for all $1\le i \le h$. If $D\ge h+1$, then, by Corollary \ref{dual}, we have $\widetilde{H}_{h-2}(\Delta_{\boldmath{a}}(I^{(2)}_{\link_{\Delta^*}}\{z_1\}); k)=0$ and hence there exists $z_2\in V(\Delta_{\boldmath{a}}(I_{\Delta^*}^{(2)}))$ such that $\{z_1, z_2, x_1,x_2,\dots, \hat{x_i} , \dots,  x_h\}\in \Delta_{\boldmath{a}}(I_{\Delta^*}^{(2)})$ for all $1\le i \le h$ by Lemma \ref{nonvanish}. 
Also, if $D\ge h+2$, then again by Corollary \ref{dual}, we have $\widetilde{H}_{h-2}(\Delta_a(I^{(2)}_{\link_{\Delta^*}}\{z_1, z_2\}); k)=0$ and hence there exists $z_3\in V(\Delta_{\boldmath{a}}(I_{\Delta^*}^{(2)}))$ such that $\{z_1, z_2, , z_3, x_1,\dots, \hat{x_i} , \dots,  x_h\}\in \Delta_a(I_{\Delta^*}^{(2)})$ for all  $1\le i \le h$ by Lemma \ref{nonvanish}. We repeat this process and finally we consider $\link_{\Delta_{\boldmath{a}}(I^{(2)})}\{z_1, z_2, \dots, z_{D-h}\}$. Since $ K_{S/I_{\Delta^*}^{(2)}} ^{D}=0$, we have $\widetilde{H}_{h-2}(\Delta_{\boldmath{a}}(I^{(2)}_{\link_{\Delta^*}}\{z_1, z_2, \dots, z_{D-h}\}); k)=0$. By Lemma \ref{nonvanish}, there is $z_{D-h+1} \in V(\Delta_{\boldmath{a}}(I^{(2)}_{\link_{\Delta^*}}\{z_1, z_2,  \dots, z_{D-h}\})$.
Therefore we obtain that $\mbox{\rm v}_P( I_{\Delta})\le n-h-(D-h+1)= n-  \mbox{\rm S$_h$-depth}\hspace{0.05cm}S/I_{\Delta^*}^{(2)}-1$. 
\end{proof}

   
 \section{the {\rm v}-numbers of Stanley--Reisner ideals of 2-pure complexes} 
In this section, we give a necessary and sufficient condition for when the {\rm v}-number and the dimension are equal for Stanley--Reisner ideals. For the purpose, we prepare the next lemma about 2-pure simplicial complex.  
\begin{lemma}\label{no-boundary}
Let $\Delta$ be a $(d-1)$-dimensional pure simplicial complex on the vertex set $V$. Then  $\Delta$ is 2-pure  if and only if  $\tilde{H}_0(\link_{\Delta}(F); k) \ne 0$  for any $F\in \Delta$ with $\dim F=d-2$.
\end{lemma}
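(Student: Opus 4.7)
The plan is to translate the homological condition into a concrete combinatorial one and then prove each direction directly.

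First I would reinterpret $\tilde{H}_0(\link_\Delta(F); k) \neq 0$ in combinatorial terms. Since $\Delta$ is pure of dimension $d-1$ and $|F| = d-1$, any face $F' \in \link_\Delta(F)$ with $F' \neq \emptyset$ must satisfy $F \cup F' \in \Delta$ and $F \cap F' = \emptyset$, which forces $|F'| \leq 1$. Hence $\link_\Delta(F)$ is $0$-dimensional, consisting of $\emptyset$ together with the vertices $\{v\}$ for which $F \cup \{v\}$ is a facet of $\Delta$. For such a complex, $\tilde{H}_0 \neq 0$ is equivalent to the existence of at least two such vertices $v$. Call this the \emph{star-condition} at $F$: the face $F$ is contained in at least two facets of $\Delta$.

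For ($\Rightarrow$) assume $\Delta$ is 2-pure and fix $F$ with $\dim F = d-2$. Since $\Delta$ is pure, some facet $F \cup \{v\}$ exists. Now consider $\Delta_{V \setminus \{v\}}$, which is pure of dimension $d-1$ by hypothesis. The face $F$ lies in $\Delta_{V \setminus \{v\}}$ (as $v \notin F$) and has only dimension $d-2$, so it is properly contained in some facet of $\Delta_{V \setminus \{v\}}$ of the form $F \cup \{v'\}$ with $v' \neq v$. Thus $F$ is contained in two distinct facets of $\Delta$, giving the star-condition.

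For ($\Leftarrow$) assume the star-condition holds at every $(d-2)$-face. Fix $x \in V$; I must show every facet of $\Delta_{V \setminus \{x\}}$ has cardinality $d$. Take any face $F'' \in \Delta_{V \setminus \{x\}}$ with $|F''| < d$. By purity of $\Delta$, $F''$ sits inside some facet $F$ of $\Delta$ with $|F| = d$. If $x \notin F$ then $F \in \Delta_{V\setminus\{x\}}$ strictly enlarges $F''$ and we are done. Otherwise $x \in F$; set $G = F \setminus \{x\}$, so $\dim G = d-2$ and $F'' \subseteq G$. The star-condition provides a second facet $G \cup \{y\} \in \Delta$ with $y \neq x$, which lies in $\Delta_{V \setminus \{x\}}$ and strictly contains $F''$. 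In either case $F''$ fails to be a facet of $\Delta_{V \setminus \{x\}}$, proving that $\Delta_{V \setminus \{x\}}$ is pure of dimension $d-1$.

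The only potentially subtle step is the translation of the homological condition into the star-condition, where one has to be careful that $\emptyset \in \link_\Delta(F)$ contributes to $\tilde{H}_{-1}$ but not to $\tilde{H}_0$, and that $F$ itself cannot be a facet (so the link is nonempty as a $0$-dimensional complex and $\tilde{H}_0$ genuinely counts "number of vertices minus one"). Once this is in place, both implications are straightforward bookkeeping with facets containing a fixed codimension-one face.
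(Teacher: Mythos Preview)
Your proof is correct and follows essentially the same approach as the paper's own argument: both directions proceed by passing between a codimension-one face and the facets containing it, using purity of $\Delta$ and of $\Delta_{V\setminus\{x\}}$. Your explicit reformulation of $\tilde{H}_0(\link_\Delta(F);k)\ne 0$ as the ``star-condition'' (that $F$ lies in at least two facets) makes the argument slightly more transparent than the paper's version, which uses this fact implicitly.
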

\begin{proof} 
We take $G\in \Delta$ with $\dim G=d-2$. Then there exists a facet,  say $F$, of  $\Delta$ with  $G\subset F$. By setting $F=G\cup \{x\}$, there exists a facet,  say $F'$, of  $\Delta _{V\setminus \{x\}}$ with  $G\subset F'$. Since $F'$ is also a facet of  $\Delta$, we have  $\tilde{H}_0(\link_{\Delta}(G); k) \ne 0$. To prove the converse, it is enough to show that for any  $x\in V$ and face $H \in \Delta _{V\setminus \{x\}}$,there exists $F \in\mathcal{F}(\Delta _{V\setminus \{x\}})$ such that $H\subset F$ and $\dim F=d-1$. Now, since $\Delta$ is pure, there exists $F' \in\mathcal{F}(\Delta)$ with  $H\subset F'$. We may assume that $x \in F^{\prime}$, otherwise the proof is done. By the assumption $\widetilde{H}_{0}({\rm link}_{\Delta}(F^{\prime}\setminus\{ x\}))\neq 0$, there exists a facet, say $F$, of $\Delta$ with $F \ne F'$ such that $x\notin F$. Then $F\in\mathcal{F}(\Delta _{V\setminus \{x\}})$ with  $H\subset F$, the assertion follows. 
\end{proof}
We show  the main theorem in this section, which is a generalization of  \cite[Theorem 4.5]{JV} on the edge ideal case.
\begin{thm}\label{w2}
Let $\Delta$ be a $(d-1)$-dimensional  simplicial complex on the vertex set $V=\{x_1, \dots, x_n\}$. Then   $\mbox{\rm v}( I_{\Delta})= d$ if and only if  $\Delta$ is 2-pure.
\end{thm}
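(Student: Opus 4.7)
The plan is to use Theorem \ref{intersection} to reformulate the v-number as a minimization over faces of $\Delta$ itself (rather than facets of $\Delta^{*}$), and then to match the resulting condition with the characterization of 2-purity provided by Lemma \ref{no-boundary}.

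\textbf{Step 1 (Reformulation).} For an associated prime $P=(x_{1},\dots,x_{h})$, let $F_{0}=V\setminus\{x_{1},\dots,x_{h}\}$, which is a facet of $\Delta$. Facets of $\Delta^{*}$ satisfying the hypothesis of Theorem \ref{intersection} correspond, under Alexander duality, to minimal non-faces of $\Delta$ of the form $\{x_{j}\}\cup G_{j}$ with $G_{j}\subseteq F_{0}$. Setting $G^{*}=\bigcup_{j=1}^{h}G_{j}$ and observing that $\{x_{j}\}\cup G^{*}\notin\Delta$ for each $j$, I would rewrite the formula of Theorem \ref{intersection} as
$$\mathrm{v}_{P}(I_{\Delta})=\min\bigl\{|G^{*}|\,:\,G^{*}\subseteq F_{0},\ G^{*}\cup\{x_{j}\}\notin\Delta\ \text{for all }j\in[h]\bigr\}.$$
Taking $G^{*}=F_{0}$ is always admissible, so $\mathrm{v}_{P}(I_{\Delta})\le|F_{0}|$.

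\textbf{Step 2 (Forward direction).} Assume $\mathrm{v}(I_{\Delta})=d$. If $\Delta$ were not pure, some facet $F'$ would satisfy $|F'|<d$; plugging $G^{*}=F'$ into the formula above for $P_{F'}$ yields $\mathrm{v}_{P_{F'}}(I_{\Delta})\le|F'|<d$, a contradiction. Hence $\Delta$ is pure of dimension $d-1$ and every facet has size $d$. For a pure complex, the condition ``$G^{*}\cup\{x_{j}\}\notin\Delta$ for every $x_{j}\notin F_{0}$'' is equivalent to saying that $F_{0}$ is the only facet of $\Delta$ containing $G^{*}$. The hypothesis $\mathrm{v}(I_{\Delta})=d$ therefore forces, for every facet $F_{0}$ and every $x\in F_{0}$, the existence of another facet $F_{1}\neq F_{0}$ with $F_{0}\setminus\{x\}\subseteq F_{1}$; equivalently, every $(d-2)$-face lies in at least two facets, so $\widetilde{H}_{0}(\link_{\Delta}F;k)\neq 0$ for every $F$ of dimension $d-2$. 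By Lemma \ref{no-boundary}, $\Delta$ is 2-pure.

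\textbf{Step 3 (Reverse direction).} Suppose $\Delta$ is 2-pure. Then $\Delta$ is pure of dimension $d-1$, so the reformulated formula of Step 1 applies. Let $P=(x_{1},\dots,x_{h})$ be an associated prime with facet $F_{0}$, and consider any proper subset $G^{*}\subsetneq F_{0}$. Pick $x\in F_{0}\setminus G^{*}$. By Lemma \ref{no-boundary}, $\widetilde{H}_{0}(\link_{\Delta}(F_{0}\setminus\{x\});k)\neq 0$, so there exists a vertex $y\neq x$ with $(F_{0}\setminus\{x\})\cup\{y\}\in\Delta$; since $\Delta$ is pure, $F_{1}:=(F_{0}\setminus\{x\})\cup\{y\}$ is a facet distinct from $F_{0}$, hence $y\notin F_{0}$. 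Then $G^{*}\cup\{y\}\subseteq F_{1}\in\Delta$, and $y=x_{j}$ for some $j\in[h]$; so $G^{*}$ violates the defining condition in the formula. Therefore the minimum is attained only at $G^{*}=F_{0}$, giving $\mathrm{v}_{P}(I_{\Delta})=d$ for every $P$, and consequently $\mathrm{v}(I_{\Delta})=d$.

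\textbf{Main obstacle.} The principal technical point is the reformulation in Step 1: translating the $\Delta^{*}$-side formula of Theorem \ref{intersection} into an intrinsic statement about facets and non-faces of $\Delta$, and verifying that the minimization over tuples $(F_{1},\dots,F_{h})$ genuinely collapses to a minimization over a single set $G^{*}\subseteq F_{0}$. Once this dictionary is in place, the equivalence with 2-purity is an almost immediate consequence of Lemma \ref{no-boundary}.
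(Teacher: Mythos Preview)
Your proof is correct and takes a genuinely different route from the paper's. The paper invokes an external characterization from \cite[Corollary 3.7]{JV}, namely that $\mathrm{v}(I_\Delta)=d$ if and only if $\Delta$ is pure and $\mathcal{A}_{\mathcal{C}}=\mathcal{F}(\Delta)$, and then argues via neighbor sets of the associated clutter that $\mathcal{A}_{\mathcal{C}}=\mathcal{F}(\Delta)$ is equivalent to the link condition of Lemma \ref{no-boundary}. You instead stay internal to the paper: you pull Theorem \ref{intersection} back through Alexander duality to obtain the formula $\mathrm{v}_{P_{F_0}}(I_\Delta)=\min\{|G^*|: G^*\subseteq F_0,\ G^*\cup\{x_j\}\notin\Delta \text{ for all }x_j\notin F_0\}$, recognize that for a pure complex this minimum equals $d$ precisely when no codimension-one face of $F_0$ is contained in $F_0$ alone, and then close with Lemma \ref{no-boundary}. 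Your approach has the advantage of being self-contained (no appeal to the $\mathcal{A}_{\mathcal{C}}$ machinery of \cite{JV}) and of showcasing Theorem \ref{intersection} as a working tool; the paper's approach is shorter once one accepts the cited result. The one place you should tighten is Step 1: the passage from ``minimize $|\bigcup_j G_j|$ over tuples of minimal non-faces $\{x_j\}\cup G_j$'' to ``minimize $|G^*|$ over $G^*\subseteq F_0$ with $G^*\cup\{x_j\}\notin\Delta$'' deserves the one-line justification that any $G^*$ in the second set contains some $\bigcup_j G_j$ from the first, and conversely every $\bigcup_j G_j$ lies in the second set; you flag this as the ``main obstacle'' but do not actually write it out.
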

\begin{proof} 
Take the clutter $ \mathcal{C}$ such that $I( \mathcal{C})=I_{\Delta}$. By the above lemma, it is enough to show that $\mbox{\rm v}( I_{\Delta})= d$ if and only if $\tilde{H}_0(\link_{\Delta}(F); k) \ne 0$ for any $F\in \Delta$ with $\dim F=d-2$. First, we assume that the condition that $\tilde{H}_0(\link_{\Delta}(F); k) \ne 0$  for any $F\in \Delta$ with $\dim F=d-2$. From the fact that $\mbox{\rm v}( I_{\Delta})= d$ if and only if $\Delta$ is pure and $\mathcal{A}_{ \mathcal{C}}=\mathcal{F}(\Delta)$, by \cite[Corollary 3.7]{JV}, it suffices to show that $\mathcal{A}_{\mathcal{C}}=\mathcal{F}(\Delta)$. Let $F \in \Delta\setminus \mathcal{F}(\Delta)$, then there exists a face $G$ of $\Delta$ with $\dim G=d-2$ such that $F \subset G$. Also, by the assumption, there exist two facets $F_1, F_2 \in \mathcal{F}(\Delta)$ such that $G = F_1 \cap F_2$. Then we can write $F_1=G  \cup \{x_i\}$ and  $F_2=G  \cup \{x_j\}$. Now, since $G  \cup \{x_i, x_j\}\not\in \Delta$, there exists a minimal non-face $H$ such that $\{x_i, x_j\}\subset H \subset G  \cup \{x_i, x_j\}$. Since $H \cap N_{\mathcal{C}}(G)=\emptyset$, $N_{\mathcal{C}}(G)$ is not a vertex cover of $\mathcal{C}$ and hence $G\not\in \mathcal{A}_{\mathcal{C}}$, which leads to $\mathcal{A}_{\mathcal{C}}=\mathcal{F}(\Delta)$. Next we show the other implication by contradiction. Suppose that there exists a face $G$ of $\Delta$ with ${\rm dim}G=d-2$ such that $G$ is contained in a unique facet, say $F$. Now, it is easy to see that $N_{\mathcal{C}}(G)=V\setminus F$ is a minimal vertex cover of $\mathcal{C}$, that is, $G \in  \mathcal{A}_{\mathcal{C}}$. This means that $ \mathcal{A}_{\mathcal{C}} \ne \mathcal{F}(\Delta)$. then, by \cite[Corollary 3.7]{JV}, we have $\mbox{\rm v}( I_{\Delta})< d$.
\end{proof}
By the definitions of 2-pure and 2-Cohen--Macaulay and the fact that both Gorenstein* and matroid* complexes are 2-Cohen--Macaulay, the following corollary easily holds. 
\begin{cor}\label{2-CM}
If $\Delta$ is 2-Cohen-Macaulay, Gorenstein or a matroid, then $$\mbox{\rm v}( I_{\Delta})=\reg k[\Delta].$$
\end{cor}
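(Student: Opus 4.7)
The plan is to combine Theorem \ref{w2} with a standard regularity identity. I first reduce each of the three hypotheses to the $2$-Cohen--Macaulay case, and then treat that case uniformly.

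In the first step, I observe that Gorenstein* and matroid* complexes are already $2$-Cohen--Macaulay; this is the fact quoted in the authors' remark preceding the corollary. For a general Gorenstein or matroid complex $\Delta$ that is not starred, $\Delta$ is an iterated cone over a Gorenstein* or matroid* complex $\Delta'$: each free vertex $v$ yields a decomposition $\Delta = \{v\} \ast (\Delta \setminus \{v\})$, so $k[\Delta] \cong k[\Delta \setminus \{v\}][v]$. Under this polynomial extension both $\reg k[\Delta]$ and $\mbox{\rm v}(I_{\Delta})$ are preserved (for the v-number this is because the associated primes of $I_{\Delta}$ correspond bijectively to those of $I_{\Delta \setminus \{v\}}$ and $v$ is a non-zero-divisor modulo $I_{\Delta}$). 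Thus I may assume $\Delta$ is $2$-Cohen--Macaulay.

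Since $2$-CM implies $2$-pure directly from the definition, Theorem \ref{w2} gives $\mbox{\rm v}(I_{\Delta}) = d$, where $d = \dim k[\Delta]$. It remains to prove $\reg k[\Delta] = d$. Because $k[\Delta]$ is Cohen--Macaulay, $\reg k[\Delta]$ equals the top degree of the $h$-vector, i.e.\ $\max\{i : h_i \neq 0\}$. A standard computation gives $h_d = \dim_k \widetilde{H}_{d-1}(\Delta; k)$. For Gorenstein* we have $h_d = h_0 = 1$ by the symmetry $h_i = h_{d-i}$; for matroid* the nonvanishing $\widetilde{H}_{d-1}(\Delta;k) \neq 0$ is the defining property; and more generally $2$-CM implies $\widetilde{H}_{d-1}(\Delta; k) \neq 0$ by a classical result. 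Hence $h_d \neq 0$ and $\reg k[\Delta] = d$.

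Combining the two halves yields $\mbox{\rm v}(I_{\Delta}) = d = \reg k[\Delta]$. The main (and only nontrivial) input beyond Theorem \ref{w2} is the identification of the top $h$-entry with $\dim_k \widetilde{H}_{d-1}(\Delta; k)$ together with the nonvanishing of this homology in the $2$-CM setting; this is where the Cohen--Macaulay hypothesis is genuinely used, and consequently where one must take care to distinguish the starred versions (or handle the cone case separately) before invoking the regularity identity.
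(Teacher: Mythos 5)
Your proof is correct and follows essentially the same route as the paper's: settle the $2$-Cohen--Macaulay case via Theorem \ref{w2}, then reduce the Gorenstein and matroid cases by iterated deconing, using that both $\mbox{\rm v}(I_{\Delta})$ and $\reg k[\Delta]$ are unchanged under coning; you additionally make explicit the step $\reg k[\Delta]=\dim k[\Delta]$ for $2$-CM complexes (via $h_d=\dim_k\widetilde{H}_{d-1}(\Delta;k)\neq 0$), which the paper's proof leaves implicit. One terminological slip worth fixing: the vertex $v$ giving $\Delta=\{v\}\ast\Delta_{V\setminus\{v\}}$ is a cone point (a vertex lying in \emph{every} facet, equivalently one whose variable is a nonzerodivisor on $k[\Delta]$), not a \emph{free vertex} in the paper's sense (a vertex lying in exactly one facet); your argument plainly intends the former, so correctness is unaffected.
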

\begin{proof} 
If $\Delta$ is 2-Cohen--Macaulay, then, by Theorem \ref{w2}, the equality holds. Hence, the equality also holds for a Gorenstein*  complex and a matroid* complex. Since taking a cone does not change  the values of  the v-number  and the  regularity, and also a Gorenstein complex (resp., a matroid complex) is an iterated cone of a  Gorenstein* complex (resp., a matroid complex), the assertion follows.
\end{proof}


\section{Range of v-numbers of  monomial ideals of pure height two } 
In this section, we construct examples of a pure height two simplicial complex $\Delta$ such that $({\rm reg}I_{\Delta}, {\rm v}(I_{\Delta}), {\rm indeg}\hspace{0.05cm}I_{\Delta})$ holds for given integers,
which gives a refined  version of \cite[Theorem  3.10]{S}.  Also, we give counter-examples for several questions. We show the main theorem in this section. 
\begin{thm}\label{range}
For $p\ge q \ge r \ge 1$, there exists  a pure simplicial complex $\Delta$ with ${\rm ht}I_{\Delta}=2$ such that $\reg  I_{\Delta}=p+1$,  $\mbox{\rm v}( I_{\Delta})=q$, $\indeg I_{\Delta}=r+1$.
\end{thm}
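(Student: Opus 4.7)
\medskip

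\noindent\textbf{Proof plan.}
The plan is to realize every admissible triple $(p,q,r)$ by a single uniform construction: a multi-whisker graph over a complete base, whose Alexander-dual independence complex supplies the desired $\Delta$. Thanks to the formulas already established in the paper, the verification reduces to enumerating the minimal vertex covers of that graph.

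Given $p\ge q\ge r\ge 1$, let $G=K_{r+1}[n_1,\ldots,n_{r+1}]$ be the multi-whisker graph of the complete graph $K_{r+1}$ on $X=\{x_1,\ldots,x_{r+1}\}$, where the $n_i\ge 1$ are chosen so that $\min_i n_i=q-r+1$ and $\max_i n_i=p-r+1$; such $n_i$ exist because $p\ge q\ge r\ge 1$, and one may simply take $n_1=q-r+1$, $n_{r+1}=p-r+1$ and any values in $[q-r+1,p-r+1]$ for the rest. Set $\Delta:=\Delta(G)^*$, so that $I_\Delta=J(G)$ is the cover ideal of $G$. Because $G$ is a (simple) graph, Proposition \ref{A-dualideal} guarantees that $\Delta$ is pure with $\mathrm{ht}\,I_\Delta=2$.

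The three invariants will then be checked as follows. First, I enumerate the minimal vertex covers of $G$: the set $X$ itself is minimal of size $r+1$, and for each $i$ the set $X\setminus\{x_i\}\cup\{y_{i,1},\ldots,y_{i,n_i}\}$ is minimal of size $r+n_i$; a short case check using that $K_{r+1}$ is complete shows these are all the minimal covers. Consequently $\indeg\,I_\Delta=\tau(G)=r+1$. For the v-number, I apply Corollary \ref{multi-whisker} directly, which gives
\[
\mathrm{v}(J(G))=(r+1)+\min_i n_i-2=r-1+(q-r+1)=q.
\]
For the regularity, I invoke Terai's formula $\reg J(G)=\pd_S(S/I(G))$, and use that a multi-whisker graph is sequentially Cohen--Macaulay by \cite[Theorem 6.1]{MPT}. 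For sequentially Cohen--Macaulay squarefree monomial ideals, the projective dimension equals the big height (this is a standard consequence of the sequentially Cohen--Macaulay filtration, whose layers are pure skeleta, each Cohen--Macaulay of codimension equal to the corresponding height). Applying this,
\[
\reg I_\Delta=\pd_S(S/I(G))=\mathrm{bight}\,I(G)=\max(r+1,\,r+\max_i n_i)=r+(p-r+1)=p+1.
\]

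The step I expect to require the most care is the minimal-vertex-cover enumeration, since any candidate cover must simultaneously cover all $\binom{r+1}{2}$ edges of $K_{r+1}$ and all pendant edges; this is what forces the two families of minimal covers above and, in turn, produces the clean formula $\mathrm{bight}\,I(G)=r+\max_i n_i$. Once that is in hand, the three invariant calculations are immediate from the theorems already proved in Sections 4--5.
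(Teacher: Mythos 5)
Your proof is correct, but it takes a genuinely different route from the paper. The paper proceeds by a bare-hands construction: it defines a small complex $\Gamma=\langle F_1,F_2,\{x_{p-r+3}\},\dots,\{x_{p+2}\}\rangle$ on $p+2$ vertices with $|F_1\cap F_2|=p-q$, sets $\Delta=\Gamma^*$, reads off $\mbox{\rm v}(I_\Delta)=q$ directly from Theorem \ref{intersection}, and gets $\reg I_\Delta=\pd k[\Gamma]=p+1$ from Terai's formula, with all invariants visible by inspection. You instead realize $\Delta$ as the dual of the independence complex of the multi-whisker graph $K_{r+1}[n_1,\dots,n_{r+1}]$ with $\min_i n_i=q-r+1$ and $\max_i n_i=p-r+1$; your enumeration of the minimal vertex covers (the base $X$, and $X\setminus\{x_i\}$ together with all whiskers at $x_i$) is correct and immediately yields purity, $\mathrm{ht}\,I_\Delta=2$, and $\indeg I_\Delta=r+1$, while Corollary \ref{multi-whisker} gives $\mbox{\rm v}(I_\Delta)=q$ exactly as you compute (one could equally use Corollary \ref{cover ideals} directly, since the maximal pairwise facet intersection is $\sum_j n_j-\min_i n_i$). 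What your approach buys is uniformity and reuse of the paper's Section 5 machinery; what it costs is one imported fact not established or cited in the paper, namely that sequentially Cohen--Macaulay squarefree quotients satisfy $\pd S/I=\mbox{\rm bight}\,I$. That fact is true and standard, but your parenthetical justification via ``pure skeleta'' is only a sketch, and as written this is the one soft spot. It can be repaired entirely within the paper's own references: \cite[Corollary 4.3]{MPT}, already quoted in Section 9, gives $\depth S/I(G)=i(G)$ for multi-whisker graphs, and here $i(G)=1+\sum_j n_j-\max_i n_i$, so Auslander--Buchsbaum yields
$$\pd S/I(G)=\bigl((r+1)+\textstyle\sum_j n_j\bigr)-\bigl(1+\textstyle\sum_j n_j-\max_i n_i\bigr)=r+\max_i n_i=p+1,$$
after which Terai's formula gives $\reg I_\Delta=p+1$ as you claim. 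With that substitution (or an explicit citation for $\pd=\mbox{\rm bight}$ in the sequentially Cohen--Macaulay case), your argument is complete.
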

\begin{proof} 
We set $F_1=\{ x_1, x_2, \dots , x_{p-r+1}\}$, $F_2=\{x_1, x_2, \dots , x_{p-q},  x_{p-r+2}\}$ and
$$\Gamma=\langle F_1, F_2, \{x_{p-r+3}\}, \{x_{p-r+4}\}, \dots , \{x_{p+2}\}\rangle.$$
Notice that $$\mathcal{G}(I_{\Gamma})=\left(\bigcup_{i=r}^{q}\{x_{p-i+1}x_{p-r+2}\}\right)\cup\left(\bigcup_{k=3}^{r+2}\left(\bigcup_{j\neq p-r+k}\{x_{j}x_{p-r+k}\}\right)\right),$$ which implies $\Gamma^{*}$ is pure with ${\rm ht}I_{\Gamma^{*}}=2$. Now, we have $\pd k[\Gamma]=p+1$ and  ${\rm ht} I_{\Gamma}=(p+2)-(p-r+1)=r+1$. If $p=q$, then each intersection of two facets is empty, and hence, by Theorem \ref{intersection}, ${\rm v}(I_{\Gamma^{*}})=p=q$. So, we may assume that $p\neq q$. Now, since $x_{p-r+1}\in F_{1}$, $x_{p-r+2}\in F_{2}$ and $|F_{1}\cap F_{2}|=p-q\geq 1$, again by Theorem \ref{intersection}, $\mbox{\rm v}_{(x_{p-r+1},x_{p-r+2})}( I_{\Gamma^{*}})=(p+2)-(p-q)-2= q.$ Hence  $\reg  I_{\Gamma^*}=\pd k[\Gamma]=p+1$ (\cite[Corollary 0.3]{T99}),  $\mbox{\rm v}( I_{\Gamma^*})=q$, $\indeg I_{\Gamma^*}=r+1$. By taking $\Delta=\Gamma^{*}$, the assertion follows. 
\end{proof}
In the rest of this section, we give answers for several questions. 
\begin{quest}\cite[Question 3.12]{S}
Does there exist a graph $G$ such that $G$ is not a complete multipartite graph, but $\mbox{\rm v}( J(G))> \mbox{\rm bight}I(G)-1?$
\end{quest}
We prove that this question is true. 
\begin{exam} For $m>>\ell$, we set 
\begin{enumerate}[ ]
\item $F_1=\{ x_1, x_2, \dots , x_{m}\}$, 
\item $F_2=\{x_1, x_2, \dots , x_{\ell},  x_{m+1},  \dots, x_{2m-\ell}\}$, 
\item  $F_3=\{ x_{2m-\ell+1},  x_{2m-\ell+2}, \dots ,  x_{3m-\ell} \}.$ 
\end{enumerate}
Let $\Delta=\langle F_1, F_2, F_3 \rangle$, which is a pure flag complex. Then, 
$\mbox{\rm v}( I_{\Delta^*})=(3m-\ell)-\ell-2=3m-2\ell-2$, and  ${\rm bight}I_{\Delta}={\rm ht} I_{\Delta}=(3m-\ell)-m=2m-\ell$, and hence 
$${\rm v}(I_{\Delta^{*}})=3m-2\ell-2> 2m-\ell-1={\rm bight}I_{\Delta}-1,$$
if $m>\ell+1$. 
\end{exam}

\begin{quest}\cite[Question 5.5]{SS}
For a squarefree monomial ideal $I$, does $\mbox{\rm v}(I)\le \depth S/I $ hold?  
Also can we say that   $\mbox{\rm v}(I)\ge \dim S/I-\depth S/I $?
\end{quest}
We give negative answers for these questions. 
\begin{exam} 
Let $\Delta$ be the 6-vertex triangulation of real projective plane (See Example \ref{RealP}).
Now we consider its Stanley--Reisner ring $k[\Delta]$ such that the base filed $k$ has 
$\chara k=2$.
Then $\mbox{\rm v}( I_{\Delta})=3$ and    $\depth k[\Delta]=2$.                        
\end{exam}
\begin{exam} 
Set $F_1=\{ x_1, x_2,  x_{3}\}$, $F_2=\{x_4\}$ and  $\Delta=\langle F_1, F_2 \rangle$.
Then  $\mbox{\rm v}( I_{\Delta})=1$ (for the proof see  \cite[Proposition 3.19]{JV}) and $\dim  k[\Delta]-\depth  k[\Delta] =3-1=2$.
\end{exam}


\section{the v-numbers of  edge ideals of \\ very well-covered graphs and multi-whisker graphs} 
In the last section of this paper, we give the {\rm v}-number of the edge ideal of very well-covered graphs and multi-whisker graphs. First, we reprove the {\rm v}-number of the edge ideal of very well-covered graphs by using Theorem \ref{structure}. 
\begin{thm}\cite[Theorem 4.3]{GRV}\label{very well-covered edge}
If $G$ is  a very well-covered  graph, then we have $\mbox{\rm v}( I(G)) \le \reg S/I(G)$.
\end{thm}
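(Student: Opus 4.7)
The plan is to combine Theorem~\ref{structure} with the characterization of $\mbox{\rm v}(I(G))$ via the family $\mathcal{A}_G$ used in the proof of Proposition~\ref{line graphs} (from \cite[Theorem 3.5]{JV}), together with the known identity $\reg S/I(G)=\mathrm{im}(G)$ for very well-covered $G$, where $\mathrm{im}(G)$ denotes the induced matching number. The overall strategy is: transfer the problem from $G$ back to a Cohen--Macaulay very well-covered graph $H$, find a small canonical member of $\mathcal{A}_G$ from a maximum induced matching of $H$, and read off the inequality.

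By Theorem~\ref{structure}, I would write $G\cong H(n_1,\ldots,n_{d_0})$ for some Cohen--Macaulay very well-covered graph $H$ on $X_{[d_0]}\cup Y_{[d_0]}$ satisfying $(\ast)$. The first step is to verify that $\mathrm{im}(G)=\mathrm{im}(H)$: inside each block $K_{n_i,n_i}$ of $G$, any two disjoint edges are joined by a further edge, so an induced matching of $G$ can contain at most one edge per block; contracting each block back to the edge $x_iy_i$ then yields a size-preserving bijection with the induced matchings of $H$. Combined with the identity $\reg S/I(G)=\mathrm{im}(G)$ for very well-covered graphs, this identifies the right-hand side $\reg S/I(G)$ with $\mathrm{im}(H)$.

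Next, I would take a maximum induced matching $\{e_1,\ldots,e_s\}$ of $H$ with $s=\mathrm{im}(H)$, choose an endpoint $v_j$ of each $e_j$, and lift to $A=\{v_{j,1}:1\le j\le s\}\subseteq V(G)$, where $v_{j,1}$ denotes the first representative of $v_j$ in the corresponding block of $G$. Independence of $A$ in $G$ is immediate from the induced-matching property: any edge in $G$ joining two vertices of $A$ would descend to an edge of $H$ between two endpoints of different $e_j$'s, contradicting the definition of an induced matching. Once $A\in\mathcal{A}_G$ is established, the characterization of the v-number gives $\mbox{\rm v}(I(G))\le |A|\le s=\reg S/I(G)$.

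The main obstacle is choosing the endpoints $v_j$ so that $N_G(A)$ is actually a vertex cover of $G$; a naive choice can fail (as one sees already for $P_4$ with $d_0=2$ and $\mathrm{im}=1$, where only one of the two candidate endpoints of an induced-matching edge produces a vertex cover). Resolving this will use the fine structure of Cohen--Macaulay very well-covered graphs from \cite{CRT}---in particular the ordering on $X_{[d_0]}\cup Y_{[d_0]}$ under which $\{x_i,y_j\}\in E(H)$ forces $i\le j$, together with the associated transitivity of cross-edges. Under this ordering, selecting $v_j$ in $Y_{[d_0]}$ ensures that $N_G(A)$ contains every $x_{ik}$ needed to cover the block edges of $G$, while the transitivity on cross-edges guarantees that the inter-block edges of $G$ are dominated as well, thereby certifying $A\in\mathcal{A}_G$ and completing the argument.
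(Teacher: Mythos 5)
The reduction to the Cohen--Macaulay case is sound and matches the paper: Theorem \ref{structure} gives $G\cong H(n_1,\ldots,n_{d_0})$, the paper transfers the v-number through the correspondence $\mathcal{A}_G\leftrightarrow\mathcal{A}_H$ (essentially your lifting of block representatives), and the regularity transfers since $\mathrm{im}(G)=\mathrm{im}(H)$ and $\reg S/I = \mathrm{im}$ for very well-covered graphs by \cite{MMCRTY}. The genuine gap is in the Cohen--Macaulay step, at exactly the point you flag as ``the main obstacle,'' and your announced fix goes in the wrong direction. Consider $P_4$ with vertex sequence $a$--$b$--$c$--$d$ and the $(\ast)$-labeling $x_1=b$, $y_1=a$, $x_2=c$, $y_2=d$, which satisfies the order condition of \cite[Lemma 3.5]{CRT}: for the induced matching $\{x_1y_1\}=\{ab\}$, your rule ``select $v_j$ in $Y_{[d_0]}$'' picks $y_1=a$, and $N_H(a)=\{x_1\}$ misses the edge $x_2y_2=cd$ entirely; the endpoint that works is the $X$-endpoint $x_1=b$. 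This failure is structural, not an artifact of labeling: under the order condition one has $N_H(y_j)\subseteq\{x_i: i\le j\}$, so a set $A\subseteq Y_{[d_0]}$ of $Y$-endpoints generally cannot dominate the vertical edges $x_iy_i$ with $i$ large and unmatched. A second problem is starting from an \emph{arbitrary maximum} induced matching: such a matching may contain edges inside $X_{[d_0]}$ or cross edges $x_iy_j$ with $i<j$, for which no ``side'' selection rule is even defined, and you give no argument that \emph{some} choice of endpoints yields a vertex cover.

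The paper's proof avoids both issues. It never takes a maximum induced matching; instead it greedily builds an induced matching out of the perfect-matching edges $e_i=\{x_i,y_i\}$ only, always adding the edge of minimal admissible index, obtaining $\{e_1,e_{i_2},\ldots,e_{i_r}\}$, and then takes all $X$-endpoints $A=\{x_1,x_{i_2},\ldots,x_{i_r}\}$. The greedy minimality is what makes the cover condition provable: every unchosen index $i$ was rejected because some chosen $e_{i_j}$ with $i_j<i$ meets $\{x_i,y_i\}$ by an edge, and the order condition $i\le j$ for $x_iy_j\in E(H)$ rules out the edge $x_iy_{i_j}$, so that edge emanates from $x_{i_j}\in A$; combined with the transitivity of cross-edges in the structure from \cite{CRT}, this yields that $N_H(A)$ is a vertex cover, i.e.\ $A\in\mathcal{A}_H$ by \cite[Lemma 3.4(b)]{JV}, whence ${\rm v}(I(H))\le r$ by \cite[Theorem 3.5]{JV}. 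Maximality is never needed, because $r\le \reg S/I(H)$ already follows from Katzman's bound \cite[Lemma 2.2]{Ka} for \emph{any} induced matching (indeed the greedy $r$ can be strictly smaller than $\mathrm{im}(H)$, as in the $P_4$ example where $r=1$). To repair your outline, replace ``arbitrary maximum induced matching plus $Y$-endpoints'' by this greedy vertical matching with $X$-endpoints; as written, the proposal has a hole at its central step.
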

\begin{proof} 
First we prove the inequality holds for a Cohen--Macaulay  very well-covered graph $H$ with $V(H)=X_0\cup Y_0$  where  $X_0=\{x_1, \dots, x_{h_0}\}$ and  $Y_0=\{y_1, \dots, y_{h_0}\}$ such that $X_0$ is a minimal vertex cover of $H$ and  $Y_0$ is a maximal independent set of $H$ and $e_1=\{ x_1, y_1\}, \dots ,e_{h_0}= \{  x_{h_0},  y_{h_0}\} \in E(H)$. By \cite[Lemma 3.5]{CRT} we may assume that $i\le j$, if $\{x_i, y_j\}\in E(H)$. First, choose $e_{1}$, then choose the minimal $i_2$ such that  $\{e_1, e_{i_2}\}$ is an induced matching in $H$, and next choose the minimal $i_3$ such that  $\{e_1, e_{i_2}, e_{i_3}\}$ is an induced matching in $H$. We repeat this process as much as possible and  finally we assume to get  $\{e_1, e_{i_2}, \dots, e_{i_r}\}$. Then we have $r\le \reg S/I(H)$ by \cite[Lemma 2.2]{Ka}. On the other hand, since $N _H(\{x_1, x_{i_2}, \dots, x_{i_r}\})$ is a vertex cover of $H$ by the condition that $i\le j$ if $\{x_i, y_j\}\in E(H)$, we have $\{x_1, x_{i_2}, \dots, x_{i_r}\} \in \mathcal{A}_H$ by\cite[Lemma 3.4 (b)]{JV}. Therefore, by \cite[Theorem 3.5]{JV}, $\mbox{\rm v}( I(H)) \le r \le \reg S/I(H)$. Next we consider the general case. By Theorem \ref{structure}, a very well-covered graph $G$ can  be expressed as  $G=H(n_1, n_2, \dots , n_{h_0})$  for some $n_1, n_2, \dots , n_{h_0}\ge 1$. Now, since $ A=\{z_{i_1j_1}, \dots, z_{i_pj_p}\}\in \mathcal{A}_{G}$ if and only if $A'=\{z_{i_1}, \dots, z_{i_p}\}\in \mathcal{A}_{H}$, we have $\min \{|A| \,\,: A \in \mathcal{A}_{G}\}=\min \{|A| \,\,:A \in \mathcal{A}_{H}\}$, Hence by \cite[Theorem 3.5]{JV}, we have $$\mbox{\rm v}( I(G))=\mbox{\rm v}( I(H))\le\reg S/I(H)=\reg S/I(G),$$ which completes the proof. 
\end{proof}

Finally, we generalize \cite[Theorem 3.20]{JV}, which treats whisker graphs, to multi-whisker graphs. To this end, we prepare the notation about multi-whisker graphs. For a subset $B$ of $Y_{[h]}$, if $B$ contains $y_{i, j}$, then we replace $y_{i, j}$ with $y_{i, 1}$. Let $B^{1}$ denote the set obtained by applying this operation to any elements of $B$.
\begin{thm}
Let $G=G_0[n_1, n_2, \dots , n_{h}]$ be the multi-whisker graph associated with a graph $G_{0}$. Then we have :
\begin{enumerate}
\item ${\rm v}(I(G))=i(G_{0})\leq i(G)={\rm depth}\hspace{0.05cm}S/I(G)$. 
\item $\mbox{\rm v}( I(G)) \le \reg S/I(G)$.
\end{enumerate}
\end{thm}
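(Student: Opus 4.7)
The main tool is the Villarreal--Jaramillo characterization $\mbox{\rm v}(I(G))=\min\{|A|:A\in\mathcal{A}_G\}$ from \cite[Theorem 3.5]{JV}, together with Katzman's lower bound $\reg S/I(G)\ge\nu(G)$ for the induced matching number $\nu(G)$ \cite{Ka}. The key combinatorial observation used throughout is that, since each $y_{i,j}$ is adjacent only to $x_i$, every maximal independent set of $G$ has the canonical form $M_{B^\ast}:=B^\ast\cup\{y_{i,j}:x_i\notin B^\ast\}$ for a unique independent set $B^\ast\subseteq X_{[h]}$ of $G_0$.

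I first prove the upper bound $\mbox{\rm v}(I(G))\le i(G_0)$ by exhibiting a small element of $\mathcal{A}_G$: take a maximal independent set $A$ of $G_0$ with $|A|=i(G_0)$ and view $A\subseteq V(G)$. Since $A\cup N_{G_0}(A)=X_{[h]}$, the set $V(G)\setminus N_G(A)=A\cup\{y_{i,j}:x_i\notin A\}=M_A$ is a maximal independent set of $G$, so $A\in\mathcal{A}_G$ by \cite[Lemma 3.4(b)]{JV}. For the reverse inequality, take $A\in\mathcal{A}_G$ of minimum size and decompose $A=B\cup C$ with $B=A\cap X_{[h]}$ and $C=A\cap Y$. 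Writing $V(G)\setminus N_G(A)=M_{B^\ast}$ and using $A\subseteq M_{B^\ast}$, a short inspection of the whiskers forces $B^\ast=B$, so $C\subseteq\{y_{i,j}:x_i\notin B\}$. The covering requirement on $N_G(A)$ then demands that each $x_i\in D:=X_{[h]}\setminus(B\cup N_{G_0}(B))$ be the attachment point of some whisker in $C$, whence $|C|\ge|D|$. Extending $B$ to a maximal independent set $B'$ of $G_0$ forces $B'\setminus B\subseteq D$ by independence of $B'$, and therefore
$$|A|=|B|+|C|\ge|B|+|D|\ge|B'|\ge i(G_0).$$

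The middle inequality $i(G_0)\le i(G)$ is immediate from the canonical form, since $|M_{B^\ast}|\ge|B^\ast|+(h-|B^\ast|)=h\ge i(G_0)$. The equality $i(G)=\depth S/I(G)$ combines the sequential Cohen--Macaulayness of multi-whisker graphs from \cite[Theorem 6.1]{MPT} with the standard identity $\depth k[\Delta]=1+\min\{\dim F:F\in\mathcal{F}(\Delta)\}$ for sequentially Cohen--Macaulay complexes, applied to $\Delta=\Delta(G)$. For (2) I reuse the set $A\subseteq X_{[h]}$ of size $i(G_0)$ from the upper-bound construction: the whisker edges $\{\{x_i,y_{i,1}\}:x_i\in A\}$ form an induced matching of $G$, because any extra edge between endpoints is either $\{x_i,x_j\}$ with $x_i,x_j\in A$, forbidden by the independence of $A$ in $G_0$, or involves a whisker, which by construction is adjacent only to its attachment vertex. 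Hence $\nu(G)\ge i(G_0)$, and Katzman's bound yields $\reg S/I(G)\ge\nu(G)\ge i(G_0)=\mbox{\rm v}(I(G))$.

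I expect the main technical obstacle to be the lower-bound step in (1): one must simultaneously pin down $B^\ast=B$, convert the covering condition on $N_G(A)$ into the sharp inequality $|C|\ge|D|$ through the non-dominated set $D$, and then compare against a maximal extension of $B$ inside $G_0$. The $B^1$-canonicalization introduced just before the statement can reduce $C$ to contain at most one whisker per attachment vertex and streamlines this bookkeeping; the remaining inputs---Katzman's regularity bound, sequential Cohen--Macaulayness from \cite{MPT}, and the SCM depth formula---are standard and can be invoked as black boxes.
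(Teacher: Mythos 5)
Your proof is correct, and it diverges from the paper's in the two places that matter. For the lower bound ${\rm v}(I(G))\ge i(G_0)$ in (1), the paper does not argue directly: it invokes the $B^1$-canonicalization and then says the whiskers in $B^1$ can be replaced by the corresponding vertices of $G_0$ ``in the same way as in the proof of \cite[Theorem 3.20]{JV}'', so the combinatorial core is outsourced to Jaramillo--Villarreal's whisker-graph argument. You instead give a self-contained argument: the canonical form $M_{B^\ast}$ of maximal independent sets of $G$, the verification that $B^\ast=B$ (which indeed follows because an $x_i\in B^\ast\setminus B$ would leave the whisker edge $\{x_i,y_{i,1}\}$ uncovered by $N_G(A)$, as $y_{i,1}\in N_G(A)$ forces $x_i\in B$), the count $|C|\ge |D|$, and the extension of $B$ to a maximal independent set of $G_0$ --- all of which I checked and which close the bound without the $B^1$ device or any appeal to the internals of \cite{JV}. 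For (2) the divergence is sharper: the paper transports both sides to the $1$-whisker graph, using ${\rm v}(I(G))=i(G_0)={\rm v}(I(G_0[1,\ldots,1]))$ together with the regularity \emph{equalities} $\reg S/I(G)={\rm im}(G)={\rm im}(G_0[1,\ldots,1])$ from \cite[Theorem 1.3]{MMCRTY} and \cite[Corollary 4.4]{MPT}, and then cites \cite[Theorem 3.20]{JV}; you exhibit an explicit induced matching $\{\{x_i,y_{i,1}\}: x_i\in A\}$ of size $i(G_0)$ and apply only Katzman's one-sided bound $\reg S/I(G)\ge \nu(G)$, which is both shorter and needs strictly weaker inputs. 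The one place where the paper's citation is cheaper is the equality $i(G)=\depth S/I(G)$: the paper quotes \cite[Corollary 4.3]{MPT} directly, whereas you derive it from sequential Cohen--Macaulayness \cite[Theorem 6.1]{MPT} plus the standard identity $\depth k[\Delta]=1+\min\{\dim F : F\in\mathcal{F}(\Delta)\}$ for sequentially Cohen--Macaulay complexes; that identity is indeed standard (it follows from the filtration with Cohen--Macaulay quotients), so this is a legitimate black box, just make the reference explicit rather than calling it ``standard''. Net assessment: your route buys self-containedness and weaker hypotheses (no regularity-equality machinery), while the paper's buys brevity by leaning on \cite{JV} and \cite{MPT}.
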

\begin{proof}
(1)${\rm v}(I(G))\leq i(G_{0})$ follows from the proof in  {\cite[Theorem 3.20]{JV}}. Hence it suffices to show that ${\rm v}(I(G))\geq i(G_{0})$. Let $A, B$ be independent sets of $G$ with $A\subset X_{[h]}$, $B\subset Y_{[h]}$ and $N_{G}(A\cup B)$ is a minimal vertex cover of $G$. 
Then, $B^{1}$ is an independent set of $G$, $N_{G}(A\cup B)=N_{G}(A\cup B^{1})$ and ${\rm deg}(x_{A}y_{B})\geq{\rm deg}(x_{A}y_{B^{1}})$. Therefore we can consider the case with respect to $B^{1}$. By replacing the whisker in $B^{1}$ with the corresponding vertex in $G_{0}$ in the same way as in the proof {\cite[Theorem 3.20]{JV}}, we obtain the first equality. Also, from \cite[Corollary 4.3]{MPT}, the assertion follows.    

(2) By (1), we have ${\rm v}(I(G))=i(G_{0})={\rm v}(I(G_0[1,1,\dots, 1]))$.
By \cite[Theorem 1.3]{MMCRTY} and \cite[Corollary 4.4]{MPT} we know that $\reg S/I(G)=\mbox{im}(G)=\mbox{im}(G_0[1,1,\dots, 1])=\reg S/I(G_0[1,1,\dots, 1])$. 
Hence we have the desired result by  \cite[Theorem 3.20]{JV}.
\end{proof}

\begin{acknowledgement}
We are grateful to the anonymous referee for their careful reading and valuable comments. The research of Yuji Muta was partially supported by ohmoto-ikueikai and JST SPRING Japan Grant Number JPMJSP2126. The research of Naoki Terai was partially supported by JSPS Grant-in Aid for Scientific Research (C) 24K06670.
\end{acknowledgement}
 
\bibliographystyle{amsplain}

\end{document}